\def\@biblabel#1{#1.}
\newcommand{\ld}{\text{\tiny\ensuremath{\bullet}}}
\newcommand{\dDl}{\Dl^{\ld}}
\newcommand{\Uf}{\mathfrak{U}}
\newcommand{\nn}[1]{\n{#1}_{0}}
\newcommand{\les}{\lesssim}
\title{New estimates of the nonlinear Fourier transform for the defocusing \nls equation}
\author{Jan-Cornelius Molnar}
\date{March 5, 2014}
\begin{document}

\maketitle

\begin{abstract}
The defocusing NLS-equation $\mathrm{i} u_t = -u_{xx} + 2|u|^2u$ on the circle admits a global nonlinear Fourier transform, also known as Birkhoff map, linearising the NLS-flow. The regularity properties of $u$ are known to be closely related to the decay properties of the corresponding nonlinear Fourier coefficients. In this paper we quantify this relationship by providing two sided polynomial estimates of all integer Sobolev norms $\|u\|_m$, $m\ge 0$, in terms of the weighted norms of the nonlinear Fourier transformed.
\end{abstract}

\section{Introduction}
\label{s:intro}

Consider the \emph{defocusing nonlinear Schrödinger equation}
\[
  \ii\partial_tu = -\partial_x^2u + 2\abs{u}^2u,\qquad x\in\T,
\]
on the circle $\T = \R/\Z$ with $u$ taken from $L^2 \defl L^2(\T,\C)$. As is well known, the \nls-equation can be written as an \emph{infinite dimensional Hamiltonian system}. We introduce the phase space $L^2_c\defl L^2\times L^2$ with elements $\ph=(\phm,\php)$ and define the inner product
\[
  \lin{\ph,\psi} \defl \int_{\T} \ph_{+}\ob{\psi_{+}} + \ph_{-}\ob{\psi_{-}}\,\dx,
\]
which makes $L^{2}_{c}$ a Hilbert space. The associated norm is denoted by $\n{\ph}_{0}$. Further, we endow this space with the Poisson structure given by
\[
  \{F,G\} 
  \defl 
  -\ii\int_{\T} (\partial_{\phm} F\, \partial_{\php} G 
    - \partial_{\php}F\,\partial_{\phm} G)\,\dx.
\]
Here $\partial_{\phm} F$ and $\partial_{\php} F$ denote the components of the $L^2$-gradient $\partial_{\ph} F$ of a $C^1$-functional $F$. The Hamiltonian system with Hamiltonian
\[
  \Hc(\phm,\php) \defl \int_{\T} (\partial_{x}\phm\partial_{x}\php + \phm^{2}\php^{2})\,\dx
\]
is then given by
\[
  \ii\partial_{t} (\phm,\php) = (\partial_{\php}\Hc,-\partial_{\phm}\Hc),
\]
and the defocusing \nls is obtained by restriction to the invariant subspace of real type states
\[
  L^2_r
  \defl
  \setdef{\ph \in L_c^2}{\ph^* = \ph},\qquad \ph^* = (\ob\php,\ob\phm).
\]
Indeed, with $\ph = (u,\ob{u})$ the defocusing \nls-equation becomes
\[
  \ii\partial_{t}u = \ii\setd{u,\Hc} = \partial_{\ob{u}}\Hc(u,\ob{u}).
\]

After the \kdv-equation, the \nls-equation was the second evolution equation known to be \emph{completely integrable} by the inverse scattering method~\cite{Zakharov:1975ft}. In fact, according to \cite{Grebert:2014iq} (cf. also \cite{Battig:1995jr,Battig:1993vu,McKean:1997ka}), the defocusing \nls-equation is integrable in the strongest possible sense meaning that it admits global \emph{Birkhoff coordinates} $(x_{n},y_{n})_{n\in\Z}$.

To state our main result, let $\sum_{n\in\Z} (\ph_{2n}^{-}\e^{-\ii 2n\pi x},\ph_{2n}^{+}\e^{\ii 2n\pi x})$ denote the Fourier series of $\ph=(\phm,\php)\in L_{c}^{2}$ and introduce for any $s\ge 0$ the Sobolev norm $\n{\ph}_{s}$ given by
\[
  \n{\ph}_{s}^{2} 
  	\defl \sum_{n\in\Z} \lin{2n\pi}^{2s}(\abs{\ph_{2n}^{-}}^{2}+\abs{\ph_{2n}^{+}}^{2}),
  		  \qquad \lin{x} \defl 1+\abs{x}.
\]
The space of all $\ph\in L_{c}^{2}$ with $\n{\ph}_{s} < \infty$ is denoted $H^{s}_{c}$, and $H_{r}^{s} \defl L^{2}_{r}\cap H_{c}^{s}$. Furthermore, let us introduce the model space
\[
  h_{r}^{s} \defl
   \setdef{(x,y)=(x_{n},y_{n})_{n\in\Z}}
          {\n{(x,y)}_{s} \defl (\n{x}_{s}^{2}+\n{y}_{s}^{2})^{1/2} < \infty},
\]
where the norm $\n{x}_{s}$ is defined by
\[
  \n{x}_{s}^{2} \defl \sum_{n\in\Z} \lin{2n\pi}^{2s}\abs{x_{n}}^{2}.
\]
This space is endowed with the canonical Poisson structure $\{x_{n},y_{m}\} = -\dl_{n,m}$ while all other brackets vanish.

The \emph{Birkhoff map} $\ph\mapsto (x_{n},y_{n})_{n\in\Z}$ is a bi-analytic, canonical diffeomorphism $\Om\colon H^{0}_{r}\to h^{0}_{r}$, whose restriction $\Om\colon H^{m}_{r}\to h_{r}^{m}$ is again bi-analytic for any integer $m\ge 1$. On $h^{1}_{r}$ the transformed \nls Hamiltonian $\Hc\circ\Om^{-1}$ is a real analytic function of the actions $I_{n} = (x_{n}^{2}+y_{n}^{2})/2$ alone and the \nls evolution takes the particularly simple form
\[
  \dot x_{n} = -\om_{n} y_{n},\quad
  \dot y_{n} =  \om_{n} x_{n},\qquad
  \om_{n} \defl \partial_{I_{n}} \Hc.
\]
One may thus think of $\Om$ as a nonlinear Fourier transform for the defocusing \nls-equation.
Remarkably, the derivative $\ddd_{0}\Om$ of $\Om$ at the origin is the Fourier transform and on $L_{r}^{2}$, as for the Fourier transform,
\[
  \n{\Om(\ph)}_{0} = \n{\ph}_{0},
\]
which we also refer to as Parseval's identity -- cf. e.g. \cite{McKean:1997ka,Grebert:2014iq}.
Our main result says that for higher order Sobolev norms the following version of Parseval's identity holds for the nonlinear map $\Om$.

\begin{theorem}
\label{b-est}
For any integer $m\ge 1$ there exist absolute constants $c_{m}$, $d_{m} > 0$ such that
the restriction of $\Om$ to $H_{r}^{m}$ satisfies the two sided estimates
\[
  \emph{ (i)}\quad
  \n{\Om(\ph)}_{m} \le c_{m}\bigl(
  	\n{\ph}_{m} + (1+\n{\ph}_{1})^{2m}\n{\ph}_{0}	\bigr),
\]
\[
  \emph{(ii)}\quad 
  \n{\ph}_m \le d_{m}\bigl(
  	\n{\Om(\ph)}_{m} + (1+\n{\Om(\ph)}_{1})^{4m-3}\n{\Om(\ph)}_{0} \bigr).
\]
\end{theorem}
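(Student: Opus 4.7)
I would prove Theorem~\ref{b-est} by induction on $m\ge 1$, with the base case $m=0$ being Parseval's identity $\n{\Om(\ph)}_{0}=\n{\ph}_{0}$ recalled above.

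The central analytical input is a refined asymptotic relation between the Birkhoff coordinates and the Fourier coefficients of the potential, obtained from the spectral construction of $\Om$ via the Zakharov--Shabat operator $L(\ph)$, whose periodic eigenvalues are symplectically equivalent to the actions $I_n=(x_n^2+y_n^2)/2$. Writing $z_n\defl x_n+\ii y_n$ and using the contour-integral representation of $z_n$ around the $n$-th spectral gap, I would extract the leading order as
\[
  z_n(\ph) = \ph^{-}_{2n} + r_n(\ph),\qquad n\in\Z,
\]
with a remainder $r_n$ whose Taylor expansion in $\ph$ starts at cubic order. Successive integrations by parts in $x$ inside the contour integral sharpen this: each integration trades one factor of $\lin{2\pi n}$ on the left for one factor $\partial_x\ph$ appearing in the integrand, at the cost of a polynomial factor in $\n{\ph}_1$ arising from resolvent estimates for $L(\ph)$.

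For part (i) I would split
\[
  \n{\Om(\ph)}_m^2 \le
  2\sum_{n\in\Z}\lin{2\pi n}^{2m}\bigl(\abs{\ph^{-}_{2n}}^2+\abs{\ph^{+}_{2n}}^2\bigr)
  + 2\sum_{n\in\Z}\lin{2\pi n}^{2m}\abs{r_n(\ph)}^2.
\]
The first sum equals $2\n{\ph}_m^2$ by definition; the second I would estimate by $C_m(1+\n{\ph}_1)^{4m}\n{\ph}_0^2$ through $m$-fold integration by parts, the exponent $4m$ accumulating one $\n{\ph}_1^2$-factor per iteration. Taking square roots yields the stated inequality. For part (ii) I would run the analogous argument on $\Om^{-1}$, starting from an inverse expansion $\ph^{-}_{2n}=z_n+\tilde r_n(z)$ obtained by solving the inverse spectral problem. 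The larger exponent $4m-3$ reflects that recovering $\ph$ from its Birkhoff coordinates is more nonlinear than the forward map: besides the actions $I_n$, one must also control the angle variables, each of which contributes additional polynomial factors of $\n{\Om(\ph)}_1$ at every inductive step.

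The main obstacle is the \emph{tameness} of the estimates, i.e.\ the linear dependence on the top-order norm $\n{\ph}_m$ (respectively $\n{\Om(\ph)}_m$) on the right-hand side. A direct Taylor expansion of the bi-analytic map $\Om$ around the origin combined with Cauchy-type estimates would yield only non-tame polynomial bounds in the top norm. The spectral-theoretic decomposition above is essential precisely because integration by parts in $x$ inside the $L(\ph)$-resolvent produces $\partial_x\ph$ under the integral, cleanly separating the $\n{\ph}_m$-contribution from the lower-order polynomial remainder in $\n{\ph}_0$ and $\n{\ph}_1$; arranging the iteration so that all intermediate Sobolev norms $\n{\ph}_k$ with $0<k<m$ either cancel or are absorbed (by interpolation) into the two extremal ones is the delicate bookkeeping step that must be carried out with care.
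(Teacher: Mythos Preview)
Your approach is genuinely different from the paper's, and as written contains gaps that would be hard to close.

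The paper never works with the individual Birkhoff coordinates $z_n = x_n + \ii y_n$; it reduces Theorem~\ref{b-est} immediately to the action estimates of Theorem~\ref{act-sob-est} via the identity $\n{\Om(\ph)}_m^2 = 2\n{I(\ph)}_{\ell^1_{2m}}$, so the angle variables never enter and no induction on $m$ is used. The action estimates in turn are obtained not by expanding $I_n$ around Fourier coefficients but through the \emph{trace formulas} $\sum_n J_{n,2m+1} = (-1)^{m+1}4^{-m}\Hc_{2m+1}$ for the higher-level actions $J_{n,k}$, combined with the mean-value identity $J_{n,2m+1} = \zt_{n,m}^{2m} I_n$ for some $\zt_{n,m}\in[\lm_n^-,\lm_n^+]$ and a spectral localization (Theorem~\ref{ev-trap}) showing $\abs{\lm_n^\pm - n\pi}$ is small once $\lin{n}\ge 8\n{\ph}_1^2$. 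The exponent $2m$ in (i) arises because this threshold is quadratic in $\n{\ph}_1$, not from iterated integration by parts; the exponent $4m-3$ in (ii) comes from a separate argument (Lemmas~\ref{H3-est} and~\ref{h-a-est}) bounding $\n{\ph}_1^2$ by $\n{I}_{\ell^1_2} + \n{I}_{\ell^1}^2$ and handling the low-$\abs{n}$ tail of $\sum J_{n,2m+1}$ via $J_{n,2m+1} = \tilde\zt^{\,2m-2} J_{n,3}$.

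The gap in your sketch is the mechanism itself. The step ``integration by parts in $x$ inside the $L(\ph)$-resolvent trades one factor of $\lin{2\pi n}$ for one $\partial_x\ph$ at the cost of a factor $(1+\n{\ph}_1)^2$'' is asserted rather than derived; Zakharov--Shabat contour integrals do not integrate by parts in $x$ in any obvious way, and the resolvent estimates that actually produce such gains (of the Lyapunov--Schmidt type in Section~\ref{s:trap-sp}) yield bounds on gap lengths and eigenvalue locations, not directly on the $z_n$. For part (ii), writing $\ph_{2n}^- = z_n + \tilde r_n(z)$ with tame remainder bounds amounts to solving the inverse spectral problem quantitatively, including control of the angles; you give no independent argument for this, and it is strictly harder than the forward direction. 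The paper avoids the inverse problem entirely by running the trace formula backwards, $\tfrac{1}{2}\n{\ph_{(m)}}_0^2 = 4^m\sum_n J_{n,2m+1} - \int_\T p_{2m}\,\dx$, and then bounding the right-hand side purely in terms of weighted $\ell^1$-norms of the actions using Gagliardo--Nirenberg interpolation on $p_{2m}$.
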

The main feature of Theorem~\ref{b-est} is that the constants $c_{m}$ and $b_{m}$ can be chosen independently of $\ph$.

Note that the estimate (i) is linear in the highest Sobolev norm $\n{\ph}_{m}$ for $m\ge 2$, and that the estimate (ii) is linear in the highest weighted $h_{r}^{m}$-norm $\n{\Om(\ph)}_{m}$ of $\Om(\ph)$. Hence Theorem~\ref{b-est} shows that the 1-smoothing property of the Birkhoff map $\Om$ established in \cite{Kappeler:4WN-jiH9} holds in a uniform fashion.

The proof of Theorem~\ref{b-est} relies on estimates of the action variables $I(\ph) = (I_n)_{n\in\Z}$ where $I_{n} = (x_{n}^{2} + y_{n}^{2})/2$, $n\in\Z$. The decay properties of the actions $I_n$ are known to be closely related to the regularity properties of $\ph$ -- c.f. \cite{Kappeler:2009uk,Djakov:2006ba,Kappeler:2009kp}. We need to quantify this relationship by providing two sided estimates of the Sobolev norms of $\ph$ in terms of weighted $\ell^{1}$-norms of $I(\ph)$. For that purpose introduce the weighted sequence space $\ell^{1}_{s}$ whose norm is defined by
\[
  \n{I(\ph)}_{\ell^{1}_{s}} \defl \sum_{n\in\Z} \lin{2n\pi}^{s} \abs{I_n(\ph)}.
\]

\begin{theorem}
\label{act-sob-est}
For any integer $m\ge 1$, there exist absolute constants $c_m$ and $d_{m}$, such that for all $\ph\in H_{r}^{m}$
\[
  \emph{ (i)}\quad\n{I(\ph)}_{\ell^{1}_{2m}}
   \le 
  c_{m}^{2}\bigl(\n{\ph}_{m}^{2}
  +
  (1+\n{\ph}_{1})^{4m}\n{\ph}_{0}^{2}\bigr),
\]
\[
  \emph{(ii)}\quad\n{\ph}_m^{2} \le d_{m}^{2}\bigl(\n{I(\ph)}_{\ell^{1}_{2m}}
   + (1+\n{I(\ph)}_{\ell^{1}_{2}})^{4m-3}\n{I(\ph)}_{\ell^{1}}\bigr).
\]
\end{theorem}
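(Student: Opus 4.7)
The natural setting for the proof is the integrable hierarchy of the defocusing \nls-equation. Let $L(\ph)$ denote the Zakharov-Shabat operator appearing in the Lax pair of \nls and let $\Delta(\lambda,\ph)$ be its discriminant; the asymptotic expansion of $\Delta$ at $\lambda=\infty$ yields a sequence of conserved quantities $H_{k}(\ph)$, $k\ge 0$, that are simultaneously polynomial in $\ph$ and its $x$-derivatives and, in Birkhoff coordinates, functions of the actions $(I_{n})_{n\in\Z}$ alone. The key structural identity I plan to exploit is that, for each integer $m\ge 1$,
\[
  H_{2m}(\ph) = \n{\ph}_{m}^{2} + R_{m}(\ph) = \sum_{n\in\Z}(2n\pi)^{2m}I_{n}(\ph) + \tilde R_{m}(\ph),
\]
where the remainders $R_{m}$ and $\tilde R_{m}$ are polynomial expressions of strictly lower order in $\ph$ and in the actions, respectively.

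To establish~(i), I would combine the above identity with an explicit analysis of $R_{m}(\ph)$: by Hölder's inequality and the Sobolev embedding $H^{1}(\T)\hookrightarrow L^{\infty}(\T)$, every summand of $R_{m}(\ph)$ can be controlled by $(1+\n{\ph}_{1})^{2k}\n{\ph}_{0}^{2}$ with $2k\le 4m$. A parallel bound for $\tilde R_{m}(\ph)$ follows from a priori estimates on the gap lengths of $L(\ph)$, which in particular give $I_{n}(\ph)\les\lin{2n\pi}^{-2}\n{\ph}_{1}^{2}$. Summing against the weight $\lin{2n\pi}^{2m}$ then produces estimate~(i).

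For~(ii), I would argue by induction on $m$. The base case $m=1$ follows directly from the fact that $H_{2}=\Hc$ has quadratic leading term $\int\partial_{x}\phm\partial_{x}\php\,\dx$ and a quartic correction that is readily controlled via Sobolev embeddings, combined with Parseval's identity $\n{\ph}_{0}=\n{\Om(\ph)}_{0}$. For the inductive step, the structural identity above gives
\[
  \n{\ph}_{m}^{2} \le \sum_{n\in\Z}(2n\pi)^{2m}I_{n}(\ph) + \abs{\tilde R_{m}(\ph)} + \abs{R_{m}(\ph)},
\]
where the remainders are polynomial in the lower-order norms $\n{\ph}_{k}$ with $k<m$. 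Invoking the induction hypothesis to convert each such norm into an expression in the actions introduces at most one further factor of $(1+\n{I(\ph)}_{\ell^{1}_{2}})$ per iteration; accumulating these over $m$ recursive steps reproduces the exponent $4m-3$.

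The main obstacle is the algebraic bookkeeping in the remainders $R_{m}$ and $\tilde R_{m}$: one must verify that no top-order factor of $\n{\ph}_{m}$ (as opposed to $\n{\ph}_{m-1}$) survives in $R_{m}$. This requires the full strength of the integration-by-parts identities intrinsic to the \nls hierarchy, and reflects the $1$-smoothing property of $\Om$ established in \cite{Kappeler:4WN-jiH9}. Once this algebraic fact is secured, the remaining estimates reduce to standard Sobolev calculus and a bookkeeping induction.
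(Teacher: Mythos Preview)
Your overall strategy---compare the Sobolev norm $\n{\ph}_{m}$ with the higher Hamiltonian $\Hc_{2m+1}$ via the trace formula and control the remainder as lower order---is the right one, and it is also the paper's. But the proposal has two genuine gaps that the paper closes with specific tools you do not mention.

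\emph{First gap: the comparison $\sum (2n\pi)^{2m}I_{n}\sim \Hc_{2m+1}$.} The trace formula gives $\sum_{n} J_{n,2m+1}=c_{m}\Hc_{2m+1}$ with $J_{n,2m+1}=\zt_{n,m}^{2m}I_{n}$ and $\zt_{n,m}\in[\lm_{n}^{-},\lm_{n}^{+}]$; it does \emph{not} give an identity of the form $\Hc_{2m+1}=\sum(2n\pi)^{2m}I_{n}+\tilde R_{m}$ with $\tilde R_{m}$ a polynomial in the actions. To pass from $\zt_{n,m}^{2m}$ to $(n\pi)^{2m}$ you need a localisation of the periodic eigenvalues, and the crucial point is that the threshold in $|n|$ above which $\lm_{n}^{\pm}$ is close to $n\pi$ must depend only \emph{quadratically} on $\n{\ph}_{1}$ (Theorem~\ref{ev-trap}). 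Gap-length estimates alone do not give this: they control $\lm_{n}^{+}-\lm_{n}^{-}$, not $\lm_{n}^{\pm}-n\pi$. Moreover, your proposed bound $I_{n}\les\lin{2n\pi}^{-2}\n{\ph}_{1}^{2}$, even if it held for all $n$, would yield $\sum\lin{2n\pi}^{2m}I_{n}\les\n{\ph}_{1}^{2}\sum\lin{2n\pi}^{2m-2}=\infty$ for $m\ge 2$, so the summation step as written fails. The paper instead splits at $N\sim\n{\ph}_{1}^{2}$: for $|n|>N$ the eigenvalue localisation gives $4^{m}J_{n,2m+1}\asymp\lin{2n\pi}^{2m}I_{n}$; for $|n|\le N$ one uses the crude bound $\lin{2n\pi}^{2m}\le\lin{2N\pi}^{2m}$ together with $\sum I_{n}=\tfrac12\n{\ph}_{0}^{2}$, which is exactly what produces the factor $(1+\n{\ph}_{1})^{4m}$.

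\emph{Second gap: avoiding induction in (ii).} Your inductive scheme would bound $R_{m}$ by powers of $\n{\ph}_{m-1}$ and then invoke the step-$(m-1)$ estimate; iterating this typically inflates the exponent far beyond $4m-3$. The paper avoids induction altogether: the polynomial $p_{2m}$ in $\Hc_{2m+1}$ is controlled directly by Gagliardo--Nirenberg interpolation between $\n{\ph}_{0}$ and $\n{\ph}_{m}$ followed by Young's inequality, so that for any $\ep>0$ one has $\int|p_{2m}|\le\ep\n{\partial_{x}^{m}\psi}_{0}^{2}+C_{\ep,m}\n{\ph}_{0}^{4m+2}$. The $\ep$-term is absorbed into the left-hand side of~\eqref{sob-rep}, and the remaining small-$|n|$ contribution $\sum_{|n|\le N}4^{m}J_{n,2m+1}$ is handled by writing $J_{n,2m+1}=\tilde\zt^{\,2m-2}J_{n,3}$ and invoking the $m=1$ case (Lemma~\ref{H3-est}), which is proved by a separate contour-integral argument with $F^{3}$. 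This is what yields the clean exponent $4m-3$.
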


\emph{Remark.} The same constants $c_{m}$, $d_{m}$ of Theorem~\ref{b-est} can be used in Theorem~\ref{act-sob-est}.

It turns out that versions of the estimates (i) of Theorems~\ref{b-est} \& \ref{act-sob-est} hold for a larger family of spaces referred to \emph{weighted Sobolev spaces} -- see \cite{Kappeler:1999er,Kappeler:2001hsa} for an introduction. A \emph{normalised, submultiplicative}, and \emph{monotone weight} is a symmetric function $w\colon\Z\to\R$  with
\[
  w_n \ge 1,\qquad w_{n} = w_{-n},\qquad w_{n+m}\le w_{n}w_{m},\qquad
  w_{\abs{n}}\le w_{\abs{n}+1},
\]
for all $n,m\in\Z$. The class of all such weights is denoted by $\Mc$ and $H_{c}^w$ is the space of $L^{2}_{c}$ functions $\ph$ with finite $w$-norm
\[
  \n{\ph}_w^2 \defl \sum_{n\in\Z} w_{2n}^2 (\abs{\ph_{2n}^{-}}^{2}+\abs{\ph_{2n}^{+}}^{2}).
\]
Furthermore, $h_{r}^{w}$ denotes the subspace of $\ell^{2}_{r}$ where 
$\n{x}_{w}^{2} + \n{y}_{w}^{2} < \infty$,
\[
  \n{x}_{w}^{2} \defl \sum_{n\in\Z} w_{2n}^{2} \abs{x_{n}}^{2}.
\]

For any $s \ge 0$, the \emph{Sobolev weight} $\lin{n\pi}^{s}$ gives rise to the usual Sobolev space $H^{s}_{c}$. For $s\ge 0$ and $a > 0$, the \emph{Abel weight} $\lin{n\pi}^s \e^{a\abs{n}}$ gives rise to the space $H^{s,a}_{c}$ of $L^2_{c}$-functions, which can be analytically extended to the open strip $\setdef{z}{\abs{\Im z} < a/2\pi}$ of the complex plane with traces in $H_{c}^s$ on the boundary lines. In between are, among others, the \emph{Gevrey weights}
\[
  \lin{n}^s \e^{a\abs{n}^\sigma},\qquad 0< \sigma < 1,\quad s\ge 0,\quad a > 0,
\]
which give rise to the Gevrey spaces $H^{s,a,\sigma}_{c}$, as well as weights of the form
\[
  \lin{n}^s\exp\biggl(\frac{a\abs{n}}{1+\log^{\sg}\lin{n}} \biggr),
    \qquad 0< \sigma < 1,\quad s\ge 0,\quad a > 0,
\]
that are lighter than Abel weights but heavier than Gevrey weights.

To avoid certain technicalities in our estimates, we restrict ourselves to weights incorporating a factor which grows at a linear rate. We thus introduce the subclass
\[
  \Mc^{1} \defl 
    \setdef*{w\in\Mc}
            {w_{n}=\lin{n}v_{n}\; \text{ for all }n\in\Z\text{ with some }v\in\Mc}.
\]
Finally, we assume all weights $w\in\Mc$ to be piecewise linearly extended to functions on the real line $w\colon \R\to\R_{>0}$, $t\mapsto w[t]$.

\begin{theorem}
\label{act-west}
For any weight $w\in\Mc^{1}$ there exists a complex neighbourhood $W_{w}$ of $H^{w}_{r}$ within $H^{w}_{c}$ and a constant $c_{w}$, such that
\[
  \emph{ (i)}\quad\sum_{n\in\Z} w_{2n}^{2}\abs{I_{n}}
   \le
  c_{w}^{2}w[16\n{\ph}_{w}^{2}]^{2}\n{\ph}_{w}^{2}.
\]
Moreover, the restriction of the Birkhoff map $\Om$ to $H^{w}_{r}$ takes values in $h_{r}^{w}$ and satisfies
\[
  \emph{ (ii)}\quad
  \n{\Om(\ph)}_{w} \le c_{w}w[16\n{\ph}_{w}^{2}]\n{\ph}_{w}.
\]
\end{theorem}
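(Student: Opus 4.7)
The plan is to observe first that (ii) is an immediate consequence of (i). Since $(x_n,y_n)=\Om(\ph)$ and $I_n=(x_n^2+y_n^2)/2$ on $H_r^w$,
\[
  \n{\Om(\ph)}_w^2 \;=\; \sum_{n\in\Z} w_{2n}^2(x_n^2+y_n^2) \;=\; 2\sum_{n\in\Z} w_{2n}^2 I_n,
\]
so taking the square root of (i) yields (ii), with an absolute factor of $\sqrt2$ absorbed into $c_w$. The real work is therefore in (i), and for this I would follow the spectral approach via the Zakharov--Shabat operator $L(\ph)$, in the spirit of \cite{Grebert:2014iq,McKean:1997ka} and of its weighted-space variant in \cite{Kappeler:1999er,Kappeler:2001hsa}.

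The starting point is the contour integral representation
\[
  I_n \;=\; \frac{1}{\pi}\oint_{\Gamma_n} \lambda\,
    \frac{\partial_\lambda\Delta(\lambda,\ph)}{\sqrt{\Delta^2(\lambda,\ph)-4}}\,
    \mathrm{d}\lambda,
\]
where $\Delta(\cdot,\ph)$ is the discriminant of $L(\ph)$ and $\Gamma_n\subset\C$ is a small positively oriented contour encircling only the $n$-th pair of periodic eigenvalues. Since $(\lambda,\ph)\mapsto\Delta(\lambda,\ph)$ is jointly analytic on $\C\times L_c^2$, the construction extends analytically from $H_r^w$ to a complex neighbourhood $W_w\subset H_c^w$ as soon as mutually disjoint isolating neighbourhoods $U_n\supset\Gamma_n$ can be chosen that are stable under small complex perturbations of the potential. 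This uniform choice of the $U_n$ is the heart of the construction.

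To estimate $\abs{I_n}$ I would introduce a threshold $N\sim\n{\ph}_w^2$ separating the perturbative tail from the spectral bulk. For $\abs{n}>N$, classical perturbation of the free operator places $\Gamma_n$ in a disc of radius $O(1/\abs{n})$ about $2n\pi$ and yields a pointwise bound $\abs{I_n}\les\abs{\ph_{2n}^-}^2+\abs{\ph_{2n}^+}^2$ with an absolute implicit constant; for $\abs{n}\le N$ all remaining spectrum fits into a single isolating region of diameter $\sim N$ on which the crude bound $\abs{I_n}\les\n{\ph}_0^2$ suffices. Combining the two ranges with the monotonicity $w_{2n}\le w[2N]\le w[16\n{\ph}_w^2]$ for $\abs{n}\le N$ and using $w_{2n}\ge 1$ together with $\n{\ph}_0\le\n{\ph}_w$, one obtains
\[
  \sum_{n\in\Z} w_{2n}^2\abs{I_n}
  \;\les\; w[16\n{\ph}_w^2]^2\n{\ph}_0^2 + \n{\ph}_w^2
  \;\le\; c_w^2\, w[16\n{\ph}_w^2]^2\, \n{\ph}_w^2,
\]
which is exactly (i); the numerical constant $16$ is the explicit threshold at which the perturbative analysis of $\Delta^2-4$ takes over.

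The hard part will be the truly uniform construction of the $U_n$ and the attendant bounds on $\Delta$, $\partial_\lambda\Delta$, and $\sqrt{\Delta^2-4}$ over the whole complex neighbourhood $W_w$: one must verify that the perturbative regime sets in at a scale $\abs{n}\gtrsim\n{\ph}_w^2$ with an absolute constant and that $\sqrt{\Delta^2-4}$ never degenerates on the chosen contours. The restriction to $\Mc^1$ is, as the authors indicate, a technical convenience: the extra linear factor $\lin{n}$ built into every admissible weight absorbs polynomial losses from the residue calculus, so that the tail sum $\sum_{\abs{n}>N}w_{2n}^2(\abs{\ph_{2n}^-}^2+\abs{\ph_{2n}^+}^2)$ closes against $\n{\ph}_w^2$ without requiring a sharper perturbative expansion.
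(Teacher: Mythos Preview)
Your overall architecture---the reduction of (ii) to (i), the splitting at a threshold $N\sim\n{\ph}_w^2$, and the bulk estimate for $\abs{n}\le N$ via the trace formula $\sum_n I_n=\tfrac12\n{\ph}_0^2$ together with monotonicity of $w$---matches the paper. The gap is in your tail estimate.

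You assert that for $\abs{n}>N$ one gets $\abs{I_n}\les\abs{\ph_{2n}^-}^2+\abs{\ph_{2n}^+}^2$ with an \emph{absolute} implicit constant. This does not fall out of ``classical perturbation'' of the contour integral: knowing that $\Gm_n$ sits in a small disc about $n\pi$ (not $2n\pi$) does not by itself control the size of the integrand, because $\dDl/\sqrt[c]{\Dl^2-4}$ involves the whole spectrum through an infinite product. The paper does not obtain such a pointwise bound with absolute constant, and the constant it does obtain depends on $\n{\ph}$. Concretely, the tail estimate is factored through the gap lengths in two nontrivial steps. First, writing $I_n$ as an integral over $[\lm_n^-,\lm_n^+]$ and extracting $\gm_n^2$ leaves the infinite product $\chi_n(\lm)=\prod_{m\neq n}(\lm_m^\ld-\lm)/\vs_m(\lm)$; a careful splitting of this product (near and far factors, with a telescoping argument for the near ones) yields $\abs{I_n}\le C(1+\n{\ph}_1^2)\abs{\gm_n}^2$ for $\abs{n}\ge 8\n{\ph}_1^2$. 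Second, the gap lengths are estimated via the Lyapunov--Schmidt reduction of Section~\ref{s:trap-sp}: one has $\abs{\gm_n}^2\le 6\abs{b_n^+b_n^-}$ with $b_n^\pm=\ph_{2n}^\pm+O(\n{\ph}_w^3/\lin{n}w_{2n})$, giving $\sum_{\abs{n}>N} w_{2n}^2\abs{\gm_n}^2\les\n{R_N\ph}_w^2+\n{\ph}_w^6/\lin{N}$. Both steps produce polynomial prefactors in $\n{\ph}_w$, not absolute constants; these are absorbed into $w[16\n{\ph}_w^2]^2$ precisely because $w\in\Mc^1$ grows at least linearly. Your sketch names none of these intermediate objects, and the pointwise bound you state as the output of the perturbative analysis is stronger than what is actually available.

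A smaller point: your bulk estimate $\abs{I_n}\les\n{\ph}_0^2$ relies on $I_n\ge 0$ and the trace formula, which hold only for real-type $\ph$. The paper first proves (i) on $H_r^w$, then extends to a complex neighbourhood by continuity of $\ph\mapsto\sum_n w_{2n}^2\abs{I_n}$ (doubling the constant), with the observation that near $\ph=0$ one may take $N=0$ and avoid the bulk entirely. Your sketch treats the complex extension as automatic from analyticity of the contour integral, but that alone does not give the bulk bound.
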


In this more general set up the bounds in (i) and (ii) of Theorem~\ref{act-west} are of the same type as the weight, and are valid for all submultiplicative weights including those growing exponentially fast. The following version of Theorem~\ref{act-west} for Sobolev spaces of real exponent complements the results of Theorems~\ref{act-sob-est}-\ref{act-west}.

\begin{corollary}
For any real $s\ge 1$ there exist a complex neighbourhood $W_{s}$ of $H^{s}_{r}$ and a constant $c_{s}$ such that
\[
  \n{I(\ph)}_{\ell^{1}_{2s}} \le c_{s}^{2}(1+\n{\ph}_{s})^{4s}\n{\ph}_{s}^{2}.
\]
Moreover, the restriction of the Birkhoff map $\Om$ to $H^{s}_{r}$ takes values in $h_{r}^{s}$ and satisfies
\[
  \n{\Om(\ph)}_{s} \le c_{s}(1+\n{\ph}_{s})^{2s}\n{\ph}_{s}.
\]
\end{corollary}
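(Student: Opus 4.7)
The strategy is to deduce the corollary from Theorem~\ref{act-west} by specialising to the weight
\[
  w_{n} \defl \lin{n}^{s},\qquad n\in\Z.
\]
The first step is to verify that $w\in\Mc^{1}$ for every real $s\ge 1$. Symmetry, normalisation ($w_{n}\ge 1$) and monotonicity are immediate. Submultiplicativity follows from $\lin{n+m}\le\lin{n}\lin{m}$ raised to the $s$-th power. To see that $w\in\Mc^{1}$, write $w_{n}=\lin{n}\cdot v_{n}$ with $v_{n}=\lin{n}^{s-1}$; for $s\ge 1$ the weight $v$ lies in $\Mc$ because $s-1\ge 0$ preserves the four axioms under taking powers.

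The second step is to compare the weighted norms $\n{\cdot}_{w}$ with the Sobolev norms $\n{\cdot}_{s}$. From $1+2\abs{n}\le 1+2\abs{n}\pi\le \pi(1+2\abs{n})$ we obtain
\[
  \n{\ph}_{w}\le\n{\ph}_{s}\le\pi^{s}\n{\ph}_{w},
\]
and analogously $\n{\Om(\ph)}_{w}\le\n{\Om(\ph)}_{s}\le \pi^{s}\n{\Om(\ph)}_{w}$, together with $\n{I(\ph)}_{\ell^{1}_{2s}}\le \pi^{2s}\sum_{n}w_{2n}^{2}\abs{I_{n}}$. Consequently the neighbourhood $W_{w}\subset H_{c}^{w}$ supplied by Theorem~\ref{act-west} is at the same time a complex neighbourhood of $H_{r}^{s}$ inside $H_{c}^{s}$, which we take as $W_{s}$.

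The third step is to control the weight evaluated at $16\n{\ph}_{w}^{2}$. Using the piecewise linear extension and the convexity of $t\mapsto(1+t)^{s}$, one has the elementary bound $w[t]\le 2^{s}(1+\abs{t})^{s}$ on $\R$. Combined with $1+16x^{2}\le 16(1+x)^{2}$ for $x\ge 0$ this yields
\[
  w[16\n{\ph}_{w}^{2}]\le 32^{s}(1+\n{\ph}_{w})^{2s}\le 32^{s}(1+\n{\ph}_{s})^{2s}.
\]
Plugging this estimate together with the norm equivalences into Theorem~\ref{act-west}\,(i) and (ii) and absorbing constants into $c_{s}$ produces
\[
  \n{I(\ph)}_{\ell^{1}_{2s}}\le c_{s}^{2}(1+\n{\ph}_{s})^{4s}\n{\ph}_{s}^{2},
  \qquad
  \n{\Om(\ph)}_{s}\le c_{s}(1+\n{\ph}_{s})^{2s}\n{\ph}_{s},
\]
as claimed.

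The argument is mostly bookkeeping once Theorem~\ref{act-west} is available, and no single step is difficult. If anything, the point requiring attention is the verification of $\lin{n}^{s}\in\Mc^{1}$ for non-integer $s$—one must be sure that the factorisation $\lin{n}=\lin{n}\cdot\lin{n}^{s-1}$ yields an honest member of $\Mc$, which fails precisely when $s<1$ and justifies the hypothesis $s\ge 1$ in the statement.
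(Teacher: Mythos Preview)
Your proposal is correct and follows exactly the route the paper intends: the Corollary is stated immediately after Theorem~\ref{act-west} as its specialisation to the Sobolev weight, and the paper gives no separate proof. Your verification that $\lin{n}^{s}\in\Mc^{1}$ precisely when $s\ge 1$, the norm comparison $\n{\ph}_{w}\le\n{\ph}_{s}\le\pi^{s}\n{\ph}_{w}$, and the bound $w[16\n{\ph}_{w}^{2}]\le 32^{s}(1+\n{\ph}_{s})^{2s}$ are the bookkeeping steps one has to carry out, and they are all in order.
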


\emph{Update.} In this updated version of the article~\cite{Molnar:2014vg} we improve the estimates of Theorem~\ref{b-est} \& \ref{act-sob-est} so that the remainders depend only on the $H^{1}$-norm of $\ph$ and the $\ell_{2}^{1}$-norm of $I(\ph)$ instead of the $H^{m-1}$-norm and $\ell_{2m-2}^{1}$-norm, respectively.

\emph{Outline.} The action variables $I_{n}$ and more generally the action variables $J_{n,k}$ on levels $k\ge 1$ can be defined entirely in terms of the periodic spectrum of the associated Zakharov-Shabat operator used in the Lax-pair formulation of the \nls-equation. More to the point, consider the operator
\[
  L(\ph) = 
  \mat[\bigg]{\,\ii & \\  & -\ii} 
  \frac{\ddd}{\dx} +
  \mat[\bigg]{ & \psi \\ \ob\psi & },
\]
with periodic boundary conditions on the interval $[0,2]$ of twice the length of the periodicity of $\ph=(\psi,\ob\psi)\in L^{2}_{r}$. The spectrum of $L(\ph)$ is well known to be real, pure point, and to consist of a double infinite sequence of eigenvalues
\[
  \dotsb \le \lm_{n-1}^{+} < \lm_{n}^{-} \le \lm_{n}^{+} < \lm_{n+1}^{-} \le \dotsb
\]
with asymptotic behaviour $\lm_{n}^{\pm} \sim n\pi$ as $\abs{n}\to\infty$. 
The asymptotic behaviour of the actions on  odd levels $k=2m+1$ turns out to be
\[
  J_{n,2m+1} \sim (\lm_{n}^{\pm})^{2m}I_{n} \sim (n\pi)^{2m}I_{n},\qquad \abs{n}\to \infty,
\]
and they satisfy the trace formula
\[
  \sum_{n\ge 1} J_{n,2m+1} = \frac{(-1)^{m+1}}{4^{m}}\Hc_{2m+1},\qquad m\ge 1,
\]
where $\Hc_{2m+1}$ denotes the $2m+1$th Hamiltonian in the \nls-hierarchy,
\[
  \Hc_{1} = \int_{\T} \abs{\psi}^{2}\,\dx,\quad
  \Hc_{3} = \int_{\T} (\abs{\psi'}^{2} + \abs{\psi}^{4})\,\dx,\quad \dotsc.
\]
Note that $\Hc_{3}$ denotes the Hamiltonian of the \nls equation. On $H_{r}^{m}$, for $m\ge 1$,
\[
  \Hc_{2m+1} = (-1)^{m+1}\int_{\T} \left(\abs{\ps^{(m)}}^{2} + p_{m}(\ps,\dotsc,\ps^{(m-1)})\right)\,\dx,
\]
where $p_{m}$ is a polynomial expression in $\psi$ and its first $m-1$ derivatives.

Viewing $\Hc_{2m+1}$ as a lower order perturbation of the $H_{r}^{m}$-norm we formally obtain at first order
\[
  \sum_{n\in\Z} (n\pi)^{2m} I_{n}
   \sim \sum_{n\in\Z} J_{n,2m+1}
   \sim \n{\ph}_{m}^{2}.
\]

The essential ingredient to make this formal statement explicit is a sufficiently accurate localisation of the periodic eigenvalues $\lm_{n}^{\pm}$ whose threshold in $\abs{n}$ depends only on $\n{\ph}_{1}$. For $\abs{n}$ above this threshold we can directly compare the weighted action norms and the polynomial expressions in $\ph$ as described above, while the remainder for $\abs{n}$ below the threshold can be regard as a $H^{1}$-error term. Thereof we obtain Theorem~\ref{act-sob-est}, which directly implies Theorem~\ref{b-est}. Note that our method of proof completely avoids the use of auxiliary spectral quantities such as \emph{spectral heights} or \emph{conformal mapping theory}. 

To prove Theorem~\ref{act-west}, we take a slightly different approach by quantitatively estimating the action variables in terms of the spacing of the periodic eigenvalues of the associated Zakharov-Shabat operator. For the latter we obtain estimates in any weighted norm, which allows us to obtain Theorem~\ref{act-west}.

%

\emph{Related results.}
Theorem~\ref{b-est} for $m=0$ is referred to as Parseval's identity,
\[
  \frac{1}{2}\n{\Om(\ph)}_{0} = \n{I(\ph)}_{\ell^{1}} = \frac{1}{2} \n{\ph}_{0}^{2},
\]
and is well known -- see \cite{Grebert:2014iq,McKean:1997ka}. The case $m=1$ was proved by Korotyaev~\cite{Korotyaev:2005fb} using conformal mapping theory, see also \cite{Korotyaev:2010ft}. However, his method does not seem applicable for the case $m\ge 2$. In fact, it is stated as an open problem in \cite{Korotyaev:2010ft}.

For the case of the KdV-equation
\[
  \partial_{t}u = -\partial_{x}^{3}u + 6u\partial_{x}u,\qquad x\in\T,
\]
Korotyaev~\cite{Korotyaev:2000tc,Korotyaev:2006uh} obtained polynomial bounds of the Sobolev norms $\n{u}_{m}$ in terms of the action variables where the order of the polynomials grows factorial in $m$. Note that the bound in (ii) of Theorem~\ref{act-sob-est} is of order 1 in the Sobolev norm $\n{\ph}_{m}$ and the order of the remainder grows linearly in $m$. It turns out that our method can also be applied to the \kdv-equation. In \cite{Molnar:_ROURXz4} we improve on the bounds obtained by Korotyaev in \cite{Korotyaev:2000tc,Korotyaev:2006uh}.

For \nls in weighted Sobolev spaces the qualitative relationship
\[
  \ph\in H_{r}^{w}\quad \iff \quad \Om(\ph) \in h_{r}^{w}
\]
is a corollary of the methods presented in \cite{Kappeler:2009kp,Djakov:2006ba} -- at least for weights growing at subexponential speed. To the best of our knowledge, the estimate of $\n{\Om(\ph)}_{w}$ on $H_{r}^{w}$ as well as the estimate of $\n{I(\ph)}_{w}$ on a small complex neighbourhood of $H_{r}^{w}$ as presented in Theorem~\ref{act-west} are new.

Viewing the action $I_{n}$ as a 1-smoothing perturbation of the squared modulus of the $n$th Fourier coefficient, our method of comparing the weighted action norms with the Hamiltonians of the \nls-hierarchy amounts to a separate analysis of Fourier modes of low and high frequencies. This idea has a long history in the analysis of nonlinear PDEs. Most recently, it lead Colliander, Keel, Staffilani, Takaoka \& Tao \cite{Colliander:2001wg,Colliander:2001fc,Colliander:2004gc,Colliander:2010fs} to invent the I-Method, which allows to obtain global well posedness of subcritical equations in low regularity regimes where the Hamiltonian (or other integrals) of the equation cease to be well defined. The idea is to damp all sufficiently high Fourier modes of a local solution such that the Hamiltonian can be controlled by weaker norms while still being an >>almost conserved<< quantity. The difficulty here is to choose the damping subtle enough such that the nonlinearity of the equation does not create a significant interaction of low and and high frequencies. Our situation is so to say opposed to that of the I-Method: As we aim for quantitative global estimates, controlling the modes of low frequencies is the most delicate part. 
%
%
Here the localisation of the periodic eigenvalues of the Zakharov-Shabat operator associated with the \nls equation plays a crucial role. Note that there exists a vast amount of literature on the spectral theory of these operators -- see e.g. \cite{Korotyaev:2010ft,Grebert:1998cz,Kappeler:2001hsa,Djakov:2006ba,Grebert:2014iq} and the references therein.

\emph{Organisation of the paper.} In section~\ref{s:setup} we recall the definition of the \nls action variables on integer levels $k\ge 1$ as well as the trace formulae relating them to the hierarchy of \nls Hamiltonians. The somewhat lengthy proof of the analyticity properties of the action integrand can be found in the appendix~\ref{a:ana}.  The quadratic localisation of the Zakharov-Shabat spectrum is obtained in section~\ref{s:trap-sp}, and is subsequently used in sections~\ref{s:act-est} and \ref{s:sob-est} to obtain Theorem~\ref{act-sob-est} (i) and (ii), respectively. In the final section~\ref{s:act-west} we obtain the estimate of the actions in terms of the spacing of the Zakharov-Shabat eigenvalues which implies Theorem~\ref{act-west}.

\emph{Acknowledgement.} The author is very grateful to Professor Thomas Kappeler for continued support and helpful comments on this paper.

\section{Setup}
\label{s:setup}

In this section we briefly recall the definition of \nls action variables from~\cite{Grebert:2014iq}, as well as the main properties of the periodic spectrum of Zakharov-Shabat operators used to define them.

For a \emph{potential} $\ph=(\phm,\php)\in H_{c}^0 = L^2_c$ consider the Zakharov-Shabat operator
\[
  L(\ph) \defl
  \mat[\bigg]{\,\ii & \\  & -\ii} 
  \frac{\ddd}{\dx} +
  \mat[\bigg]{ & \phm \\ \php & }
\]
on the interval $[0,2]$ with periodic boundary conditions. The spectrum of $L(\ph)$ is well known to be pure point, and more precisely, to consist of a sequence of pairs of complex eigenvalues $\lm_n^+(\ph)$ and $\lm_n^-(\ph)$, listed with algebraic multiplicities, such that
\[
  \lm_n^\pm(\ph) = n\pi + \ell^2_n,\qquad n\in\Z.
\]
Here $\ell_n^2$ denotes a generic $\ell^2$-sequence. We may order the eigenvalues lexicographically -- first by their real part, and second by their imaginary part -- to represented them as a bi-infinite sequence of complex eigenvalues
\[
  \dotsb \lex \lambda_{n-1}^+ \lex \lambda_{n}^- \lex \lambda_{n}^+ \lex \lambda_{n+1}^- \lex \dotsb.
\]
By a slight abuse of notation, we call the eigenvalues of $L(\ph)$ the \emph{periodic spectrum of $\ph$}. Further we introduce the \emph{gap lengths}
\[
  \gm_n \defl \lm_n^+ - \lm_n^- = \ell^2_n,\qquad n\in\Z.
\]

To obtain another characterisation of the periodic spectrum, we denote by $M(x,\lm,\ph)$ the standard fundamental solution of the ordinary differential equation $L(\ph)M = \lm M$, and introduce the discriminant
\[
  \Dl(\lm,\ph) \defl  \operatorname{tr} M(1,\lm,\ph).
\]
To simplify matters, we may drop some or all of its arguments from the notation, whenever there is no danger of confusion. The periodic spectrum of $\ph$ is precisely the zero set of the entire function $\Dl^2(\lm) - 4$, and we have the product representation
\[
  \Dl^2(\lm) - 4
   = 
  -4\prod_{n\in\Z}
  \frac{(\lm_n^+ - \lm)(\lm_n^- - \lm)}{\pi_n^2},
  \quad
  \pi_n
   \defl 
  \begin{cases}
  n\pi, & n\neq 0,\\
  1, & n=0.
  \end{cases}
\]
Hence, this function is a spectral invariant. We also need the $\lm$-derivative $\dDl\defl\partial_{\lm}\Dl$ whose zeros are denoted by $\lm_{n}^{\ld}$ and satisfy $\lm_{n}^{\ld} = n\pi + \ell^{2}_{n}$. This derivative has the product representation
\[
  \dDl(\lm) = 2\prod_{n\in\Z} \frac{\lm_{n}^{\ld}-\lm}{\pi_{n}}.
\]

If the potential $\ph$ is of real type, as in the case of the defocusing \nls equation, the periodic spectrum is real, the eigenvalues are characterised by Floquet theory, and the lexicographical ordering reduces to the ordering of real numbers
\[
  \dotsb
   \le \lm_{n-1}^{+} < \lm_{n}^{-} \le \lm_{n}^{\ld} \le \lm_{n}^{+} < \lm_{n+1}^{-}
   \le \dotsb.
\]
Each $\ph\in L^{2}_{r}$ has an open neighbourhood $V_{\ph}$ within $L^{2}_{c}$ for which there exist disjoint closed discs $(U_{n})_{n\in\Z}$ centred on the real axis with the properties:

\begin{itemize}
\item[(i)] $\lm_{n}^{\pm}(\psi)$ and $\lm_{n}^{\ld}(\psi)$ are contained in the interior of $U_{n}$ for every $\psi\in V_{\ph}$,

\item[(ii)] there exists a constant $c \ge 1$ such that for $m\neq n$,
\[
  c^{-1}\abs{m-n} \le \dist(U_{n},U_{m}) \le c\abs{m-n},
\]

\item[(iii)] $U_{n} = \setd{\abs{\lm-n\pi} \le \pi/4}$ for $\abs{n}$ sufficiently large.
\end{itemize}

\noindent
Such discs are called \emph{isolating neighbourhoods}. The union of all $V_{\ph}$ defines an open and connected neighbourhood of $L^{2}_{r}$ within $L^{2}_{c}$ and is denoted by $W$.
Throughout this text $V_{\ph}$ denotes a neighbourhood of $\ph$ such that a common set of isolating neighbourhoods for all $\psi\in V_{\ph}$ exists.

Following Flaschka \& McLaughlin's approach for the \kdv-equation \cite{Flaschka:1976tc}, one can define action variables for the defocusing \nls-equation by Arnold's formula -- see also \cite{McKean:1997ka}
\[
  I_n
   = 
  \frac{1}{\pi}\int_{a_n}
  \frac{\lm \dDl(\lm)}{\sqrt{\Dl^2(\lm)-4}}
  \,\dlm.
\]
Here $a_{n}$ denotes a cycle around $(\lm_{n}^{-},\lm_{n}^{+})$ on the spectral curve
\[
  C_{\ph} = \setdef{(\lm,z)}{z^2 = \Dl^2(\lm,\ph) - 4}\subset\C,
\]
on which the square root $\sqrt{\Dl^2(\lm)-4}$ is defined. This curve is another spectral invariant associated with $\ph$, and an open Riemann surface of infinite genus if and only if the periodic spectrum of $\ph$ is simple. To avoid the technicalities involved with this curve, we follow the approach presented in \cite{Grebert:2014iq} and fix proper branches of the square root which allows us to reduce the definition of the actions to standard contour integrals in the complex plane.

Firstly, we denote by $\sqrt[+]{\phantom{\lm}}$ the \emph{principal branch} of the square root on the complex plane minus the ray $(-\infty,0]$. Secondly, we define the \emph{standard root}
\[
  \vs_{n}(\lm) = \sqrt[\mathrm{s}]{(\lm_{n}^{+}-\lm)(\lm_{n}^{-}-\lm)},
                     \qquad \lm\notin [\lm_{n}^{-},\lm_{n}^{+}],
\]
by the condition
\begin{align}
  \label{s-root}
  \vs_{n}(\lm) \defl (\tau_{n}-\lm)\sqrt[+]{1 - \gm_{n}^{2}/4(\tau_{n}-\lm)},
  						 \qquad \tau_{n} = (\lm_{n}^{-}+\lm_{n}^{+})/2,
\end{align}
for all $\abs{\lm}$ sufficiently large. For any $\ph\in W$ the standard root is analytic in $\lm$ on $\C\setminus[\lm_{n}^{-},\lm_{n}^{+}]$ and in $(\lm,\psi)$ on $(\C\setminus U_{n})\times V_{\ph}$. Thirdly, we define the \emph{canonical root} $\sqrt[c]{\Dl^{2}(\lm)-4}$ by the product representation
\[
  \sqrt[c]{\Dl^{2}(\lm)-4} \defl 2\ii\prod_{m\in\Z} \frac{\vs_{m}(\lm)}{\pi_{m}}.
\]
For any $\ph\in W$ this root is analytic in $\lm$ on $\C\setminus\bigcup_{\gm_{n}\neq 0} [\lm_{n}^{-},\lm_{n}^{+}]$ and in $(\lm,\psi)$ on $(\C\setminus \bigcup_{n\in\Z} U_{n})\times V_{\ph}$ -- see \cite[Section 12]{Grebert:2014iq} for all the details.

The $n$th \nls action variable of $\ph\in W$ is then given by
\[
  \quad I_n
   \defl 
  \frac{1}{\pi}\int_{\Gm_n}
  \frac{\lm \dDl(\lm)}{\sqrt[c]{\Dl^2(\lm)-4}}\,\dlm,
\]
where $\Gm_{n}$ denotes any sufficiently close circuit around $[\lm_{n}^{-},\lm_{n}^{+}]$. More generally, the $n$th action on level $k\ge 1$ is given by
\[
  J_{n,k}
   \defl
  \frac{1}{k\pi}\int_{\Gm_n}
  \frac{\lm^{k}\dDl(\lm)}{\sqrt[c]{\Dl^{2}(\lm)-4}}\,\dlm.
\]
It was shown in \cite{Grebert:2014iq} and, for convenience of the reader, will be reproved in the sequel that each action variable is analytic on $W$ and vanishes if and only if $\gm_{n}$ is zero.

If $\ph$ is of real type, then all actions are real and those on odd levels, such as $J_{n,1} = I_{n}$, are nonnegative. Moreover, the actions on level one are well known to satisfy the trace formula,
\begin{align}
  \label{tf-1}
  \sum_{n\in\Z} I_{n}(\ph) = \Hc_{1}(\ph) = \frac{1}{2}\n{\ph}_{0}^{2}
   = \frac{1}{2}\int_{\T} (\abs{\phm}^{2} + \abs{\php}^{2})\,\dx.
\end{align}
Similar trace formulae have been derived by McKean \& Vaninsky~\cite{McKean:1997ka} expressing the actions on any level $k\ge 1$ in terms of Hamiltonians of the \emph{\nls-hierarchy}. The first three Hamiltonians of this hierarchy are
\begin{align*}
  \Hc_{1}(\ph) &= \phantom{\frac{1}{2}} \int_{\T} \phm\php\,\dx,\\
  \Hc_{2}(\ph) &= \frac{1}{2} \int_{\T} (\phm'\php - \phm\php')\,\dx,\\
  \Hc_{3}(\ph) &= \phantom{\frac{1}{2}} \int_{\T} (\phm'\php' + \php^{2}\phm^{2})\,\dx,
\end{align*}
and in general, for any sufficiently regular $\ph\in L^{2}_{c}$,
\[
  \Hc_{k+1}(\ph) = 
  \int_{\T} (-\phm^{\phantom{.}}\php^{(k)} + q_{k}(\ph,\dotsc,\ph^{(k-1)}))\,\dx,\qquad k\ge 1,
\]
with $q_{k}$ being a canonically determined polynomial in $\ph$ and its first $k-1$ derivatives -- see appendix~\ref{s:hamil}. The following version of the trace formula is taken from \cite{Grebert:2014iq}.

\begin{theorem}[Trace Formula]
\label{tf}
For any $k\ge 2$ and any $\ph\in H_{c}^{k-1}\cap W$,
\begin{align}
  \label{tf-k}
  \sum_{n\in\Z} J_{n,k}(\ph) = -\frac{1}{(2\ii)^{k-1}}\Hc_{k}(\ph).
\end{align}
\end{theorem}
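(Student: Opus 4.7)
My plan follows the classical strategy for trace formulas: collapse the individual contour integrals to a single integral at infinity, then extract the Hamiltonians via an asymptotic expansion of the discriminant. For the first step I invoke Cauchy's theorem. Since $\dDl$ and $\lm^k$ are entire and $\sqrt[c]{\Dl^2-4}$ is analytic and nonvanishing on $\C\setminus\bigcup_n[\lm_n^-,\lm_n^+]$, the integrand is analytic off the spectral gaps; the asymptotics $\lm_n^\pm = n\pi + \ell_n^2$ (uniform on $V_{\ph}$) place the circles $\Gm_{R_N} \defl \{\lm\in\C:|\lm|=(N+\tfrac12)\pi\}$ in this analyticity region for all $N$ large enough. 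Collapsing the individual cycles $\Gm_n$ onto $\Gm_{R_N}$ gives
\[
  \sum_{|n|\le N} J_{n,k}(\ph) \;=\; \frac{1}{k\pi}\oint_{\Gm_{R_N}}\frac{\lm^k\dDl(\lm)}{\sqrt[c]{\Dl^2(\lm)-4}}\,\dlm.
\]

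For the second step I would derive the asymptotic expansion
\[
  \frac{\dDl(\lm)}{\sqrt[c]{\Dl^2(\lm)-4}} \;\sim\; -\ii - \sum_{k\ge 1}\frac{k\Hc_k(\ph)}{(2\ii)^k\,\lm^{k+1}},\qquad |\lm|\to\infty,
\]
with a remainder that is uniform in $\ph$ on bounded subsets of $H_c^{k-1}\cap W$. This comes from iterating the Volterra integral equation for the fundamental solution $M(x,\lm)$ of $L(\ph)M=\lm M$, which produces an asymptotic series for $\Dl=\operatorname{tr}M(1,\cdot)$, and then inverting via the Floquet exponent (i.e. the relation $\cosh\xi=\Dl/2$ coupled with $\xi'=\dDl/\sqrt[c]{\Dl^2-4}$). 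After multiplication by $\lm^k$ the only term of order $\lm^{-1}$ comes from the $k$-th summand of the series, so a residue calculation on $\Gm_{R_N}$ gives
\[
  \frac{1}{k\pi}\oint_{\Gm_{R_N}}\frac{\lm^k\dDl(\lm)}{\sqrt[c]{\Dl^2(\lm)-4}}\,\dlm \;\xrightarrow{N\to\infty}\; -\frac{\Hc_k(\ph)}{(2\ii)^{k-1}}.
\]

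To pass to the limit on the left-hand side I would use the asymptotics $J_{n,k}=O((n\pi)^{k-1}\gm_n^2)$, uniform on $V_{\ph}$ (itself a consequence of the analyticity of the actions on $W$ together with the vanishing $J_{n,k}|_{\gm_n=0}=0$ recorded after the definition of the actions), combined with the $\ell^2$-summability of $(n\pi)^{k-1}\gm_n$ on $H_c^{k-1}$ (which in turn follows from the smoothing property of the Birkhoff map and the hypothesis $\ph\in H_c^{k-1}$). The main obstacle is the justification of the asymptotic expansion with the explicit identification of its coefficients as $\Hc_k$: the Volterra iteration naturally produces integrals of polynomials in $\phm,\php$ and their derivatives, and reconciling these with the canonical form $\Hc_k = \int_{\T}(-\phm\php^{(k-1)} + q_{k-1}(\ph,\dotsc,\ph^{(k-2)}))\,\dx$ recorded in appendix~\ref{s:hamil} requires repeated integration by parts and a careful combinatorial induction on $k$.
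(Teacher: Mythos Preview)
The paper does not actually prove this theorem; it is quoted from \cite{Grebert:2014iq}. Your strategy---collapse the cycles $\Gm_n$ onto a large contour and read off the $k$th Hamiltonian from the asymptotic expansion of the Floquet differential---is precisely the standard one, and is also the route implicit in the machinery the paper develops around the statement (Lemma~\ref{F-asy} for the expansion, and the argument in Lemma~\ref{H3-est} for a special case).

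The only noteworthy difference is organisational. The paper (following \cite{Grebert:2014iq}) first integrates by parts to write $J_{n,k}=-\frac{1}{\pi}\int_{\Gm_n}\lm^{k-1}F(\lm)\,\dlm$ and works with the primitive $F=\cosh^{-1}(\Dl/2)$, whose Laurent expansion \emph{at finite gap potentials} is established in Lemma~\ref{F-asy} with the coefficients already identified as the $\Hc_k$ via the cited asymptotics of $\cosh^{-1}(\Dl/2)$; the general case then follows by density of finite gap potentials in $H_c^{k-1}\cap W$ and continuity of both sides. You instead work with $\om=\ddd F$ directly and aim for a truncated asymptotic expansion valid for all $\ph\in H_c^{k-1}\cap W$ with a uniform remainder. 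Both lead to the same residue computation; the paper's detour through finite gap potentials is what lets it sidestep the ``main obstacle'' you flag (the combinatorial identification of the expansion coefficients with the canonical $\Hc_k$), since for finite gap $\ph$ the expansion is a genuine Laurent series and the identification can be imported wholesale from \cite{Kappeler:4WN-jiH9}. Your direct approach is equally valid but, as you note, makes that identification the crux of the argument.
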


In particular, for every sufficiently regular real type potential
\[
  \sum_{n\in\Z} J_{n,2m+1}(\ph) =
   \frac{1}{4^{m}} \int_{\T} \bigl(\abs{\ph^{(m)}}^{2} + \dotsb\bigr)\,\dx,\qquad m\ge 0.
\]
This identity is used in sections~\ref{s:act-est} and \ref{s:sob-est} to estimate the actions on level $2m+1$ in terms of $\n{\ph}_{m}$. In order to obtain thereof estimates for the action variables on level one, a detailed analysis of the analytical properties of the action integrand is necessary. To this end, we define for any $\ph\in W$ on $(\C\setminus \bigcup_{n\in\Z} U_{n}) \times V_{\ph}$ the complex 1-form
\begin{align}
  \label{om}
  \om(\lm,\psi) 
  \defl \frac{\dDl(\lm,\psi)}{\sqrt[c]{\Dl^{2}(\lm,\psi)-4}}\,\dlm
      = -\ii\prod_{m\in\Z} \frac{\lm_{m}^{\ld}(\psi)-\lm}{\vs_{m}(\lm,\psi)} \dlm.
\end{align}
We call a path in the complex plane \emph{admissible} for $\ph\in L^{2}_{c}$ if, except possibly at its endpoints, it does not intersect any gap $[\lm_{n}^{-}(\ph),\lm_{n}^{+}(\ph)]$.

\begin{lemma}
\label{w-closed}
For each $\ph\in W$, the 1-form $\om$ has the following properties:
\begin{itemize}
\item[(i)]
$\om$ is analytic on $(\C\setminus \bigcup_{n\in\Z} U_{n}) \times V_{\ph}$, 
\item[(ii)]
$\om(\cdot,\ph)$ is analytic on $\C\setminus \bigcup_{\gm_{n}\neq 0} [\lm_{n}^{-},\lm_{n}^{+}]$, and
\item[(iii)]
for any admissible path from $\lm_{n}^{-}$ to $\lm_{n}^{+}$ in $U_{n}$,
\[
  \int_{\lm_{n}^{-}}^{\lm_{n}^{+}} \om =  0.
\]
In particular, for any closed circuit $\Gm_{n}$ in $U_{n}$ around $[\lm_{n}^{-},\lm_{n}^{+}]$,
\[
  \int_{\Gm_{n}} \om = 0.
\]
\end{itemize}
\end{lemma}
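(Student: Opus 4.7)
The plan is to dispatch (i) and (ii) directly from the analyticity properties of $\sqrt[c]{\Dl^{2}-4}$ and $\dDl$ recorded in the setup, and to establish (iii) by exhibiting a local antiderivative of $\om$. For (i), on $(\C\setminus\bigcup_{n\in\Z}U_{n})\times V_{\ph}$ each factor $\vs_{m}(\lm,\psi)/\pi_{m}$ of the canonical root is analytic and non-vanishing (since $\lm\notin U_{m}$), while $\dDl$ is entire in $\lm$ and analytic in $\psi$; hence the quotient $\dDl/\sqrt[c]{\Dl^{2}-4}$ is analytic on this set, and dividing the two product formulae factor-by-factor yields the infinite product in \eqref{om}. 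For (ii) one specialises to $\psi=\ph$ and observes that $\sqrt[c]{\Dl^{2}-4}$ remains analytic and non-vanishing on $\C\setminus\bigcup_{\gm_{n}\neq 0}[\lm_{n}^{-},\lm_{n}^{+}]$.

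The core of (iii) is that $\om$ is locally exact. A direct differentiation using $\partial_{\lm}\sqrt[c]{\Dl^{2}-4} = \Dl\dDl/\sqrt[c]{\Dl^{2}-4}$ verifies that on any simply connected subdomain of the analyticity domain of the canonical root, with a branch of the logarithm fixed,
\[
  \om = \ddd\log\!\biggl(\frac{\Dl(\lm) + \sqrt[c]{\Dl^{2}(\lm)-4}}{2}\biggr).
\]
At each endpoint $\lm_{n}^{\pm}$ of an admissible path the canonical root vanishes, and since $\lm_{n}^{-}$ and $\lm_{n}^{+}$ are two consecutive zeros of $\Dl^{2}-4$ between which $\Dl$ keeps its sign (by Floquet-type considerations for real-type $\ph$, extended to all $\psi\in V_{\ph}$ by continuity), one has $\Dl(\lm_{n}^{-}) = \Dl(\lm_{n}^{+}) \in \{+2,-2\}$. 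The argument of the logarithm therefore assumes the common value $\pm 1$ at both endpoints.

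To conclude $\int_{\gamma}\om = 0$ it remains to fix a branch of the logarithm that is single-valued along the admissible path $\gamma\subset U_{n}$. For real-type $\ph$, on $\R\setminus\bigcup_{m}[\lm_{m}^{-},\lm_{m}^{+}]$ one has $\sqrt[c]{\Dl^{2}-4} = \ii\sqrt{4-\Dl^{2}}$, so $(\Dl+\sqrt[c]{\Dl^{2}-4})/2$ lies on the unit circle; shrinking $U_{n}$ if necessary, it stays in a simply connected region avoiding $0$ along any admissible path in $U_{n}$, and a continuous branch of the logarithm is available. The general case $\psi\in V_{\ph}$ then follows by analytic continuation from the real-type subspace, using that $V_{\ph}$ is connected and the integral depends analytically on $\psi$ by (i). The closed-circuit statement for $\Gm_{n}$ reduces to the admissible-path statement by decomposing $\Gm_{n}$ as the concatenation of two admissible paths between the same endpoints with opposite orientations. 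The main technical subtlety I anticipate is this branch-consistency argument for non-real $\psi$; an alternative that bypasses it is to collapse $\Gm_{n}$ onto the cut and compute the integral directly from the jump of $\sqrt[c]{\Dl^{2}-4}$ across $[\lm_{n}^{-},\lm_{n}^{+}]$.
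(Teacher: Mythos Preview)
Your treatment of (ii) contains an error: at a collapsed gap $\lm_{n}^{-}=\lm_{n}^{+}=\lm_{n}$ the canonical root is \emph{not} non-vanishing, since the factor $\vs_{n}(\lm)=\lm_{n}-\lm$ has a simple zero there. What makes $\om$ analytic at such a point is that $\dDl$ also has its unique root in $U_{n}$ at $\lm_{n}^{\ld}=\lm_{n}$, so the singular factors cancel in the product representation; the paper spells out exactly this cancellation, and your argument should too.

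For (iii) your idea is essentially the paper's --- exhibit $\log\bigl((-1)^{n}(\Dl+\sqrt[c]{\Dl^{2}-4})/2\bigr)$ as a primitive and extend from real type by analyticity --- but you run the logic in the opposite order, and this is where the difficulty you flag comes from. You try to prove $\int_{\lm_{n}^{-}}^{\lm_{n}^{+}}\om=0$ first by evaluating the primitive at the endpoints, which forces you to track a single branch of the logarithm along an admissible path all the way into the branch points; your unit-circle observation applies on the real bands \emph{between} gaps, not along an admissible path skirting the cut, so it does not by itself pin down the branch. The paper avoids this by proving the \emph{circuit} integral first: for real type $\ph$ one has $(-1)^{n}\Dl\ge 2$ on $[\lm_{n}^{-},\lm_{n}^{+}]$, hence on a sufficiently tight circuit $(-1)^{n}(\Dl+\sqrt[c]{\Dl^{2}-4})/2$ stays in the right half-plane, the principal logarithm is a genuine primitive on the whole circuit, and $\int_{\Gm_{n}}\om=0$ follows and then propagates to all of $V_{\ph}$ by analyticity. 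Only afterwards does the paper deduce the admissible-path statement, via the sign flip of $\vs_{n}$ across the cut (your ``alternative''): the two sides give integrals that are simultaneously equal (since the circuit integral vanishes) and opposite (by the sign flip), hence zero. Reordering your argument this way removes the branch-consistency headache entirely.
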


\begin{proof}
Since $\Dl^{2}(\lm,\psi)-4$ vanishes if and only if $\lm$ is a periodic eigenvalue, and numerator and dominator of $\om(\lm,\psi)$ are analytic on $(\C\setminus \bigcup_{n\in\Z} U_{n}) \times V_{\ph}$, the first claim follows immediately.

By the same reasoning, $\om(\cdot,\ph)$ is analytic on $\C\setminus \bigcup_{n\in\Z} [\lm_{n}^{-},\lm_{n}^{+}]$. Moreover, if the $n$th gap is collapsed, that is $\lm_{n}^{+} = \lm_{n}^{-} \defr \lm_{n}$, then $\Dl^{2}(\lm)-4$ has a double root at $\lm_{n}$, and hence
\[
  0 = \Dl(\lm_{n})\dDl(\lm_{n}) = 2(-1)^{n}\dDl(\lm_{n}).
\]
As $\dDl(\lm)$ has a single root in $U_{n}$, namely $\lm_{n}^{\ld}$, we conclude $\lm_{n}^{\ld} = \lm_{n} = \lm_{n}^{\pm}$, and the $n$th term in the product representations of $\dDl$ and $\sqrt[c]{\Dl^{2}-4}$ cancels. So $\om(\cdot,\ph)$ is analytic on all of $U_{n}$, and the second claim follows.

We first consider the case where $\ph$ is of real type. Then $(-1)^{n}\Dl(\lm,\ph) \ge 2$ on $[\lm_{n}^{-},\lm_{n}^{+}]$, as shown in \cite{Grebert:2014iq}, and hence
\[
  \min_{\lm_{n}^{-} \le \lm\le \lm_{n}^{+}} (-1)^{n}\Dl(\lm,\ph) - \sqrt[+]{\Dl^{2}(\lm,\ph)-4} > 0.
\]
Thus, if we choose a circuit $\Gm$ sufficiently close to $[\lm_{n}^{-},\lm_{n}^{+}]$, and a sufficiently small neighbourhood $V\subset V_{\ph}$ of $\ph$, then $[\lm_{n}^{-}(\psi),\lm_{n}^{+}(\psi)]$ is enclosed by $\Gm$ and the real part of $(-1)^{n}(\Dl(\lm,\psi) - \sqrt[c]{\Dl^{2}(\lm,\psi)-4})$ is positive on that circuit for all $\psi\in V$. In consequence, the principal branch of the logarithm
\[
  l_{n}(\lm,\psi) = \log\frac{(-1)^{n}}{2}\left(\Dl(\lm,\psi) + \sqrt[c]{\Dl^{2}(\lm,\psi)-4}\right)
\]
is analytic in a neighbourhood of $\Gm$ and $\ddd l_{n} = \om$. It follows that the analytic functional $\psi \mapsto \int_{\partial U_{n}} \om$ vanishes on an open neighbourhood of $\ph$, and hence on all of $V_{\ph}$. Since $W = \bigcup_{\ph\in L^{2}_{r}} V_{\ph}$, it follows that $\int_{\Gm_{n}}\om = 0$ for every $\ph\in W$.

Finally, consider $\int_{\lm_{n}^{-}}^{\lm_{n}^{+}} \om$ with the path of integration chosen to be admissible. As $\om$ is closed around the gap, the integral does not depend on the chosen admissible path. Suppose $\gm_{n} \neq 0$, then by the product representation \eqref{om} of $\om$,
\begin{align}
  \label{chi-om}
  \int_{\lm_{n}^{-}}^{\lm_{n}^{+}} \om
   = -\ii \int_{\lm_{n}^{-}}^{\lm_{n}^{+}} \frac{\lm_{n}^{\ld}-\lm}{\vs_{n}(\lm)}\chi_{n}(\lm)\,\dlm,
   \qquad \chi_{n}(\lm) \defl \prod_{m\neq n} \frac{\lm_{m}^{\ld} - \lm}{\vs_{m}(\lm)},
\end{align}
where $\chi_{n}(\lm)$ is analytic on $\C\setminus\bigcup_{m\neq n}[\lm_{m}^{-},\lm_{m}^{+}]$ -- see \cite[Section 12]{Grebert:2014iq}. Further, by the definition of the standard root \eqref{s-root}, the function
\[
  z \mapsto \vs_{n}(\tau_{n} + z \gm_{n}/2) = -z\sqrt[+]{1-z^{-2}}\gm_{n}/2
\]
is analytic on $\C\setminus[-1,1]$. For $s \in (-1,1)$ consider $z_{s} = s \pm \ii \ep$ to conclude
\[
  \vs_{n}(\tau_{n} + (s \pm \ii 0) \gm_{n}/2) = \mp \ii \sqrt[+]{1-s^{2}}\gm_{n}/2.
\]
In particular, if $\al_{1}$ and $\al_{2}$ are admissible paths from $\lm_{n}^{-}$ to $\lm_{n}^{+}$ running on different sides of $[\lm_{n}^{-},\lm_{n}^{+}]$, then the integrand takes the opposite sign on these paths and hence
\[
  \int_{\al_{1}} \om = -\int_{\al_{2}} \om.
\]
On the other hand, as $\int_{\partial U_{n}} \om = 0$, the integral is independent of the chosen path $\al_{i}$ and thus needs to be zero.\qed
\end{proof}

If we write the action variables as
\begin{align}
  \label{Jn-om}
  J_{n,k} = \frac{1}{k\pi}\int_{\Gm_{n}} \lm^{k}\om,
\end{align}
then the analyticity on $W$ is evident. To proceed we need to find a globally defined primitive of $\om$. So for $\ph\in W$ we define on $(\C\setminus\bigcup_{n\in\Z} U_{n}) \times V_{\ph}$ the mapping
\[
  F(\lm,\psi) \defl \frac{1}{2}\left(\int_{\lm_{0}^{-}(\psi)}^{\lm} \om(\mu,\psi) + \int_{\lm_{0}^{+}(\psi)}^{\lm} \om(\mu,\psi)\right),
\]
where the paths of integration are chosen to be admissible. These improper integrals exist, as for $\gm_{0} = 0$ the integrand  is analytic on $U_{0}$, while for $\gm_{0}\neq 0$ it is of the form $1/\sqrt{\lm_{0}^{\pm}-\lm}$ locally around $\lm_{0}^{\pm}$. By Lemma~\ref{w-closed} they are also independent of the chosen admissible path. Hence $F$ is well defined and one has
\[
  F(\lm,\psi) = \int_{\lm_{0}^{-}(\psi)}^{\lm} \om(\mu,\psi) = \int_{\lm_{0}^{+}(\psi)}^{\lm} \om(\mu,\psi).
\]

Even though the eigenvalues $\lm_{0}^{\pm}$ are, due to their lexicographical ordering, not even continuous on $W$, the mapping $F$ turns out to be differentiable.

\begin{lemma}
\label{F-prop}
For every $\ph\in W$, we have that

\begin{itemize}
\item[(i)] $F$ is analytic on $(\C\setminus\bigcup_{n\in\Z} U_{n}) \times V_{\ph}$, and

\item[(ii)] $F(\cdot,\ph)$ can be continuously extended onto $\C\setminus \bigcup_{\gm_{n}\neq 0} (\lm_{n}^{-}, \lm_{n}^{+})$ with
\[
  F(\lm_{n}^{+},\ph) = F(\lm_{n}^{-},\ph),\qquad n\in\Z.
\]
\item[(iii)] If, in addition, $\ph\in L^{2}_{r}$, then locally around $[\lm_{n}^{-},\lm_{n}^{+}]$
\[
  F(\lm) = l_{n}(\lm) - \ii n\pi,\qquad l_{n}(\lm) = \log\frac{(-1)^{n}}{2}\left(\Dl(\lm) + \sqrt[c]{\Dl^{2}(\lm)-4}\right).
\]
In particular, for any real $\lm_{n}^{-} < \lm < \lm_{n}^{+}$,
\[
  F(\lm \pm \ii 0) = \pm f_{n}(\lm) - \ii n\pi,
  \qquad f_{n}(\lm) = \cosh^{-1}((-1)^{n}\Dl(\lm)/2).
\]
Clearly, $f_{n}$ is continuous on $[\lm_{n}^{-},\lm_{n}^{+}]$, strictly positive on $(\lm_{n}^{-},\lm_{n}^{+})$, and vanishes at the boundary points.
\item[(iv)] At the zero potential one has $F(\lm,0) = -\ii \lm$.

\end{itemize}

\end{lemma}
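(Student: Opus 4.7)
\emph{Proof plan.} I would address the four parts in the order (iv), (i), (ii), (iii), since the explicit zero-potential computation in (iv) pins down the constants that appear in (iii) and gives a sanity check throughout.

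For part~(iv) I would compute everything in closed form at $\ph=0$. Here $\Dl(\lm,0)=2\cos\lm$ and $\dDl(\lm,0)=-2\sin\lm$; all gaps are collapsed so $\vs_{m}(\lm,0)=m\pi-\lm$ and the product representation gives $\sqrt[c]{\Dl^{2}(\lm)-4}\bigr|_{\ph=0}=-2\ii\sin\lm$ via the Euler product $\sin\lm=\lm\prod_{m\ne0}(1-\lm/m\pi)$. Hence $\om(\lm,0)=-\ii\,\dlm$. Since $\lm_{0}^{\pm}(0)=0$ the averaged integral gives $F(\lm,0)=-\ii\lm$.

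For part~(i) the delicate issue is that $\lm_{0}^{\pm}(\psi)$ may not even be continuous on $W$; this is why the definition averages the two endpoints. Fixing $\ph\in W$ and a common set of isolating neighbourhoods on $V_{\ph}$, I would pick a basepoint $\mu_{0}\in\partial U_{0}$ and write
\[
  F(\lm,\psi)=F(\mu_{0},\psi)+\int_{\mu_{0}}^{\lm}\om(\cdot,\psi).
\]
The second term is analytic in $(\lm,\psi)$ on $(\C\setminus\bigcup_{n}U_{n})\times V_{\ph}$ because $\om$ is analytic there and the path can be chosen fixed for $\psi$ close to $\ph$. For the first term I would make the change of variables $\lm=\tau_{0}(\psi)+z\gm_{0}(\psi)/2$, which maps a neighbourhood of $U_{0}$ to a fixed disc in the $z$-plane and turns $\vs_{0}$ into $-(z/2)\sqrt[+]{1-z^{-2}}\gm_{0}(\psi)$; the resulting integrand is analytic in $(z,\psi)$ on the complement of the slit $[-1,1]$ in $z$, the endpoints become $z=\pm1$, and the symmetric averaging makes the integral over $[-1,\mu_{0}(\psi)]\cup[1,\mu_{0}(\psi)]$ analytic in $\psi$ even at collapsed gaps. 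This is the main obstacle, and the symmetrisation together with Lemma~\ref{w-closed}(iii), which makes the integrals along the two admissible sides of the gap cancel, is precisely what removes it.

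For part~(ii) the integrability near $\lm_{n}^{\pm}$ follows from the local shape $\om\sim c\,\dlm/\sqrt{\lm_{n}^{\pm}-\lm}$ inherited from $\vs_{n}$, so the improper integrals converge and provide the continuous extension. The identity $F(\lm_{n}^{+},\ph)=F(\lm_{n}^{-},\ph)$ follows by taking the integral along an admissible path from $\lm_{n}^{-}$ to $\lm_{n}^{+}$ inside $U_{n}$: by Lemma~\ref{w-closed}(iii) this vanishes, hence the values of $F$ at the two endpoints agree. Finally for part~(iii) I would use that $l_{n}$ (as constructed in the proof of Lemma~\ref{w-closed}) is a primitive of $\om$ on a neighbourhood of $U_{n}$ minus the slit, so $F-l_{n}$ is locally constant in $\lm$. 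The value of this constant is determined by a homotopy in $L^{2}_{r}$ from $\ph$ to the zero potential: both $F$ and $l_{n}$ depend continuously along the deformation (the slit $[\lm_{n}^{-},\lm_{n}^{+}]$ stays inside $U_{n}$), and at $\ph=0$ the explicit formulas from (iv) give $l_{n}(\lm,0)=\log e^{-\ii(\lm-n\pi)}=-\ii(\lm-n\pi)$ locally around $n\pi$, so the constant is $-\ii n\pi$. The boundary-value formula for $f_{n}$ then follows by letting $\lm\to\lm\pm\ii0$ in $l_{n}$: across the slit the canonical root jumps to $\pm\ii(-1)^{n}\sqrt[+]{4-\Dl^{2}(\lm)}$, turning $l_{n}$ into $\pm\cosh^{-1}((-1)^{n}\Dl(\lm)/2)$, which is continuous, strictly positive on the open gap and vanishes at its endpoints.
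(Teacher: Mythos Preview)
Your treatment of (ii) and (iv) is essentially the paper's, and for (i) you correctly isolate the only nontrivial piece, namely the analyticity of $\psi\mapsto F(\mu_{0},\psi)$ for a fixed $\mu_{0}\in\partial U_{0}$. However, the change of variables $\lm=\tau_{0}(\psi)+z\,\gm_{0}(\psi)/2$ does not map a neighbourhood of $U_{0}$ to a fixed region: the image of the fixed basepoint $\mu_{0}$ is $z_{\mu_{0}}=2(\mu_{0}-\tau_{0})/\gm_{0}$, which tends to infinity as $\gm_{0}\to 0$, so the contour in the $z$-plane is not uniformly controlled and the argument does not yield analyticity across the subvariety $Z_{0}=\{\gm_{0}^{2}=0\}$. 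The paper (Appendix~\ref{a:ana}) proceeds instead in three steps: analyticity on $V_{\ph}\setminus Z_{0}$ via an affine parametrisation from one locally analytic eigenvalue $\rho_{1}$ to a suitably chosen $\nu\in\partial U_{0}$; continuity across $Z_{0}$ by estimating the contribution of a shrinking gap (using $\abs{\lm_{0}^{\ld}-\tau_{0}}\le\abs{\gm_{0}}/2$ from Lemma~\ref{ld-tau}); and weak analyticity along $Z_{0}$, where $\lm_{0}^{\ld}=\tau_{0}$ and $F(\nu,\psi)=-\ii\int_{\tau_{0}}^{\nu}\chi_{0}\,\dlm$ is manifestly analytic. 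A removable-singularity principle for analytic subvarieties then gives the conclusion.

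The homotopy argument for (iii) has a genuine gap. You establish that $c(\psi)\defl F(\lm,\psi)-l_{n}(\lm,\psi)$ is independent of $\lm$ and depends continuously on $\psi$ along the segment from $\ph$ to $0$ in $L^{2}_{r}$, and then compute $c(0)=-\ii n\pi$. But continuity along a path does not determine $c(\ph)$ from $c(0)$; nothing forces $c$ to take values in a discrete set, so it could vary continuously. The paper computes $c(\ph)$ directly for every $\ph\in L^{2}_{r}$: since $l_{n}(\lm_{n}^{\pm})=\log 1=0$ one has $c(\ph)=F(\lm_{n}^{\pm},\ph)$, and using $\int_{\lm_{k}^{-}}^{\lm_{k}^{+}}\om=0$ together with $\ii(-1)^{k}\sqrt[c]{\Dl^{2}-4}>0$ on the band $(\lm_{k}^{+},\lm_{k+1}^{-})$,
\[
  F(\lm_{n}^{\pm})=\sum_{k=0}^{n-1}\int_{\lm_{k}^{+}}^{\lm_{k+1}^{-}}\om
  =\sum_{k=0}^{n-1}\ii(-1)^{k}\arcsin\frac{\Dl(\lm)}{2}\bigg|_{\lm_{k}^{+}}^{\lm_{k+1}^{-}}
  =-\ii n\pi.
\]
This fixes the constant for every real-type $\ph$ without any deformation.
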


\begin{proof}
The proof of (i) is standard but a bit technical and can be found in appendix~\ref{a:ana}, and claim (ii) follows immediately from the properties of $\om$.

(iii): Note that locally around $[\lm_{n}^{-},\lm_{n}^{+}]$ both $l_{n}$ and $F$ are primitives of $\om$ and hence are identical up to a additive constant which may depend on $\ph$. Clearly, $l_{n}(\lm_{n}^{\pm}) = 0$. On the other hand, since $\int_{\lm_{k}^{-}}^{\lm_{k}^{+}}\om = 0$ for any $k$,
\[
  F(\lm_{n}^{\pm}) = \sum_{k=0}^{n-1} \int_{\lm_{k}^{+}}^{\lm_{k+1}^{-}} \om,\qquad
  F(\lm_{-n}^{\pm}) = \sum_{k=0}^{n-1} \int_{\lm_{-k}^{-}}^{\lm_{-k-1}^{+}} \om,\qquad n > 0.
\]
As $\ii(-1)^{k}\sqrt[c]{\Dl^{2}(\lm)-4} > 0$ for $\lm_{k}^{+} < \lm < \lm_{k+1}^{-}$ -- see \cite[Section 12]{Grebert:2014iq} -- we find
\[
  \int_{\lm_{k}^+}^{\lm_{k+1}^-} \om = 
  \ii(-1)^{k}\int_{\lm_{k}^+}^{\lm_{k+1}^-}
  \frac{\dot\Dl(\lm)}{\sqrt[+]{4-\Dl^2(\lm)}}\,\dlm
   =
  \ii(-1)^{k}
  \arcsin\frac{\Dl(\lm)}{2}\bigg|_{\lm_{k}^+}^{\lm_{k+1}^-}
   =
  -\ii \pi.
\]
Consequently, $F(\lm_{n}^{\pm})  = -\ii n\pi$ and $F-l_{n} \equiv -\ii n\pi$. Finally, $\pm (-1)^{n}\sqrt[c]{\Dl^{2}(\lm\pm \ii 0)-4} > 0$ for $\lm_{n}^{-} < \lm < \lm_{n}^{+}$ hence
\begin{align*}
  F(\lm \pm \ii 0)
  &= \log\frac{1}{2}\left((-1)^{n}\Dl(\lm) \pm \sqrt[+]{\Dl^{2}(\lm)-4}\right) - \ii n\pi\\
  &= \pm f_{n}(\Dl(\lm)/2) - \ii n\pi.
\end{align*}

(iv): At the zero potential, $\om(\lm,0) = -\ii\dlm$ which is evident from the product representation \eqref{om} of $\om$.\qed
\end{proof}

Given $\ph\in W$ we can integrate by parts in \eqref{Jn-om} to obtain
\[
  J_{n,k} =  \frac{1}{k\pi}\int_{\Gm_n}\lm^{k}\om
          = -\frac{1}{\pi}\int_{\Gm_n} \lm^{k-1}F(\lm)\,\dlm.
\]
Clearly, $J_{n,k}$ vanishes if $\gm_{n}=0$ in view of the analyticity of the integrand. Further, provided $\ph$ is of real type, then we may shrink the contour of integration to $[\lm_{n}^{-},\lm_{n}^{+}]$, and use the properties of $F$ to the effect that
\begin{align}
  \label{Jn-fn}
  J_{n,k} =  \frac{2}{\pi} \int_{\lm_{n}^{-}}^{\lm_{n}^{+}} \lm^{k-1} f_{n}(\lm) \,\dlm.
\end{align}
Thus on $L^{2}_{r}$ all actions are real and those on odd levels are nonnegative. Moreover, by the mean value theorem,
\begin{align}
  \label{zt-In-Jn}
  J_{n,2m+1} = \zt_{n,m}^{2m} I_{n},
\end{align}
for some $\zt_{n,m}\in [\lm_{n}^{-},\lm_{n}^{+}]$. Recall that $\lm_{n}^{\pm}\sim n\pi$ hence for any $m\ge 0$ we have $\zt_{n,m}^{2m} \sim (n\pi)^{2m}$ asymptotically as $\abs{n}\to\infty$. A quantitative estimate of the high level actions $J_{n,2m+1}$ in terms of the actions $I_{n}$ will be obtained in the next section.

Finally, consider the case of a potential with only finitely many open gaps. Then $F$ is analytic outside some sufficiently large circle and thus admits a Laurent expansion around zero. The coefficients of this expansion turn out to be the Hamiltonians of the \nls-hierarchy.

\begin{lemma}
\label{F-asy}
For any finite gap potential $\ph\in  W$ there exists $\Lm > 0$ such that
\[
  F(\lm,\ph)
   = -\ii \lm - \frac{\Hc_{1}(\ph)}{2\ii \lm} + \sum_{n\ge 2} \frac{\Hc_{n}(\ph)}{(2\ii\lm)^{n}},
   \qquad \abs{\lm} > \Lm.
\]
\end{lemma}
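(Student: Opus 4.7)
\emph{Plan.} Since $\ph$ is a finite gap potential, only finitely many of the gap lengths $\gm_{n}$ are nonzero, so I choose $\Lm>0$ such that every nondegenerate interval $[\lm_{n}^{-},\lm_{n}^{+}]$ lies in $\{\abs{\lm}<\Lm\}$. Lemma~\ref{F-prop}(ii) then gives that $F(\cdot,\ph)$ is a single-valued analytic function on the annulus $\{\abs{\lm}>\Lm\}$ and so admits a Laurent expansion of the form
\[
  F(\lm,\ph) = -\ii\lm + b_{0} + \sum_{k\ge 1} \frac{b_{k}}{\lm^{k}}.
\]
The linear leading term is read off from the product representation \eqref{om} of $\om$: using $\lm_{m}^{\ld}\sim m\pi$, $\tau_{m}\sim m\pi$, and the summability of $\gm_{m}\in\ell^{2}_{n}$, each factor $(\lm_{m}^{\ld}-\lm)/\vs_{m}(\lm)$ tends to $1$ uniformly as $\abs{\lm}\to\infty$, whence $\om/\dlm\to -\ii$ and $F(\lm)=-\ii\lm+O(1)$.

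Next I identify $b_{k}$ for $k\ge 1$ by combining residue calculus with the trace formulae. Integrating the Laurent expansion term-wise over a circle $\{\abs{\lm}=R\}$ with $R>\Lm$ gives
\[
  \oint_{\abs{\lm}=R} \lm^{k-1}F(\lm)\,\dlm = 2\pi\ii\,b_{k},
\]
while analyticity of $F(\cdot,\ph)$ outside the open gaps allows the contour to be deformed inward to finitely many small circuits $\Gm_{n}$ encircling each such gap, producing
\[
  \oint_{\abs{\lm}=R} \lm^{k-1}F(\lm)\,\dlm = \sum_{n}\oint_{\Gm_{n}}\lm^{k-1}F(\lm)\,\dlm = -\pi \sum_{n}J_{n,k}(\ph),
\]
by the integration-by-parts identity $J_{n,k} = -\frac{1}{\pi}\oint_{\Gm_{n}}\lm^{k-1}F(\lm)\,\dlm$ given just after Lemma~\ref{F-prop}. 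Thus $b_{k} = -\tfrac{1}{2\ii}\sum_{n}J_{n,k}(\ph)$, and the trace formulae \eqref{tf-1} and \eqref{tf-k} yield $b_{1}=-\Hc_{1}(\ph)/(2\ii)$ and $b_{k}=\Hc_{k}(\ph)/(2\ii)^{k}$ for $k\ge 2$, matching the claimed expansion.

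The remaining point $b_{0}=0$ cannot be obtained from the residue/trace-formula scheme, and is the main obstacle. To handle it I would exponentiate the local identity $F(\lm)=l_{n}(\lm)-\ii n\pi$ from Lemma~\ref{F-prop}(iii) to the form $\e^{F(\lm)}=\tfrac{1}{2}(\Dl(\lm)+\sqrt[c]{\Dl^{2}(\lm)-4})$. Both sides are single-valued analytic functions on $\{\abs{\lm}>\Lm\}$ (the branches of $\sqrt[c]{\Dl^{2}-4}$ and of the logarithm in $F$ are compatible on the annulus), so by analytic continuation the identity holds throughout the annulus. Using the classical asymptotic $\Dl(\lm)=2\cos\lm+O(\lm^{-1})$ of the Zakharov-Shabat discriminant together with the branch of the $c$-root selected by the asymptotics of $F$, one finds $\tfrac{1}{2}(\Dl(\lm)+\sqrt[c]{\Dl^{2}(\lm)-4}) = \e^{-\ii\lm}(1+O(\lm^{-1}))$ at infinity. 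Taking the logarithm on this asymptote gives $F(\lm)=-\ii\lm+O(\lm^{-1})$, and in particular $b_{0}=0$.

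The main difficulty in carrying out this plan is the $b_{0}=0$ step, since neither the trace formula nor a residue computation applies directly at level zero; it requires the separate asymptotic analysis of the discriminant above. Once $b_{0}=0$ is established, the remaining coefficients are pinned down by the residue computation and the trace formulae as above, producing the full Laurent expansion in the exact form stated.
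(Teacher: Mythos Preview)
Your approach differs substantially from the paper's. The paper identifies $F$ directly with $\cosh^{-1}(\Dl/2)$: both are primitives of $\om$, hence differ by an additive constant $c(\ph)$; this constant is pinned to zero by evaluating at $\lm_0^\pm$ and then at the zero potential (using analyticity of $c$ on $W$); the claim then follows from the known asymptotic expansion of $\cosh^{-1}(\Dl/2)$ cited from \cite{Kappeler:4WN-jiH9}, which already carries the Hamiltonians $\Hc_{n}$ as coefficients. No trace formula enters. Your route---residue calculus plus the trace formulae \eqref{tf-1}, \eqref{tf-k} to pin down $b_k$ for $k\ge1$---reverses the usual order of deduction, since the standard derivation of the trace formula proceeds \emph{from} the Laurent expansion of $F$ (equivalently of $\cosh^{-1}(\Dl/2)$). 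Within this paper the trace formula is cited as a black box, so there is no formal circularity, but you should be aware that your argument does not furnish an independent proof of the expansion.

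There is also a genuine gap in your handling of $b_0=0$. You invoke Lemma~\ref{F-prop}(iii) to write $e^{F}=\tfrac12(\Dl+\sqrt[c]{\Dl^2-4})$ near a gap and then continue analytically in $\lm$ over the annulus; but Lemma~\ref{F-prop}(iii) is stated only for real-type $\ph\in L^{2}_{r}$, whereas Lemma~\ref{F-asy} concerns all finite-gap $\ph\in W$. Your argument therefore establishes $b_0=0$ only on the real slice. Closing this requires an analyticity-in-$\ph$ step (showing $b_0(\ph)$ is analytic on $W$ and vanishes on $L^{2}_{r}$, or deforming to the zero potential), which is exactly the device the paper employs---at which point your $b_0$ step has merged with the paper's method anyway. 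A minor point: the inference ``$\om/\dlm\to-\ii$ hence $F(\lm)=-\ii\lm+O(1)$'' tacitly needs the $1/\lm$-coefficient of $\om/\dlm$ to vanish; this follows from $\oint_{\abs{\lm}=R}\om=0$ via Lemma~\ref{w-closed}(iii), not from the pointwise limit alone.
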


\begin{proof}
We attribute the claim to the asymptotic expansion of $\cosh^{-1}(\Dl/2)$ along the real axis -- see \cite{Grebert:2014iq,Kappeler:4WN-jiH9}. By the basic estimates for the discriminant
\[
  \Dl(\ii \tau,\ph) = 2\cosh\tau + o(\e^{\tau}),\qquad \tau\to\infty.
\]
Hence on an open neighbourhood $U$ of $\ii[\tau_{0},\infty)$ for $\tau_{0} > 0$ sufficiently large $\cosh^{-1}(\Dl(\lm,\ph)/2)$ is well defined. Here $\cosh^{-1}$ denotes the principal branch of the inverse of $\cosh$ which is defined on $\C\setminus(-\infty,1)$ and real valued on $[1,\infty)$. On this neighbourhood $U$, the $\lm$-derivatives of $\cosh^{-1}(\Dl/2)$ and $F$ coincide except for possibly the sign of
\[
  \Re \sqrt[c]{\Dl^{2}(\ii\tau)-4},\qquad \tau\ge \tau_{0}.
\]
This sign is locally constant in $\ph$ provided $\tau\ge \tau_{0}$ and, as the straight line $\setdef{t\ph}{0\le t\le 1}$ is compact in $L^{2}_{c}$, it can be determined by deforming $\ph$ to the zero potential. With
\[
  \sqrt[c]{\Dl^{2}(\ii\tau,0)-4} = 2\ii\prod_{m\in\Z} \frac{\vs_{m}(\ii\tau,0)}{\pi_{m}}
   = 2\ii\prod_{m\in\Z} \frac{m\pi - \ii\tau}{\pi_{m}} = 2\sinh \tau,
\]
we conclude the sign is positive on $U$, and consequently
\[
  \cosh^{-1}\frac{\Dl(\lm,\ph)}{2} = F(\lm,\ph) + c(\ph),\qquad \lm\in U,
\]
with an analytic function $c\colon  W\to \C$. For $\ph\in W$ fixed, the right hand side is analytic on $\C\setminus\bigcup_{\gm_{n}\neq 0}[\lm_{n}^{-},\lm_{n}^{+}]$ and continuous on $\C\setminus\bigcup_{\gm_{n}\neq 0}(\lm_{n}^{-},\lm_{n}^{+})$, hence the left hand side extends uniquely from $U$ onto the same domain. Moreover, $F$ vanishes at $\lm_{0}^{\pm}$, while
\[
  \cosh^{-1}\frac{\Dl(\lm_{0}^{\pm},\ph)}{2} \in \setdef{\ii m\pi}{m\in\Z}.
\]
Thus $c(\ph) = \ii m\pi$, and since $c$ is continuous, this $m$ is fixed on all of $ W$. Evaluating at the zero potential reveals $\cosh^{-1}(\Dl(\lm,0)/2) = -\ii \lm = F(\lm,0)$ hence $c \equiv 0$.

If $\ph$ is a finite gap potential, then $\cosh^{-1}(\Dl/2) = F$ is analytic outside a sufficiently large circle enclosing all open gaps, and has the following asymptotic expansion along the real axis \cite{Kappeler:4WN-jiH9}
\[
  \cosh^{-1}\frac{\Dl(\lm,\ph)}{2} = -\ii \lm - \frac{\Hc_{1}(\ph)}{2\ii \lm} + \sum_{n\ge 2} \frac{\Hc_{n}(\ph)}{(2\ii\lm)^{n}},
   \qquad \lm \to \pm\infty.
\]
This proves the claim.\qed
\end{proof}

\section{Localising the Zakharov-Shabat Spectrum}
\label{s:trap-sp}

The goal for this section is to provide a sufficiently accurate localisation of the spectrum of the Zakharov-Shabat operator
\[
  L(\ph) =
  \mat[\bigg]{\,\ii & \\  & -\ii\,} 
  \frac{\ddd}{\dx} +
  \mat[\bigg]{ & \phm \\ \php & },
\]
allowing to quantify the asymptotic relation $J_{n,2m+1} \sim (n\pi)^{2m} I_{n}$. Since one can translate the spectrum of $\ph$ without changing the $L^2$-norm, one can not obtain a uniform localisation on bounded subsets of $L^{2}_{c}$. Instead, we have to impose some regularity on $\ph$.

\begin{theorem}
\label{ev-trap}
Suppose $\ph\in H_c^1$, then for each $\lin{n}\ge 8\n{\ph}_1^2$,
\[
  \abs{\lm_{n}^{\pm}-n\pi} \le
  \frac{\n{\ph}_{1}^{2}}{\lin{n}}
   + \frac{\sqrt{2}\n{\ph}_{1}}{\lin{2n}} \le \pi/5,
\]
while the remaining eigenvalues are contained in the box
\[
  \setdef*{\lm\in\C}{\abs{\Re \lm} \le (8\n{\ph}_1^2-1/2)\pi,\quad
  \abs{\Im \lm} \le \n{\ph}_{1}}.
\]
\end{theorem}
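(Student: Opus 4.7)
The plan is a Rouché counting argument applied to $\Dl^{2}(\lm,\ph)-4$, whose zeros are precisely the periodic spectrum, compared with its free counterpart $\Dl^{2}(\lm,0)-4=-4\sin^{2}\lm$ (a double zero at every $n\pi$). On each disc $D_{n}\defl\setd{\abs{\lm-n\pi}\le r_{n}}$ with $r_{n}\defl\n{\ph}_{1}^{2}/\lin{n}+\sqrt{2}\n{\ph}_{1}/\lin{2n}$, the aim is to show that $\Dl^{2}-\Dl_{0}^{2}$ is strictly dominated by $-4\sin^{2}\lm$ on $\partial D_{n}$, so Rouché delivers exactly two eigenvalues per disc, which must be $\lm_{n}^{\pm}$. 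The hypothesis $\lin{n}\ge 8\n{\ph}_{1}^{2}$ will be seen to be calibrated so these discs are disjoint, satisfy $r_{n}\le\pi/5$, and exhaust the spectrum outside the strip $\setd{\abs{\Re\lm}\le(8\n{\ph}_{1}^{2}-1/2)\pi}$.

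The analytic ingredient is a sharp second-order expansion of $\Dl$ via Picard iteration of the fundamental solution of $L(\ph)M=\lm M$ around the free evolution $E_{0}(x,\lm)=\mathrm{diag}(\e^{-\ii\lm x},\e^{\ii\lm x})$. Taking the trace at $x=1$, the linear-in-$\ph$ contribution vanishes because $V=L(\ph)-L(0)$ is off-diagonal, leaving
\[
  \Dl(\lm,\ph)=2\cos\lm+\delta_{2}(\lm,\ph)+\delta_{\ge 3}(\lm,\ph),
\]
where $\delta_{2}$ is an iterated oscillatory integral in $(\phm,\php)$ with phase $\e^{\pm 2\ii\lm(s_{2}-s_{1})}$. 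The crucial observation is that at the resonant value $\lm=n\pi$ the phases become characters and the double integral collapses to a bilinear expression in the single Fourier pair $(\ph_{2n}^{-},\ph_{2n}^{+})$, up to a non-resonant remainder. By the Sobolev estimate $(\abs{\ph_{2n}^{-}}^{2}+\abs{\ph_{2n}^{+}}^{2})^{1/2}\le\n{\ph}_{1}/\lin{2n\pi}$ the resonant part is bounded by a constant multiple of $(\n{\ph}_{1}/\lin{2n})^{2}$. One integration by parts in the $s_{j}$ variables extracts a factor $1/\lin{n}$ from the non-resonant remainder and from $\delta_{\ge 3}$, yielding $\abs{\Dl-\Dl_{0}}\les\n{\ph}_{1}^{2}/\lin{n}^{2}+(\n{\ph}_{1}/\lin{2n})^{2}$ uniformly on $D_{n}$.

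With this expansion, Rouché's hypothesis on $\partial D_{n}$ reduces to $\abs{\Dl-\Dl_{0}}<r_{n}^{2}(1-O(r_{n}^{2}))$, using $\abs{\sin\lm}\ge r_{n}(1-r_{n}^{2}/6)$ on the circle, and is satisfied precisely when $\lin{n}\ge 8\n{\ph}_{1}^{2}$. A direct numerical check shows the same hypothesis yields $r_{n}\le\pi/5$ and disjoint discs. For the eigenvalues not captured by any $D_{n}$ — necessarily confined to the real-part strip — the imaginary-part bound follows from a numerical-range argument: if $L(\ph)\psi=\lm\psi$ with $\nn{\psi}=1$, then since $L_{0}$ is self-adjoint, $\Im\lm=\Im\lin{V\psi,\psi}$, whence $\abs{\Im\lm}\le\frac{1}{2}\n{V-V^{*}}_{\mathrm{op}}\le\n{\ph}_{\infty}\le\n{\ph}_{1}$ by Sobolev embedding on the circle.

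The main obstacle is the sharp bookkeeping in the expansion of $\Dl$: to obtain the specific two-term radius $r_{n}$ — rather than the cruder $O(\n{\ph}_{1}/\sqrt{\lin{n}})$ that naive Rouché on $\Dl\mp 2(-1)^{n}$ would yield — one must track the resonant contribution to $\delta_{2}$ separately from the non-resonant and cubic-and-higher errors, exploiting that at $\lm=n\pi$ the double oscillatory integral reduces to a bilinear form in the single Fourier mode $(\ph_{2n}^{-},\ph_{2n}^{+})$. Controlling the $\lm$-dependence of $\delta_{2}$ on $D_{n}$ by Taylor-expanding around $n\pi$ is then the remaining (tedious but routine) calculation.
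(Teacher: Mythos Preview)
Your approach is genuinely different from the paper's. The paper does \emph{not} apply Rouch\'e to $\Dl^{2}-4$ via a Picard expansion of the discriminant; instead it performs a Lyapunov--Schmidt reduction of the eigenvalue equation $A_{\lm}f=\Phi f$ onto the two-dimensional space $\Pc_{n}=\operatorname{sp}\setd{e_{n}^{+},e_{n}^{-}}$. After showing that $T_{n}^{2}=(\Phi A_{\lm}^{-1}Q_{n})^{2}$ is a $1/2$-contraction in an $n$-shifted $H^{1}$-norm precisely when $\lin{n}\ge 8\n{\ph}_{1}^{2}$, the eigenvalue condition on $\Uf_{n}$ becomes $\det S_{n}=0$ with $S_{n}=(\lm-n\pi-a_{n})I-\bigl(\begin{smallmatrix}0&b_{n}^{+}\\b_{n}^{-}&0\end{smallmatrix}\bigr)$, and the precise two-term radius $r_{n}=\abs{a_{n}}_{\Uf_{n}}+\abs{b_{n}^{+}b_{n}^{-}}_{\Uf_{n}}^{1/2}$ drops out of the explicit coefficient bounds $\abs{a_{n}}\le\n{\ph}_{1}^{2}/\lin{n}$ and $\lin{2n}\abs{b_{n}^{\pm}}\le 2\n{\ph_{\pm}}_{1}$. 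Rouch\'e is then applied to $\det S_{n}$, not to $\Dl^{2}-4$. The imaginary-part bound is proved exactly as you outline.

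There is a real obstacle in your route that you underplay. The perturbation $\Dl-\Dl_{0}$ near $n\pi$ has a nontrivial \emph{linear} term in $\lm-n\pi$, coming from the common shift of both eigenvalues by $a_{n}$; in the paper's language, $\Dl-\Dl_{0}\approx(-1)^{n}\bigl[(b_{n}^{+}b_{n}^{-}-a_{n}^{2})+2a_{n}(\lm-n\pi)\bigr]$ to leading order. On the circle $\abs{\lm-n\pi}=r_{n}$ this linear term contributes $2\abs{a_{n}}r_{n}$, which when added to the constant term gives an upper bound of roughly $3\abs{a_{n}}^{2}+2\abs{a_{n}}\abs{\sigma_{n}}+\abs{\sigma_{n}}^{2}$, exceeding $r_{n}^{2}=\abs{a_{n}}^{2}+2\abs{a_{n}}\abs{\sigma_{n}}+\abs{\sigma_{n}}^{2}$. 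So na\"ive Rouch\'e against $-4\sin^{2}\lm$ centred at $n\pi$ will yield a radius of the correct order but with an inflated constant, not the sharp $r_{n}$ of the statement. The Lyapunov--Schmidt reduction avoids this precisely because it identifies the centre $n\pi+a_{n}$ before applying Rouch\'e; your sketch does not indicate how you would recover that recentring from the discriminant alone. Your claimed bound $\abs{\Dl-\Dl_{0}}\lesssim\n{\ph}_{1}^{2}/\lin{n}^{2}+(\n{\ph}_{1}/\lin{2n})^{2}$ on $D_{n}$ is therefore too optimistic as stated: the shift contribution is of size $\abs{a_{n}}r_{n}\sim\n{\ph}_{1}^{2}r_{n}/\lin{n}$, not $\n{\ph}_{1}^{2}/\lin{n}^{2}$.
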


\emph{Remark.}
In \cite{Li:1994td} Li \& McLaughlin obtained a localisation for $\ph$ in $H^{1}_{c}$ where the bound on the threshold of $\lin{n}$ is exponential in the norm of $\ph$. With a focus on lowering the regularity assumptions on $\ph$ rather than improving the threshold for $\ph$ smooth, this result was gradually improved by several authors -- see e.g. Mityagin~\cite{Mityagin:2004wn} and the references therein. The novelty of Theorem~\ref{ev-trap} consists in providing a threshold for $\lin{n}$ which is quadratic in the norm of $\ph$.

The proof is based on a \emph{Lyapunov-Schmidt decomposition} introduced by Kappeler \& Mityagin~\cite{Kappeler:1999er} -- see also \cite{Grebert:1998cz,Grebert:2001vm}: For the zero potential each $n\pi$, $n\in\Z$, is a double eigenvalue of $L$ with eigenfunctions $e_n^+ \defl (0,\e^{\ii n\pi x})$ and $e_n^- \defl (\e^{-\ii n\pi x},0)$. Thus, for a nonzero potential, provided $\abs{n}$ is sufficiently large, we expect exactly two eigenvalues which are close to $n\pi$ and whose eigenfunctions are close to the linear span of $e_{n}^{+}$ and $e_{n}^{-}$. This suggest to separate these modes from the others by a Lyapunov-Schmidt reduction.

To set the stage, we cover the complex plane with the closed strips
\[
  \Uf_n \defl \setdef{\lm}{\abs{\Re \lm-n\pi}\le \pi/2},
\]
and consider the Hilbert space of complex 2-periodic $L^2$-functions
\[
  L^{2}_{*}
   = \Pc_n\oplus \Qc_n
   = \operatorname{sp}\setd{e_n^+,e_n^-} \oplus
     \ob{\operatorname{sp}}\setdef{e_k^+,e_k^-}{k\neq n}.
\]
The orthogonal projections onto $\Pc_n$ and $\Qc_n$ are denoted by $P_n$ and $Q_n$, respectively.

To proceed, write the eigenvalue equation $Lf = \lm f$ in the form
\[
  A_\lm f = \Phi f,
\]
with the unbounded operators
\[
  A_\lm \defl \lm - \mat[\bigg]{\,\ii & \\  & -\ii\,}\ddx,
   \qquad \Phi \defl \mat[\bigg]{ & \phm\\ \php & }.
\]
Since $A_\lm$ leaves the spaces $\Pc_n$ and $\Qc_n$ invariant, by writing
\[
  f = u + v = P_nf + Q_nf,
\]
we can decompose the equation $A_\lm f = \Phi f$ into the two equations
\[
  A_\lm u = P_n\Phi(u+v),\qquad
  A_\lm v = Q_n \Phi(u+v),
\]
called the $P$- and the $Q$-equation, respectively.

We first consider the $Q$-equation on $\Uf_n$. One checks that for $m\neq n$
\[
  \min_{\lm\in \Uf_n} \abs{\lm-m\pi} \ge \abs{n-m} \ge 1,
\]
hence it follows with $A_\lm e_m^\pm = (\lm-m\pi)e_m^\pm$ that the restriction of $A_\lm$ to $\Qc_n$ is boundedly invertible for all $\lm\in \Uf_n$. Therefore, multiplying the $Q$-equation from the left by $\Phi A_\lm^{-1}$, gives
\[
  \Phi v = T_n\Phi(u+v),
\]
with $T_n \defl \Phi A_\lm^{-1} Q_n$. The latter identity may be written as
\[
  (I - T_n)\Phi v = T_n\Phi u,
\]
hence solving the $Q$ equation amounts to inverting $(I-T_{n})$.

We consider operator norms induced by \emph{shifted weighted norms} \cite{Kappeler:2001hsa,Poschel:2011iua}. Let $H_{i}^{w}$ denote the space of complex 2-periodic functions $u=\sum_{m\in\Z} u_{m}\e_{m}$ equipped with the $i$-shifted $H^w$-norm given by
\[
\n{u}_{w;i}^2
 \defl \n{u\e_i}_{w}^2 
 = \sum_{m\in\Z} w_{m+i}^{2} \abs{u_m}^2,
 \qquad \e_{m}(x) \defl \e^{\ii m\pi x}.
\]
On the space $H_{i,c}^w \defl H_{i}^{w}\times H_{i}^{w}$ of 2-periodic functions with values in $\C^{2}$,
\[
  f=(f_-,f_+) = \sum_{n\in\Z} (f^{-}_{n}e_{n}^{-} + f^{+}_{n}e_{n}^{+}) = 
  \sum_{n\in\Z} (f^{-}_{n}\e_{-n}^{-},f^{+}_{n}\e_{n}),
\]
the $i$-shifted norm is defined by
\[
  \n{f}_{w;i}^2
   \defl \n{f_-}_{w;-i}^2 + \n{f_+}_{w;i}^2 
     = \sum_{m\in\Z} w_{m+i}^{2} \paren[\big]{\abs{f_m^-}^2 + \abs{f_m^+}^2}.
\]

\begin{lemma}
\label{Tn-est}
If $\ph\in H_c^w$ with $w\in\Mc$, then for any $n,i\in\Z$ and any $\lm\in \Uf_n$,
\[
  T_n = \Phi A_\lm^{-1}Q_n\colon H_{i,c}^{w}\to H_{-i,c}^{w}
\]
is bounded and satisfies the estimate
\[
  \n{T_n f}_{w;i} \le 2\n{\ph}_w\n{f}_{w;- i}.
\]
\end{lemma}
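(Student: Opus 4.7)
The argument is a direct Fourier-series computation on the basis $\setd{e_m^{-},e_m^{+}}_{m\in\Z}$ of $L^2_{*}$, combining the diagonal action of $A_{\lm}$ with a convolution estimate for $\Phi$ and the submultiplicativity of $w$. First I would write out the Fourier coefficients of $T_nf=\Phi A_{\lm}^{-1}Q_nf$. Since $A_{\lm}e_m^{\pm}=(\lm-m\pi)e_m^{\pm}$, the resolvent $A_{\lm}^{-1}Q_n$ acts diagonally with eigenvalues $(\lm-m\pi)^{-1}$ for $m\neq n$, and left-multiplication by $\Phi$ convolves the Fourier coefficients of $\phm,\php$ with those of $A_{\lm}^{-1}Q_nf$. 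Writing $T_nf = (\sum_j \hat\al_j\e_j,\sum_j \hat\bt_j\e_j)$, a short calculation gives
\[
\hat\bt_j=\sum_{m\neq n}\frac{\hat\php_{j+m}\,f_m^{-}}{\lm-m\pi},\qquad \hat\al_j=\sum_{m\neq n}\frac{\hat\phm_{j-m}\,f_m^{+}}{\lm-m\pi}.
\]

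The quantitative input is the quadratic decay of $(\lm-m\pi)^{-1}$ on $\Uf_n$: since $\abs{\lm-m\pi}\ge\abs{n-m}\ge 1$ for all $m\neq n$ and $\lm\in\Uf_n$, one has
\[
\sum_{m\neq n}\abs{\lm-m\pi}^{-2}\le 2\sum_{k\ge 1}k^{-2}=\pi^{2}/3\le 4.
\]
Cauchy--Schwarz in $m$ with this factor separated out yields $\abs{\hat\bt_j}^{2}\le 4\sum_m\abs{\hat\php_{j+m}}^{2}\abs{f_m^{-}}^{2}$, and the analogous bound holds for $\abs{\hat\al_j}^{2}$.

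To pull in the weight I use submultiplicativity and symmetry of $w$: from $j+i=(j+m)+(i-m)$ and $w_{-k}=w_{k}$ one has $w_{j+i}\le w_{j+m}\,w_{m-i}$. Multiplying by $w_{j+i}^{2}$, summing in $j$ and substituting $q=j+m$ decouples the double sum into a product,
\[
\n{(T_nf)_{+}}_{w;i}^{2}\le 4\n{\php}_{w}^{2}\sum_m w_{m-i}^{2}\abs{f_m^{-}}^{2}=4\n{\php}_{w}^{2}\,\n{f_{-}}_{w;i}^{2},
\]
where the last identity uses $f_{-}=\sum_m f_m^{-}\e_{-m}$ together with $w_{-m+i}=w_{m-i}$. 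A symmetric argument gives $\n{(T_nf)_{-}}_{w;-i}^{2}\le 4\n{\phm}_{w}^{2}\n{f_{+}}_{w;-i}^{2}$. Summing the two contributions and using $\n{\phm}_{w}^{2}+\n{\php}_{w}^{2}=\n{\ph}_{w}^{2}$, $\n{f_{-}}_{w;i}^{2}+\n{f_{+}}_{w;-i}^{2}=\n{f}_{w;-i}^{2}$, together with the elementary inequality $a^{2}b+c^{2}d\le(a^{2}+c^{2})(b+d)$, yields $\n{T_nf}_{w;i}^{2}\le 4\n{\ph}_{w}^{2}\n{f}_{w;-i}^{2}$ as claimed. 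The only delicate point is book-keeping of the shift conventions $\pm i$ introduced by the fact that $f_{-}$ is indexed by $\e_{-m}$ while $f_{+}$ is indexed by $\e_m$; once this is set up correctly, the estimate is a routine weighted-convolution argument.
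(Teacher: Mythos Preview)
Your argument is correct and follows essentially the same route as the paper's proof: both rely on the bound $\sum_{m\neq n}|\lm-m\pi|^{-2}<4$ on $\Uf_n$, Cauchy--Schwarz, and the submultiplicativity of $w$ to handle the convolution with $\ph_\pm$. The only cosmetic difference is that the paper first records a weighted $\ell^1$-bound for $g=A_\lm^{-1}Q_nf$, namely $\n{g_\pm\e_{\mp i}}_{\ell_w^1}\le 2\n{f_\pm}_{w;\mp i}$, and then invokes the standard convolution inequality $\n{\ph_\mp g_\pm\e_{\mp i}}_w\le \n{\ph_\mp}_w\n{g_\pm\e_{\mp i}}_{\ell_w^1}$, whereas you unfold this into an explicit double sum over Fourier coefficients; the computations are identical once unpacked.
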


\begin{proof}
Write $T_n f = \Phi g$ with $g=A_\lm^{-1}Q_nf$. Since the restriction of $A_\lm$ to $\Qc_n$ is boundedly invertible, the function
\[
  g
   = A_\lm^{-1}Q_nf
   = \sum_{m\neq n} \biggl(\frac{f_m^-}{\lm-m\pi}e_m^- + \frac{f_m^+}{\lm-m\pi}e_m^+ \biggr)
   = (g_-,g_+)
\]
is well defined. By Hölder's inequality we obtain for the weighted $\ell^1$-norm
\[
  \n{g_+\e_{-i}}_{\ell^1_w} 
  =
  \sum_{m\neq n} \frac{w_{m-i}\abs{f_m^+}}{\abs{\lm-m\pi}}
  \le \biggl(\sum_{m\neq n} \frac{1}{\abs{n-m}^2}\biggr)^{1/2}
      \n{f_+}_{w;-i}
  < 2\n{f_+}_{w;-i},
\]
uniformly for $\lm\in \Uf_n$; similarly $\n{g_-\e_{i}}_{\ell^1_w} \le 2\n{f_-}_{w;i}$. Since
\[
  \n{T_nf}_{w;i}^2
   = \n{\Phi g}_{w;i}^2
   = \n{\phm g_+\e_{-i}}_w^2+\n{\php g_-\e_i}_w^2,
\]
we can use standard inequalities for the convolution of sequences to obtain
\[
  \n{T_nf}_{w;i}^2
   \le \n{\ph}^2_w\left(\n{g_+\e_{-i}}_{\ell^1_w}^2+\n{g_-\e_i}_{\ell^1_w}^2\right)
   \le 4\n{\ph}_w^2\n{f}_{w;-i}^2.\qed
\]
\end{proof}

Note that $T_{n}f$ is estimated with a shifted $H^{w}$-norm where the sign of the shift is opposite to the sign of the shifted $H^{w}$-norm of $f$. This fact will be crucial in the following. In particular, $T_{n}^{2}$ is bounded  on $H_{i,c}^{w}$ and it turns out that $\n{T_n^2}_{w;n}=o(1)$ as $\abs{n}\to~\infty$. Using a Neumann series, we then obtain the bounded invertibility of $(I-T_n)$ for $\abs{n}$ sufficiently large, which solves the $Q$-equation. To exploit the regularity assumption $\ph\in H^{1}_{c}$ of Theorem~\ref{ev-trap}, we restrict ourselves to the subclass $\Mc^{1}$ of weights which have at least a linearly growing factor -- see also \cite{Djakov:2006ba,Grebert:2001vm} for weights with factors $\lin{n}^{\dl}$, $0 < \dl < 1/2$.

\begin{lemma}
\label{Tn2-est}
If $\ph\in H_c^w$ with $w\in\Mc^{1}$, then for any $n\in\Z$ and any $\lm\in \Uf_n$,
\[
  \n{T_n^2}_{w;n} \le \frac{4}{\lin{n}}\n{\ph}_w^2.
\]
\end{lemma}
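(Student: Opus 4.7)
The plan is to extend the proof of Lemma~\ref{Tn-est} and extract an additional factor $\lin{n}^{-1}$ from the $\lin{\cdot}$ component of the weight $w=\lin{\cdot}\,v\in\Mc^{1}$. I would start exactly as in Lemma~\ref{Tn-est}: write $T_n^2 f = \Phi g$ with $g = A_\lambda^{-1}Q_n T_n f$ and apply Young's inequality to obtain
\[
  \n{T_n^2 f}_{w;n}^2 \le \n{\phm}_w^2\n{g_+\e_{-n}}_{\ell^1_w}^2 + \n{\php}_w^2\n{g_-\e_n}_{\ell^1_w}^2.
\]
By symmetry it suffices to estimate $\n{g_+\e_{-n}}_{\ell^1_w}$. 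Expanding $(T_n f)_r^+=\sum_{s\ne n}\php_{r+s}f_s^-/(\lambda-s\pi)$, using $|\lambda-r\pi|\ge \pi|n-r|/2$ and $\lin{r-n}/|n-r|\le 2$ for $r\ne n$, together with $w_{r-n}=\lin{r-n}v_{r-n}$, gives $w_{r-n}/|\lambda-r\pi|\le 4v_{r-n}/\pi$. Submultiplicativity then yields $v_{r-n}\le v_{r+s}v_{n+s}$, so after swapping the order of summation
\[
  \n{g_+\e_{-n}}_{\ell^1_w}
   \le \frac{4}{\pi}\sum_{s\ne n}\frac{v_{n+s}|f_s^-|}{|\lambda-s\pi|}
      \sum_{r\ne n}v_{r+s}|\php_{r+s}|.
\]

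For the inner sum, substituting $j=r+s$ and using $v_j/w_j=1/\lin{j}$, a Cauchy--Schwarz with $\sum_j\lin{j}^{-2}<3$ yields $\sum_{r}v_{r+s}|\php_{r+s}|\le \sqrt3\,\n{\php}_w$. For the outer sum, $|\lambda-s\pi|\ge\pi|n-s|/2$ for $s\ne n$ together with $v_{n+s}/w_{n+s}=1/\lin{n+s}$ gives
\[
  \sum_{s\ne n}\frac{v_{n+s}|f_s^-|}{|\lambda-s\pi|}
   \le \frac{2}{\pi}\sum_{s\ne n}\frac{1}{\lin{n+s}|n-s|}\,w_{n+s}|f_s^-|,
\]
and a Cauchy--Schwarz in $s$ then reduces the task to bounding $\sum_{s\ne n}1/(\lin{n+s}^2|n-s|^2)$.

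The main obstacle, and the source of the $\lin{n}^{-1}$ improvement, is the elementary inequality
\[
  \lin{n+s}\,|n-s|\ge \lin{n}/2\qquad (s\ne n).
\]
This follows from $|n-s|+|n+s|\ge 2|n|$: at least one of $|n+s|,|n-s|$ is $\ge|n|$, and in either case the product $\lin{n+s}|n-s|$ exceeds $\lin{n}/2$. Combined with the trivial bound $\lin{n+s}^2|n-s|^2\ge |n-s|^2$, the sum can be split into $|s|\le |n|$ (where $\lin{n+s}\ge\lin{n}$) and $|s|>|n|$ (where $|n-s|$ already controls the decay) to yield
\[
  \sum_{s\ne n}\frac{1}{\lin{n+s}^2|n-s|^2}\le \frac{C}{\lin{n}^2}
\]
for an absolute constant~$C$.

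Putting these estimates together one arrives at $\n{g_+\e_{-n}}_{\ell^1_w}\le (C'/\lin{n})\,\n{\php}_w\n{f}_{w;n}$, and by symmetry the same bound holds for $\n{g_-\e_n}_{\ell^1_w}$ with $\phm$ and $\php$ interchanged. Substituting into the Young estimate and using $\n{\phm}_w\n{\php}_w\le \frac{1}{2}\n{\ph}_w^2$ then yields $\n{T_n^2 f}_{w;n}\le (4/\lin{n})\n{\ph}_w^2\n{f}_{w;n}$ once the constants are optimized.
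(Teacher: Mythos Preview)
Your approach is essentially the paper's: write $T_n^2 f=\Phi g$, bound $\n{\Phi g}_{w;n}$ via the $\ell^1_w$-norms of $g_\pm$ with shifts $\mp n$, and exploit $w=\lin{\cdot}v$ with $v$ submultiplicative to extract the factor $1/\lin{n}$. The paper does a single Cauchy--Schwarz on the double sum and reduces everything to the two elementary bounds
\[
  \sum_{k\neq n}\frac{\lin{k-n}^{2}}{\lin{k+l}^{2}|n-k|^{2}}\le \tfrac{32}{5},
  \qquad
  \sum_{l\neq n}\frac{1}{\lin{l+n}^{2}|n-l|^{2}}\le \tfrac{5/2}{\lin{n}^{2}},
\]
whose product gives exactly $16/\lin{n}^{2}$; your sequential treatment (first the $r$-sum, then the $s$-sum) is a rearrangement of the same computation and leads to the same key sum $\sum_{s\neq n}\lin{n+s}^{-2}|n-s|^{-2}$.

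There is one slip in your sketch of that key sum. The splitting ``$|s|\le|n|$ (where $\lin{n+s}\ge\lin{n}$) versus $|s|>|n|$'' does not deliver the parenthetical claims: for $s=-n$ one has $|s|\le|n|$ but $\lin{n+s}=1$, and for $s=n+1$ (with $n\ge 0$) one has $|s|>|n|$ yet $|n-s|=1$. The repair is immediate and already implicit in your inequality $|n-s|+|n+s|\ge 2|n|$: split according to which of $|n+s|\ge|n|$ or $|n-s|\ge|n|$ holds. In the first case $\lin{n+s}\ge\lin{n}$ and $\sum_{s\neq n}|n-s|^{-2}=\pi^2/3$; in the second $|n-s|\ge\max(1,|n|)\ge\lin{n}/2$ and $\sum_{s}\lin{n+s}^{-2}<3$. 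This yields the required $C/\lin{n}^{2}$ bound, after which your constants (helped by the AM--GM step $\n{\phm}_w\n{\php}_w\le\tfrac12\n{\ph}_w^2$, which the paper does not use) come in under $4/\lin{n}$.
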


\begin{proof}
As in the preceding lemma, write $T_n^2 f = \Phi g$ with
\[
  g \defl
  A_\lm^{-1}Q_n \Phi A_\lm^{-1} Q_n f.
\]
A straightforward computation yields
\[
  g = \sum_{k,l \neq n}\p*
    {
      \frac{\ph_{k+l}^-}{\lm-k\pi} \frac{f^+_l}{\lm-l\pi}e_k^-
      +
      \frac{\ph_{k+l}^+}  {\lm-k\pi} \frac{f^-_l}{\lm-l\pi}e_k^+
    }
    = (g_-,g_+),
\]
and our aim is to estimate the weighted $\ell^1$-norms $\n{g_+\e_{-n}}_{\ell^1_w}$ and $\n{g_-\e_{n}}_{\ell^1_w}$. By assumption $w=\lin{n}\cdot v$ with some submultiplicative weight $v$, hence
\[
  w_{k-n} \le \frac{\lin{k-n}}{\lin{k+l}\lin{l+n}}w_{k+l}w_{-l-n},\qquad k,l\in\Z.
\]
Consequently, for any $\lm\in \Uf_{n}$
\[
  \n{g_+\e_{-n}}_{\ell^1_w} \le \sum_{k,l\neq n} \frac{\lin{k-n}}{\lin{k+l}\abs{n-k}\lin{l+n}\abs{n-l}} w_{k+l}\abs{\ph^{+}_{k+l}}w_{-l-n}\abs{f_{l}^{-}},
\]
and with Cauchy-Schwarz and Young's inequality for convolutions,
\[
  \phantom{\n{g_+\e_{-n}}_{\ell^1_w}} \le \p[\Bigg]
    {
      \sum_{k,l\neq n}
      \frac{\lin{k-n}^{2}}{\lin{k+l}^2\abs{n-k}^{2}\lin{l+n}^2\abs{n-l}^2}
    }^{1/2}
    \n{\ph}_w\n{f_-}_{w;-n}.
\]
One further checks that
\[
  \sum_{k\neq n} \frac{\lin{k-n}^{2}}{\lin{k+l}^2\abs{n-k}^{2}} \le 32/5,\qquad
  \sum_{l\neq n} \frac{1}{\lin{l+n}^2\abs{n-l}^2} \le \frac{5/2}{\lin{n}^{2}}.
\]
Hence, we obtain for $\n{g_+\e_{-n}}_{\ell^1_w}$ and similarly for $\n{g_-\e_{n}}_{\ell^1_w}$,
\[
  \n{g_+\e_{-n}}_{\ell_w^1}
   \le \frac{4}{\lin{n}}\n{\ph}_w\n{f_-}_{w;-n},\quad
  \n{g_-\e_{n}}_{\ell_w^1}
   \le \frac{4}{\lin{n}}\n{\ph}_w\n{f_+}_{w;n}.
\]
Finally, with $\n{T_n^2 f}_{w;n} = \n{\Phi g}_{w;n}$, this gives
\begin{align*}
  \n{T_n^2f}_{w;n}^2 
   \le \n{\ph}_w^2\p*{\n{g_+\e_{-n}}_{\ell_1^w}^2 + \n{g_-\e_{n}}_{\ell_1^w}^2}
   \le \frac{16}{\lin{n}^2} \n{\ph}_w^4 \n{f}_{w;n}^2.\qed
\end{align*}
\end{proof}

Consequently, $T_n^2$ is a $1/2$-contraction if $\lin{n}\ge 8\n{\ph}_w^2$. In view of
\[
  \hat{T}_n \defl (I-T_n)^{-1} = (I+T_n)(I-T_n^2)^{-1},
\]
one then finds a unique solution
\[
  \Phi v = \hat{T}_nT_n \Phi u
\]
of the $Q$-equation. In turn, as $I+\hat{T}_{n}T_{n} = \hat{T}_{n}$, the $P$-equation yields
\[
  A_\lm u
   = 
  P_n (I + \hat{T}_nT_n)\Phi u
   =
  P_n\hat{T}_n\Phi u.
\]
Writing the latter as
\[
  S_n u = 0,\qquad S_n \colon \Pc_{n}\to \Pc_{n},\quad u \mapsto (A_\lm - P_n\hat{T}_n\Phi)u,
\]
we immediately conclude that there exists the following relationship.

\begin{lemma}
For $\ph\in H_{c}^{1}$ and $\lin{n} \ge 8\n{\ph}_{1}^{2}$ a complex number $\lm\in \Uf_{n}$ is an eigenvalue of $L$ if and only if the determinant of $S_{n}$ vanishes.~
\end{lemma}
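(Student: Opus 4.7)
The plan is to execute the Lyapunov-Schmidt reduction already set up in the preceding paragraphs: decompose $f = u + v$ with $u = P_n f \in \Pc_n$ and $v = Q_n f \in \Qc_n$, split the eigenvalue equation $Lf = \lm f$ into the $P$-equation $A_\lm u = P_n \Phi(u+v)$ and the $Q$-equation $A_\lm v = Q_n \Phi(u+v)$, solve the $Q$-equation uniquely for $v$ as a function of $u$, and read off the eigenvalue condition from the resulting finite-dimensional equation $S_n u = 0$ on the 2-dimensional space $\Pc_n$.

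First I would verify that, under the hypothesis $\lin{n} \ge 8\n{\ph}_{1}^{2}$, Lemma~\ref{Tn2-est} applied with the Sobolev weight $w_{m} = \lin{m\pi}$ (which belongs to $\Mc^{1}$ and satisfies $\n{\ph}_{w} = \n{\ph}_{1}$) yields $\n{T_{n}^{2}}_{w;n} \le 4\n{\ph}_{1}^{2}/\lin{n} \le 1/2$. Hence $(I-T_{n}^{2})$ is invertible on the shifted Sobolev space $H^{w}_{n,c}$ by a convergent Neumann series, and consequently $\hat T_{n} = (I + T_{n})(I - T_{n}^{2})^{-1}$ is a well-defined two-sided inverse of $I - T_{n}$ between the appropriately shifted spaces dictated by Lemma~\ref{Tn-est}.

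For the implication ``$\lm$ an eigenvalue $\Rightarrow \det S_{n} = 0$'', let $f = u + v \neq 0$ be a corresponding eigenfunction with $\lm \in \Uf_{n}$. Both the $P$- and $Q$-equations hold; multiplying the $Q$-equation from the left by $\Phi A_{\lm}^{-1}$ (valid since $A_{\lm}$ is boundedly invertible on $\Qc_{n}$ for $\lm \in \Uf_{n}$) rewrites it as $(I - T_{n})\Phi v = T_{n}\Phi u$, and applying $\hat T_{n}$ yields the unique representation $\Phi v = \hat T_{n} T_{n} \Phi u$. Substituting $I + \hat T_n T_n = \hat T_n$ into the $P$-equation gives $S_{n} u = (A_{\lm} - P_{n}\hat T_{n}\Phi) u = 0$. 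If $u$ vanished, the displayed formula would force $\Phi v = 0$ and hence $A_{\lm} v = 0$ in $\Qc_{n}$, so $v = 0$, contradicting $f \neq 0$; hence $u$ is a nontrivial element of $\ker S_{n}$ and $\det S_{n} = 0$.

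For the converse, given $u \in \Pc_{n} \setminus \{0\}$ with $S_{n} u = 0$, I would define $v \defl A_{\lm}^{-1} Q_{n} \hat T_{n} \Phi u \in \Qc_{n}$ and verify directly, by running the algebraic identities above in reverse, that $f = u + v$ is nonzero and satisfies both the $P$- and $Q$-equations, so $L f = \lm f$. The only genuine subtlety is keeping track of which shifted spaces $\hat T_{n}$ acts on so that the compositions $P_{n}\hat T_{n}\Phi$ and $A_{\lm}^{-1}Q_{n}\hat T_{n}\Phi$ are well-defined; once the $1/2$-contraction on $H^{w}_{n,c}$ is in place this is routine bookkeeping and no further analytic input is required.
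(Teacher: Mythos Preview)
Your proposal is correct and follows essentially the same Lyapunov--Schmidt argument as the paper: solve the $Q$-equation via $\hat T_n$ to express $\Phi v$ in terms of $u$, reduce to $S_n u = 0$, and for the converse reconstruct $v = A_\lm^{-1}Q_n\hat T_n\Phi u$ and verify both equations using $\hat T_n = I + T_n\hat T_n$. You are in fact slightly more explicit than the paper in checking that $u \neq 0$ in the forward direction, which is needed to pass from $S_n u = 0$ to $\det S_n = 0$.
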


\begin{proof}
Suppose $Lf = \lm f$, then by the preceding discussion $S_{n}u = 0$.
Conversely, define for any $u\in \Pc_{n}$,
\[
  v = A_{\lm}^{-1}Q_{n}\hat{T}_{n}\Phi u \in \Qc_{n}.
\]
Then $\Phi v = T_{n}\hat{T}_{n}\Phi u$ is well defined, and it follows with $\hat{T}_{n} = I + T_{n}\hat{T}_{n}$ that
\[
  A_{\lm}v = Q_{n}\hat{T}_{n}\Phi u = Q_{n}\Phi u + Q_{n}T_{n}\hat{T}_{n}\Phi u = Q_{n}\Phi(u+v),
\]
so the $Q$-equation is automatically satisfied. Moreover, if $S_{n}u = 0$, then
\[
  A_{\lm}u = P_{n}\hat{T}_{n}\Phi u = P_{n}(I + T_{n}\hat{T}_{n})\Phi u = P_{n}\Phi(u+v).
\]
Hence also the $P$-equation is satisfied, and $\lm$ is an eigenvalue of $L$ with eigenfunction $f = u+v$.\qed
\end{proof}

Recall that $P_n$ is the orthogonal projection onto the two-dimensional space $\Pc_n$. The matrix representation of an operator $B$ on $\Pc_n$ is given by
\[
  \p*{\lin{Be_n^\pm,e_n^\pm}}_{\pm,\pm}.
\]
Therefore, we find for $S_n$ the representation
\begin{align*}
  A_\lm
   = 
  \mat[\bigg]{
  \lm - n\pi              \\
             & \lm - n\pi 
  },
  \qquad
  P_n\hat{T}_n\Phi =
  \mat[\bigg]{
  a_n^+ & b_n^+\\
  b_n^- & a_n^-
  },
\end{align*}
with the coefficients of the latter matrix given by
\[
  a_n^\pm \defl \lin{\hat{T}_n\Phi e_n^\pm,e_n^\pm},\qquad
  b_n^\pm \defl \lin{\hat{T}_n\Phi e_n^\mp,e_n^\pm}.
\]

We point out that these coefficients depend on $\lm$ and $\ph$. It has been observed in \cite{Djakov:2006ba,Kappeler:2009kp} that these coefficients reflect certain symmetries of the Fourier coefficients of $\ph$. We only need the fact that $a_{n}^{+}$ and $a_{n}^{-}$ coincide.

\begin{lemma}
\label{coeff-sym}
Suppose $\ph\in H_c^1$ and $\lin{n}\ge 8\n{\ph}_1^2$, then for any $\lm\in \Uf_n$,
\[
  a_n^+(\lm) = a_n^-(\lm) \equiv a_{n}(\lm).
\]
\end{lemma}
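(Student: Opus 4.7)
\emph{Proof plan.} Since $\lin{n}\ge 8\n{\ph}_1^2$, Lemma~\ref{Tn2-est} gives $\n{T_n^2}_{1;n}\le 1/2$, so that $(I-T_n^2)^{-1}=\sum_{k\ge 0}T_n^{2k}$ converges in operator norm, and consequently
\[
  \hat T_n=\sum_{k\ge 0}T_n^k
\]
converges absolutely in the same norm. I would substitute this Neumann series into $a_n^\pm=\lin{\hat T_n\Phi e_n^\pm,e_n^\pm}$ and show that each summand is independent of the sign choice.

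The key structural observation is that $\Phi$ swaps the two components of $L_c^2=L^2\oplus L^2$, while $A_\lm^{-1}Q_n$ preserves this splitting. Since $\Phi e_n^+$ lies in the ``minus''-component, an induction on $k$ shows that $T_n^k\Phi e_n^+$ sits in the ``plus''-component if and only if $k$ is odd, and symmetrically for $e_n^-$. All even-$k$ terms therefore vanish identically, leaving
\[
  a_n^\pm=\sum_{s\ge 0}\lin{T_n^{2s+1}\Phi e_n^\pm,e_n^\pm}.
\]

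Expanding an odd term in the Fourier basis $(e_m^\pm)_{m\in\Z}$, I obtain $\lin{T_n^{2s+1}\Phi e_n^+,e_n^+}$ as a $(2s+1)$-fold sum over indices $m_1,\dots,m_{2s+1}\in\Z\setminus\{n\}$ whose summand is a product of $2s+2$ alternating Fourier coefficients of $\phm$ and $\php$---beginning with a coefficient of $\phm$ involving $m_1+n$, cycling through alternating coefficients of $\php,\phm,\php,\dots$ whose indices depend on consecutive pairs $m_j+m_{j+1}$, and ending with a coefficient of $\php$ involving $m_{2s+1}+n$---divided by $\prod_{j=1}^{2s+1}(\lm-m_j\pi)$. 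The analogous expansion of $\lin{T_n^{2s+1}\Phi e_n^-,e_n^-}$ is obtained simply by exchanging the roles of $\phm$ and $\php$ throughout. The index substitution $m_j\mapsto m_{2s+2-j}$ leaves the denominator invariant (the factors commute) and precisely reverses the alternating $\phm$--$\php$ chain, identifying the two sums term by term. Summing over $s$ yields $a_n^+(\lm)=a_n^-(\lm)$.

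The main work is bookkeeping: carefully tracking Fourier indices and signs through each application of $T_n=\Phi A_\lm^{-1}Q_n$, and invoking the absolute convergence granted by Lemma~\ref{Tn2-est} to justify the rearrangement of the resulting multiple series.
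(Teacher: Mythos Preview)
Your plan is correct, but it takes a different route from the paper. The paper argues by an operator symmetry: writing $P=\bigl(\begin{smallmatrix}&1\\1&\end{smallmatrix}\bigr)$ and $\overline{B}u:=\overline{B\bar u}$, one checks directly that $(A_\lm^{-1}Q_n)^*=P\,\overline{A_\lm^{-1}Q_n}\,P$ and $\Phi^*=P\overline{\Phi}P$, hence $(T_n\Phi)^*=P\,\overline{T_n\Phi}\,P$ and, by the Neumann series, $(\hat T_n\Phi)^*=P\,\overline{\hat T_n\Phi}\,P$. Together with $\overline{e_n^\pm}=Pe_n^\mp$ this gives $a_n^+=\lin{\hat T_n\Phi e_n^+,e_n^+}=\lin{\hat T_n\Phi e_n^-,e_n^-}=a_n^-$ in a few lines, without expanding anything in Fourier coordinates. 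Your approach is the coordinate incarnation of the same symmetry: you expand each odd term $\lin{T_n^{2s+1}\Phi e_n^\pm,e_n^\pm}$ as an explicit multi-sum and observe that the index reversal $m_j\mapsto m_{2s+2-j}$ swaps the $\phm$--$\php$ chain while leaving the denominator $\prod_j(\lm-m_j\pi)$ unchanged. The paper's argument is shorter and more conceptual; yours is more hands-on and makes the structure of the coefficients explicit---essentially the same expansion the paper uses in the paragraph following the lemma to estimate $a_n$ and $b_n^\pm$.

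One small point to tighten: the sentence ``$\hat T_n=\sum_{k\ge 0}T_n^k$ converges absolutely in the same norm'' is slightly imprecise, since by Lemma~\ref{Tn-est} a single application of $T_n$ maps $H^{w}_{i,c}$ to $H^{w}_{-i,c}$ rather than to itself. What you actually need (and have) is that $(I-T_n^2)^{-1}=\sum_{k\ge 0}T_n^{2k}$ converges in $\n{\cdot}_{1;n}$, and that the scalar series $\sum_{s\ge 0}\lin{T_n^{2s+1}\Phi e_n^\pm,e_n^\pm}$ converges absolutely; both follow from $\n{T_n^2}_{1;n}\le 1/2$ together with the bound of Lemma~\ref{Tn-est}.
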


\begin{proof}
Recall that $T_n = \Phi A_\lm^{-1} Q_n$, and denote by $\ob{B}u \defl \ob{B\ob{u}}$ the complex conjugation of operators.
From evaluating the bounded diagonal operators $A_\lm^{-1}$ and $Q_n$ at $e_m^\pm$, and using the identity $\ob{e_m^\pm} = Pe_m^\mp$, we conclude
\[
  (A_\lm^{-1})^*
   = P\ob{A_\lm^{-1}}P = A_{\ob\lm}^{-1},\quad 
  Q_n^*
   = P\ob{Q_n}P = Q_n,\qquad\quad 
  P \defl \Bigl( \begin{smallmatrix}
  &\; 1\\ 1&
\end{smallmatrix} \Bigr).
\]
Since $A_\lm^{-1}$ leaves $\Qc_n$ invariant, and $P^2 = I$, we find $(A_\lm^{-1}Q_n)^* = P\overline{A_\lm^{-1}Q_n}P$. With $\Phi^* = P\ob{\Phi}P$ this gives
\[
  (T_n\Phi)^* = \Phi^* (A_\lm^{-1} Q_n)^* \Phi^* = P \ob{T_n\Phi}P.
\]
Inspecting the Neumann expansion of $\hat{T}_n\Phi$ yields  $(\hat{T}_n\Phi)^* = P\overline{\hat{T}_n\Phi}P$, thus
\begin{align*}
  a_n^+
   &= \lin{\hat{T}_n\Phi e_n^+,e_n^+}
   = \lin{e_n^+,(\hat{T}_n\Phi)^*e_n^+}\\
   &= \lin{Pe_n^+,\overline{\hat{T}_n\Phi}Pe_n^+}
   = \lin{\hat{T}_n\Phi e_n^-,e_n^-} = a_n^-.\qed
\end{align*}
\end{proof}

It follows that $S_n$ may be written as
\[
  S_n(\lm) = 
  \mat[\bigg]{
  \lm-n\pi - a_n & -b_n^+\\
  -b_n^-         & \lm-n\pi - a_n
  }.
\]
As $T_n$ and $\Phi$ are anti-diagonal while $I$ and $T_n^2$ are diagonal, all even terms $\lin{T_{n}^{2k}\Phi e_{n}^{+},e_{n}^{+}}$ in the expansion of $a_{n}$ vanish. Using $\hat{T}_n = (I+T_n)(I-T_n^2)^{-1}$ we thus conclude
\[
  a_n = \lin{\hat{T}_n\Phi e_n^+,e_n^+} = \lin{T_n(I-T_n^2)^{-1}\Phi e_n^+,e_n^+}.
\]
On the other hand, all odd terms in the expansion of $b_{n}$ vanish, such that
\[
  b_n^\pm-\ph_{2n}^\pm
   = \lin{(\hat{T}_n-I)\Phi e_n^\mp,e_n^\pm}
   = \lin{T_n^2(I-T_n^2)^{-1}\Phi e_n^\mp,e_n^\pm}.
\]

We introduce the following notion for the sup-norm of a complex valued function on a domain $U\subset\C$,
\[
  \abs{f}_U \defl \sup_{\lm\in U} \abs{f(\lm)}.
\]

\begin{lemma}
\label{coeff-bounds}
If $\ph\in H_c^w$ with $w\in\Mc^{1}$, then for any $\lin{n}\ge 8\n{\ph}_w^2$ the coefficients $a_n$ and $b_n^\pm$ are analytic functions on $\Uf_n$ with bounds
\[
  \abs{a_n}_{\Uf_n} \le \frac{1}{\lin{n}}\n{\ph}_w^2,
  \qquad 
  w_{2n}\abs{b_n^\pm - \ph^\pm_{2n}}_{\Uf_n} \le \frac{8}{\lin{n}}\n{\ph}_w^2\n{\ph_{\!\pm}}_{w}.
\]
\end{lemma}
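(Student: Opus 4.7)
The plan is to derive both the analyticity and the two bounds from Lemmas~\ref{Tn-est} and~\ref{Tn2-est} via the identities
\[
  a_n=\lin{T_n(I-T_n^2)^{-1}\Phi e_n^+,e_n^+},\qquad
  b_n^\pm-\ph^\pm_{2n}=\lin{T_n^2(I-T_n^2)^{-1}\Phi e_n^\mp,e_n^\pm}
\]
established just before the lemma. Analyticity is immediate: $A_\lm^{-1}Q_n$ is analytic on $\Uf_n$ (its denominators $\lm-k\pi$, $k\neq n$, remain bounded away from zero there), so $T_n$ and $T_n^2$ are; combining Lemma~\ref{Tn2-est} with the hypothesis $\lin{n}\ge 8\n{\ph}_w^2$ yields $\n{T_n^2}_{w;n}\le 1/2$, so the Neumann expansion of $(I-T_n^2)^{-1}$ converges uniformly in operator norm on $\Uf_n$, and $a_n,b_n^\pm$ are analytic.

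For $b_n^\pm$, a direct Fourier computation gives $\n{\Phi e_n^\mp}_{w;\pm n}=\n{\ph^\pm}_w$, and the definition of the shifted norm provides the duality $|\lin{f,e_n^\pm}|\le w_{2n}^{-1}\n{f}_{w;\pm n}$. Iterating Lemma~\ref{Tn2-est} on $H^w_{\pm n,c}$ gives
\[
  \n{T_n^{2k}\Phi e_n^\mp}_{w;\pm n}\le\bigl(4\n{\ph}_w^2/\lin{n}\bigr)^k\n{\ph^\pm}_w,
\]
so summing the geometric series (convergent since $4\n{\ph}_w^2/\lin{n}\le 1/2$) yields the stated estimate.

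For $a_n$, set $u=(I-T_n^2)^{-1}\Phi e_n^+$ and use $u=\Phi e_n^++T_n^2u$ to split
\[
  a_n=\lin{T_n\Phi e_n^+,e_n^+}+\lin{T_nT_n^2u,e_n^+}.
\]
An explicit computation exploiting the diagonal structure of $A_\lm^{-1}Q_n$ yields for the first term
\[
  \lin{T_n\Phi e_n^+,e_n^+}=\sum_{l\neq n}\frac{\ph^+_{2l}\ph^-_{2l}}{\lm-(2l-n)\pi}.
\]
The decay in $\lin{n}^{-1}$ is extracted by combining $|\lm-(2l-n)\pi|\ge\pi|n-l|$ on $\Uf_n$ with the submultiplicative inequality $\lin{n}\le 2\lin{n-l}\lin{l}$, which gives $1/|n-l|\les\lin{l}/\lin{n}$; the defining property of $\Mc^1$ (namely $\lin{2l}\le w_{2l}$) then lets Cauchy--Schwarz bound the first term by $c\,\n{\ph}_w^2/\lin{n}$. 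For the remainder, $\n{\Phi e_n^+}_{w;n}=\n{\ph^-}_w$, so Neumann in $H^w_{n,c}$ yields $\n{u}_{w;n}\le 2\n{\ph^-}_w$ and then $\n{T_n^2u}_{w;n}\les\n{\ph}_w^2\n{\ph^-}_w/\lin{n}$ by Lemma~\ref{Tn2-est}. Repeating the explicit estimate with $(T_n^2u)_-$ in place of $\ph^-\e_n$ bounds the remainder by $c\,\n{\ph}_w^4/\lin{n}^2$, which under $\n{\ph}_w^2\le\lin{n}/8$ simplifies to $\les\n{\ph}_w^2/\lin{n}$.

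The principal difficulty is the first term $\lin{T_n\Phi e_n^+,e_n^+}$: the generic operator bound of Lemma~\ref{Tn-est} produces no $\lin{n}^{-1}$ decay (only $c\,\n{\ph}_w^2$), so one must pass to the explicit Fourier computation above and combine the submultiplicative inequality with $w\in\Mc^1$. All other contributions acquire their $\lin{n}^{-1}$ factor directly from Lemma~\ref{Tn2-est}.
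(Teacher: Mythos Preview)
Your approach is correct, but for $a_n$ it takes an unnecessary detour compared to the paper. The paper does \emph{not} split $u=(I-T_n^2)^{-1}\Phi e_n^+$ into $\Phi e_n^+ + T_n^2u$; instead it writes out the full pairing
\[
  a_n=\lin{T_nu,e_n^+}=\sum_{m\neq n}\frac{\ph_{n+m}u_m}{\lm-m\pi}
\]
for general $u=\sum_m u_m e_m^-$, multiplies and divides by $w_{n+m}^2\ge\lin{n+m}^2$, and uses the single observation that $\lin{n+m}^2\abs{n-m}\ge\lin{n}$ for all $m\neq n$ (since $\abs{n-m}\le\abs{n}$ forces $\abs{n+m}\ge\abs{n}$). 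This directly yields $\abs{a_n}\le\lin{n}^{-1}\n{\ph_-}_w\n{u}_{w;n}\le 2\lin{n}^{-1}\n{\ph_-}_w^2$, and the symmetric representation $a_n=\lin{T_n(I-T_n^2)^{-1}\Phi e_n^-,e_n^-}$ gives the analogous bound with $\n{\ph_+}_w^2$; averaging the two produces the sharp constant~$1$. Your decomposition works but duplicates effort: once you see that the explicit Fourier estimate extracts $\lin{n}^{-1}$ for arbitrary $u$ in the minus component, there is no reason to single out the leading term, and your ``principal difficulty'' dissolves.

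One small correction on $b_n^\pm$: the shift should be $+n$ in both cases, not $\pm n$. Lemma~\ref{Tn2-est} is stated and proved for the shift $n$ specifically (the sum $\sum_{l\neq n}\lin{l+n}^{-2}\abs{n-l}^{-2}$ carries the $\lin{n}^{-2}$ decay, whereas $\sum_{l\neq n}\lin{l-n}^{-2}\abs{n-l}^{-2}$ does not), and indeed $\n{\Phi e_n^+}_{w;-n}=\n{\ph_-\e_{2n}}_w\neq\n{\ph_-}_w$. With shift $+n$ one has $\n{\Phi e_n^\mp}_{w;n}=\n{\ph_\pm}_w$ and $w_{2n}\abs{\lin{f,e_n^\pm}}\le\n{f}_{w;n}$ for both signs, which is what the paper uses.
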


\begin{proof}
Since $\n{T_n^2}_{w;n}\le 1/2$, the series expansions of $a_n$ and $b_n^\pm$ converge uniformly on $\Uf_n$ to analytic functions.
Let $u=(I-T_n^2)^{-1}\Phi e_n^+$, then
\[
  \n{u}_{w;n}
   \le  \n{(I-T_n^2)^{-1}}_{w;n}\n{\Phi e_n^+}_{w;n}
   \le 2\n{\phm\e_{n}}_{w;-n} = 2\n{\phm}_{w},
\]
and with the series expansion $u=\sum_{m\in\Z} u_m e_{m}^-$ we may write
\[
  a_n
   = \lin{T_nu,e_n^+}
   = \sum_{m\neq n} \frac{\ph^-_{n+m}}{\lm-m\pi}u_m.
\]
As $\abs{n-m}\le \abs{n}$ implies $\abs{n+m}\ge 2\abs{n}-\abs{n-m}\ge \abs{n}$, this gives
\begin{align*}
  \abs{a_n}_{\Uf_{n}}
   &\le
    \sum_{m\neq n} 
    \frac{1}{\lin{n+m}^{2}\abs{n-m}} w_{n+m}\abs{\ph_{n+m}^-} \cdot w_{n+m}\abs{u_m}\\
   &\le \frac{1}{\lin{n}}\n{\phm}_w\n{u}_{w;n}
   \le \frac{2}{\lin{n}}\n{\phm}_w^2.
\end{align*}
Using the representation $a_{n} = \lin{T_{n}(I-T_{n}^{2})\Phi e_{n}^{-},e_{n}^{-}}$ we similarly obtain
\[
  \abs{a_n}_{\Uf_{n}} \le \frac{2}{\lin{n}}\n{\php}_w^2.
\]
Summing both estimates up gives the first bound. To obtain the second bound, we note that $b_n^- -\ph_{2n}^- = \lin{T_n^2u,e_{n}^-}$. Since $\lin{f,e_n^{-}} = \lin{f\e_{-n},e_{2n}^{-}}$ for any function $f\in L^{2}_{c}$, we conclude
\[
  w_{2n}\abs{\lin{T_n^2u,e_{n}^-}}
  \le \n{T_n^2u}_{w;n}
  \le \frac{8}{\lin{n}}\n{\ph}_w^2\n{\phm}_{w}.
\]
The proof for $b_n^+$ is the same.\qed
\end{proof}

In consequence, the determinant of $S_n$
\[
  \det S_n = (\lm-n\pi - a_n)^2 - b_n^+b_n^-
\]
is analytic on $\Uf_{n}$ and close to $(\lm-n\pi)^2$ provided $\lin{n}$ is sufficiently large.

\begin{lemma}
\label{Sn-roots}
Let $\ph\in H^1_c$, then for any $\lin{n}\ge 8\n{\ph}_1^2$, the determinant of $S_n$ has exactly two complex roots $\xi_+$, $\xi_-$ in $\Uf_n$, which are contained in the disc
\[
  D_n \defl \setd[\bigg]{\abs{\lm-\pi n} \le
  \frac{\n{\ph}_{1}^{2}}{\lin{n}}
  + \frac{\sqrt{2}\n{\ph}_{1}}{\lin{2n}}}
  \subset \setd[\bigg]{\abs{\lm-n\pi}\le \frac{\pi}{5}},
\]
and satisfy
\[
  \abs{\xi^{+}-\xi^{-}}^{2} \le 6\abs{b_{n}^{+}b_{n}^{-}}_{\Uf_{n}}.
\]
\end{lemma}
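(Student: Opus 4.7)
I treat $\det S_{n}$ as a perturbation of $\mu^{2}$ where $\mu \defl \lm - n\pi$, and abbreviate $A \defl \n{\ph}_{1}^{2}/\lin{n}$, $B \defl \sqrt{2}\n{\ph}_{1}/\lin{2n}$; the hypothesis $\lin{n}\ge 8\n{\ph}_{1}^{2}$ forces $A \le 1/8$. The strategy splits into four parts: (i) control $\abs{b_{n}^{+}b_{n}^{-}}_{\Uf_{n}}$ by $B^{2}$; (ii) a Rouché argument on a disc slightly larger than $D_{n}$ to count exactly two roots; (iii) show that every root must lie in $D_{n}$; (iv) derive the separation bound.

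For step (i), Lemma~\ref{coeff-bounds} with the weight $w_{n} = \lin{n}$ yields $\abs{a_{n}}_{\Uf_{n}} \le A$ and $\lin{2n}\abs{b_{n}^{\pm}-\ph_{2n}^{\pm}}_{\Uf_{n}} \le \n{\ph_{\!\pm}}_{1}$, where the hypothesis absorbs the factor $8\n{\ph}_{1}^{2}/\lin{n}\le 1$. Combined with the Parseval-type bound $\abs{\ph_{2n}^{+}}^{2}+\abs{\ph_{2n}^{-}}^{2} \le \n{\ph}_{1}^{2}/\lin{2n}^{2}$ this gives $\abs{b_{n}^{+}}^{2}+\abs{b_{n}^{-}}^{2} \le 4\n{\ph}_{1}^{2}/\lin{2n}^{2}$, and AM-GM produces the desired bound on the product.

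For steps (ii) and (iii), expand $\det S_{n}-\mu^{2} = -2a_{n}\mu + a_{n}^{2} - b_{n}^{+}b_{n}^{-}$. On the circle $\abs{\mu}=\pi/4 \subset \Uf_{n}$ the bounds above give $\abs{\det S_{n}-\mu^{2}} \le \pi/16 + 1/64 + 1/4 < \pi^{2}/16 = \abs{\mu^{2}}$, so Rouché's theorem (compared with the double zero of $\mu^{2}$ at $\mu=0$) yields exactly two zeros of $\det S_{n}$ in $\setd{\abs{\mu}<\pi/4}$ counted with multiplicity. On the other hand, any zero $\xi\in\Uf_{n}$ satisfies $\xi - n\pi = a_{n}(\xi)\pm\sqrt{b_{n}^{+}b_{n}^{-}(\xi)}$, hence $\abs{\xi-n\pi} \le A + B$, placing both roots in $D_{n}$; the elementary inequality $5/8 < \pi/5$ (equivalently $\pi > 25/8$) also confirms $D_{n}\subset\setd{\abs{\mu}\le\pi/5}$.

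For step (iv), set $p_{\pm}\defl \xi^{\pm} - n\pi - a_{n}(\xi^{\pm})$, so that $p_{\pm}^{2} = (b_{n}^{+}b_{n}^{-})(\xi^{\pm})$ and therefore $\abs{p_{\pm}}\le \sqrt{\abs{b_{n}^{+}b_{n}^{-}}_{\Uf_{n}}}$ independently of any sign choice for the square root. Cauchy's estimate on the strip $\Uf_{n}$, together with the horizontal distance $\pi/2-\pi/5=3\pi/10$ from $D_{n}$ to $\partial\Uf_{n}$, gives $\sup_{D_{n}}\abs{a_{n}'} \le 10A/(3\pi) \le 5/(12\pi)$. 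The identity $p_{+}-p_{-} = (\xi^{+}-\xi^{-})-(a_{n}(\xi^{+})-a_{n}(\xi^{-}))$ then produces
\[
  \abs{\xi^{+}-\xi^{-}}\bigl(1-5/(12\pi)\bigr) \le \abs{p_{+}}+\abs{p_{-}} \le 2\sqrt{\abs{b_{n}^{+}b_{n}^{-}}_{\Uf_{n}}},
\]
and squaring yields $\abs{\xi^{+}-\xi^{-}}^{2} \le 4\bigl(1-5/(12\pi)\bigr)^{-2}\abs{b_{n}^{+}b_{n}^{-}}_{\Uf_{n}} < 6\abs{b_{n}^{+}b_{n}^{-}}_{\Uf_{n}}$. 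The main delicacy is tuning the numerical constants so that Rouché's inequality and the inclusion $D_{n}\subset\setd{\abs{\mu}\le\pi/5}$ both close comfortably; pleasantly, no branch selection for the square root enters the argument, since only the trivial triangle inequality $\abs{p_{+}-p_{-}}\le\abs{p_{+}}+\abs{p_{-}}$ is needed.
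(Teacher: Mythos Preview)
Your proof is correct and follows essentially the same strategy as the paper: the coefficient bounds from Lemma~\ref{coeff-bounds}, a Rouch\'e argument to count the roots, and Cauchy's estimate for the separation bound. The only noteworthy tactical difference is in the Rouch\'e step: the paper applies Rouch\'e twice directly on $D_{n}$ (first comparing $\lm-n\pi$ with $h=\lm-n\pi-a_{n}$, then $h^{2}$ with $\det S_{n}$), whereas you compare $\mu^{2}$ with $\det S_{n}$ once on the larger circle $\abs{\mu}=\pi/4$ and then use the explicit identity $\xi-n\pi=a_{n}(\xi)\pm\sqrt{b_{n}^{+}b_{n}^{-}(\xi)}$ to force every root of $\det S_{n}$ in $\Uf_{n}$ into $D_{n}$. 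Both routes yield the same conclusion with comparable effort.
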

\begin{proof}
The estimates of the preceding lemma give for $\lin{n}\ge 8\n{\ph}_{1}^{2}$
\[
  \abs{a_{n}}_{\Uf_{n}} \le \frac{\n{\ph}_{1}^{2}}{\lin{n}},\qquad
  \lin{2n}^{2}\abs{b_{n}^{+}b_{n}^{-}}_{\Uf_{n}} \le \left(1 + \frac{8\n{\ph}_{1}}{\lin{n}}\right)^{2}
  \n{\php}_{1}\n{\phm}_{1},
\]
where we used $w_{2n}\abs{b_{n}^{\pm}} \le \n{\ph}_{w} + w_{2n}\abs{b_{n}^{\pm}-\ph_{2n}^{\pm}}$.
Therefore,
\[
  \abs{a_n}_{\Uf_n} + \abs{b_n^{+}b_{n}^{-}}_{\Uf_n}^{1/2}
   \le \inf_{\lm\in\,\Uf_{n}\setminus D_{n}}\abs{\lm-n\pi}
    = \frac{\n{\ph}_{1}^{2}}{\lin{n}}
     + \sqrt{2}\frac{\n{\ph}_{1}}{\lin{2n}}
   \le \pi/5.
\]
It follows from Rouche's Theorem that the function $h = \lm - n\pi - a_{n}$ has a single root in $D_n$, just as $\lm-n\pi$. Furthermore, $h^2$ and $\det S_n$ have the same number of roots in $D_n$, namely two when counted with multiplicity, while $\det S_n$ clearly has no root in $\Uf_n\setminus D_n$.

To estimate the distance of the roots, we write $\det S_n=g_+g_-$ with
\[
  g_{\pm} = \lambda-\pi n - a_n \mp \sigma_n, \qquad \sigma_n = \sqrt{b_n^+b_n^-},
\]
where the branch of the root is immaterial. Each root $\xi$ of $\det S_n$ is either a root of $g_+$ or $g_-$, respectively, and thus satisfies $\xi = \pi n + a_n(\xi) \pm \sigma_n(\xi)$.
Therefore,
\begin{align*}
  \abs{\xi_+-\xi_-}
   &\le \abs{a_n(\xi_+)-a_n(\xi_-)} + \abs{\sigma_n(\xi_+)\pm\sigma_n(\xi_-)}\\
   &\le \abs{\partial_{\lm}a_{n}}_{D_{n}}\abs{\xi_+-\xi_-} + 2\abs{\sigma_n}_{\Uf_n}.
\end{align*}
Since $\dist(D_{n},\partial \Uf_{n}) \ge \pi/2 - \pi/5$, the claim follows with Cauchy's estimate
\[
  \abs{\partial_\lambda a_n}_{D_n}
   \le \frac{\abs{a_n}_{\Uf_n}}{\dist(D_n, \partial \Uf_n)}
   \le \frac{1/8}{\pi/2-\pi/5} \le \frac{1}{6}.\qed
\]
\end{proof}

\begin{proof}[Proof of Theorem~\ref{ev-trap}.]
For each $\lin{n}\ge 8\n{\ph}_1^2$ Lemma~\ref{Sn-roots} applies giving us the two roots $\xi_+$ and $\xi_-$ of $\det S_n$ in $D_n\subset \Uf_n$. As the strips $\Uf_n$ cover the complex plane, and since $\lm_{n}^{\pm} \sim n\pi$ asymptotically as $n\to\pm\infty$ while there are no periodic eigenvalues in $\bigcup_{\lin{n} \ge 8\n{\ph}_{1}^{2}} (\Uf_n\setminus D_{n})$, it follows by a standard counting argument that these roots have to be the periodic eigenvalues $\lm_n^\pm$. In turn, the remaining eigenvalues have to be contained in the strip
\[
  \setdef*{\lm\in\C}{\abs{\Re \lm} \le (8\n{\ph}_{1}^{2}-1/2)\pi}.
\]
To obtain the estimate for the imaginary part, suppose $f$ is a $L^{2}_{c}$ normalised eigenfunction for $\lm$, then
\[
  2\Im \lm = \lm-\ob{\lm} = \lin{Lf,f}-\lin{f,Lf} = \lin{(L-L^{*})f,f}.
\]
Further, using the $L^{\infty}$-estimate $\n{g}_{\infty} \le \sqrt{2}\n{g}_{1}$ for $g\in H^{1}$, we find
\[
  \n{(L-L^{*})f}_{0} \le \sqrt{2}\n{\php-\ob{\phm}}_{1}\n{f}_{0} \le 2\n{\ph}_{1}.
\]
This completes the proof of the theorem.\qed
\end{proof}

Incidentally, we obtain the following estimate for the gap lengths, which we will use in section~\ref{s:act-west}.

\begin{proposition}
\label{gap-est}
Suppose $\ph\in H^{w}_{c}$ with $w\in\Mc^{1}$, then for any $\lin{N}\ge 8\n{\ph}_{w}^{2}$,
\[
  \sum_{\abs{n}\ge N} w_{2n}^{2}\abs{\gm_{n}(\ph)}^{2}
   \le 6\n{R_{N}\ph}_{w}^{2} +  \frac{1152}{\lin{N}}\n{\ph}_{w}^{6},
\]
where $R_{N}\ph = \sum_{\abs{n}\ge N} (\ph_{2n}^{-}\e_{-2n},\ph_{2n}^{+}\e_{2n})$.
If, in addition, $\ph$ is in the complex neighbourhood $W$ of $L^{2}_{r}$, then
\[
  \sum_{n\in\Z} w_{2n}^{2}\abs{\gm_{n}(\ph)}^{2}
   \le 265\pi^{2}w^{2}[16\n{\ph}_{w}^{2}](1+\n{\ph}_{w}^{2})\n{\ph}_{w}^{2}.
\]
\end{proposition}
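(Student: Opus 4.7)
My plan is to treat the two parts separately, with part~(i) flowing directly from the localisation lemmas and part~(ii) requiring an additional bound for low frequencies.

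For part~(i), I will fix $\lin{N}\ge 8\n{\ph}_{w}^{2}$ and, for each $\abs{n}\ge N$, apply Lemma~\ref{Sn-roots} to get $\abs{\gm_{n}}^{2}\le 6\abs{b_{n}^{+}b_{n}^{-}}_{\Uf_{n}}$. AM-GM then yields $\abs{\gm_{n}}^{2}\le 3(\abs{b_{n}^{+}}_{\Uf_{n}}^{2}+\abs{b_{n}^{-}}_{\Uf_{n}}^{2})$. Splitting $b_{n}^{\pm}=\ph_{2n}^{\pm}+(b_{n}^{\pm}-\ph_{2n}^{\pm})$, applying $(a+b)^{2}\le 2a^{2}+2b^{2}$, and inserting the second bound of Lemma~\ref{coeff-bounds} gives the pointwise estimate
\[
  w_{2n}^{2}\abs{\gm_{n}}^{2}\le 6w_{2n}^{2}(\abs{\ph_{2n}^{-}}^{2}+\abs{\ph_{2n}^{+}}^{2})+\frac{384}{\lin{n}^{2}}\n{\ph}_{w}^{6}.
\]
Summing over $\abs{n}\ge N$ produces the main term $6\n{R_{N}\ph}_{w}^{2}$, and the elementary tail bound $\sum_{\abs{n}\ge N}\lin{n}^{-2}\le 3/\lin{N}$ (integral comparison) yields the stated constant $1152=3\cdot 384$.

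For part~(ii), I will choose $N\ge 0$ minimal with $\lin{N}\ge 8\n{\ph}_{w}^{2}$, so that $\lin{N}\le 8\n{\ph}_{w}^{2}+1$ and $w_{2n}\le w[16\n{\ph}_{w}^{2}]$ whenever $\abs{n}\le N$ by monotonicity and symmetry of $w$. I will split $\sum_{n\in\Z}=\sum_{\abs{n}\ge N}+\sum_{\abs{n}<N}$ and bound the tail via part~(i) with $\n{R_{N}\ph}_{w}\le\n{\ph}_{w}$, obtaining a contribution of order $\n{\ph}_{w}^{2}+\n{\ph}_{w}^{4}$. For the low-frequency piece, Theorem~\ref{ev-trap} confines each $\lm_{n}^{\pm}$ with $\lin{n}<8\n{\ph}_{w}^{2}$ to a box of real width $(16\n{\ph}_{w}^{2}-1)\pi$ and height $2\n{\ph}_{w}$. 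When $\ph$ is of real type, the strict ordering of eigenvalues gives the telescoping estimate
\[
  \sum_{\abs{n}<N}\gm_{n}\le \lm_{N-1}^{+}-\lm_{-(N-1)}^{-}\le (16\n{\ph}_{w}^{2}-1)\pi,
\]
and since $\max_{\abs{n}<N}\gm_{n}$ is bounded by the same quantity,
\[
  \sum_{\abs{n}<N}\gm_{n}^{2}\le \Bigl(\max_{\abs{n}<N}\gm_{n}\Bigr)\sum_{\abs{n}<N}\gm_{n}\le 256\pi^{2}\n{\ph}_{w}^{4}.
\]
Weighting by $w[16\n{\ph}_{w}^{2}]^{2}$ and adding the tail contribution delivers a bound of the form $C\pi^{2}w[16\n{\ph}_{w}^{2}]^{2}(1+\n{\ph}_{w}^{2})\n{\ph}_{w}^{2}$, with $C\le 265$ after accounting.

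The hard part will be extending this low-frequency bound from real $\ph$ to general $\ph\in W$, since the telescoping argument relies on the strict real ordering. My plan is to exploit that $\gm_{n}^{2}=(\lm_{n}^{+}+\lm_{n}^{-})^{2}-4\lm_{n}^{+}\lm_{n}^{-}$ is analytic on $W$---both $\lm_{n}^{+}+\lm_{n}^{-}$ and $\lm_{n}^{+}\lm_{n}^{-}$ being elementary symmetric in the $n$th pair of zeros of the analytic function $\Dl^{2}(\cdot,\ph)-4$---and transfer the real bound by a continuous deformation within $W$ connecting $\ph$ to a nearby real-type potential with controlled $H^{w}$-norm. Alternatively, I may bound $\abs{\gm_{n}}$ for $\ph\in W$ directly from the diameters of the isolating neighbourhoods $U_{n}$, which remain close to real segments when $\ph$ is sufficiently close to $L^{2}_{r}$. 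Either route will contribute only a bounded additional factor to be absorbed in the absolute constant $265\pi^{2}$.
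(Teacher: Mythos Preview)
Your proof of part~(i) is essentially identical to the paper's: apply Lemma~\ref{Sn-roots}, AM--GM, split $b_n^\pm$ into Fourier coefficient plus remainder, insert Lemma~\ref{coeff-bounds}, and sum using $\sum_{\abs{n}\ge N}\lin{n}^{-2}\le 3/\lin{N}$.

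For part~(ii) the paper takes a more direct route than you do. It never singles out real-type potentials. Instead it uses only the defining property of $W$: for $\ph\in W$ the gaps $[\lm_n^-,\lm_n^+]$ sit inside disjoint discs $U_n$ centred on the real axis. Since Theorem~\ref{ev-trap} pins down the eigenvalues for $\abs{n}\ge N$, the discs $U_n$ with $\abs{n}<N$ are trapped in a real segment of length $(2N-1)\pi$, so their diameters---and hence the $\abs{\gm_n}$---sum to at most $(2N-1)\pi$. Then $\sum_{\abs{n}<N}\abs{\gm_n}^2\le(\sum_{\abs{n}<N}\abs{\gm_n})^2\le(2N-1)^2\pi^2$, and the weight is controlled by $w_{2N-2}\le w[16\n{\ph}_w^2]$. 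This is precisely your ``alternative'' plan~(b), and once you take it there is no need for the real-type telescoping argument at all.

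Your primary plan~(a), by contrast, does not work as described. Analyticity of $\gm_n^2$ on $W$ does not let you transfer an inequality from the real submanifold $L_r^2$ to a complex neighbourhood: an analytic function can grow arbitrarily when you leave a real slice, and a ``continuous deformation'' gives no quantitative control on how $\gm_n^2$ changes along the path. You would need something like a maximum-modulus argument on a polydisc, which is not available here. So plan~(a) is a genuine gap; commit to plan~(b), which is both simpler and complete.
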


\begin{proof}
By Lemma~\ref{Sn-roots} we have for any $\lin{n} \ge 8\n{\ph}_{w}^{2}$ the estimate
\[
  \abs{\gm_{n}}^{2}
   =   \abs{\lm_{n}^{+}-\lm_{n}^{-}}^{2}
   \le 6\abs{b_{n}^{+}b_{n}^{-}}_{\Uf_{n}}
   \le 3\p{\abs{b_{n}^{+}}_{\Uf_{n}}^{2} + \abs{b_{n}^{-}}_{\Uf_{n}}^{2}}.
\]
Using $\abs{b_{n}^{\pm}}_{\Uf_{n}} \le \abs{\ph_{2n}^{\pm}} + \abs{b_{n}^{\pm}-\ph_{2n}^{\pm}}_{\Uf_{n}}$ we thus find for any $\lin{N}\ge 8\n{\ph}_{w}^{2}$,
\begin{align*}
  &\frac{1}{6}\sum_{\abs{n}\ge N} w_{2n}^{2}\abs{\gm_{n}(\ph)}^{2}\\
   &\qquad \le \sum_{\abs{n}\ge N} w_{2n}^{2}(\abs{\ph_{2n}^{+}}^{2} + \abs{\ph_{2n}^{-}}^{2}
           +   \abs{b_{n}^{+}-\ph_{2n}^{+}}_{\Uf_{n}}^{2} 
       		+ \abs{b_{n}^{-}-\ph_{2n}^{-}}_{\Uf_{n}}^{2}).
\end{align*}
Further by Lemma~\ref{coeff-bounds}, $w_{2n}^{2}\abs{\ph_{2n}^{\pm}-b_{n}^{\pm}}_{\Uf_{n}}^{2} \le 64\lin{n}^{-2}\n{\ph}_{w}^{4}\n{\ph_{\pm}}_{w}^{2}$, hence
\[
  \frac{1}{6}\sum_{\abs{n}\ge N} w_{2n}^{2}\abs{\gm_{n}(\ph)}^{2}
   \le \n{R_{N}\ph}_{w}^{2} + 64\n{\ph}_{w}^{6}\sum_{\abs{n}\ge N} \frac{1}{\lin{n}^{2}},
\] 
and the first claim follows with $\sum_{\abs{n}\ge N} 1/\lin{n}^{2} \le 3/\lin{N}$.

If additionally $\ph\in W$, then each gap is contained in its isolating neighbourhood $U_{n}$. Those are disjoint complex discs centred on the real line, whose diameters for $\abs{n} < N$ sum up to at most $(2N-1)\pi$ by Theorem~\ref{ev-trap}. Therefore,
\[
  \sum_{\abs{n} < N} w_{2n}^{2}\abs{\gm_{n}(\ph)}^{2}
   \le w^{2}_{2N-2}\p[\Bigg]{\sum_{\abs{n} < N}\abs{\gm_{n}(\ph)}}^{2}\\
   \le w^{2}_{2N-2}(2N-1)^{2}\pi^{2},
\]
and choosing $N+1 \ge 8\n{\ph}_{w}^{2} > N$ gives the second claim.\qed
\end{proof}

If, for $\lin{n}\ge 8\n{\ph}_{w}^{2}$, we use
\[
  w_{2n}\abs{b_{n}^{\pm}}
   \le \n{\ph_{\pm}}_{w} + \frac{8}{\lin{n}}\n{\ph}_{w}^{2}\n{\ph_{\pm}}
   \le 2\n{\ph_{\pm}}_{w},
\]
then we obtain the \emph{individual gap estimate}
\begin{align}
  \label{i-gap}
  w_{2n}\abs{\gm_{n}(\ph)} \le 4\n{\ph}_{w}.
\end{align}

\section{Estimating the Actions}
\label{s:act-est}

As an immediate corollary to the localisation obtained in the previous section, we obtain the following quantitative estimate of the high-level actions.

\begin{proposition}
\label{In-Jn-est}
If $\ph\in H_{r}^{1}$, then for $\lin{n}\ge 8\n{\ph}_1^2$ and $m\ge 0$,
\[
  J_{n,2m+1} = \zeta_{n,m}^{2m} I_{n},\qquad
  \abs{\zeta_{n,m}-n\pi} \le \frac{\n{\ph}_{1}^{2}}{\lin{n}} + \frac{\sqrt{2}\n{\ph}_{1}}{\lin{2n}}.
\]
In particular, if $n\neq 0$ and $\lin{n}\ge 8\n{\ph}_1^2$, then
\[
  2^{-m}\lin{2n\pi}^{2m}I_{n} \le 4^{m}J_{n,2m+1} \le \lin{2n\pi}^{2m}I_{n},
\]
while the remaining actions for all $\lin{n} < 8\n{\ph}_{1}^{2}$ satisfy
\[
   4^{m}\abs{J_{n,2m+1}} \le (16\pi)^{2m}\n{\ph}_1^{4m} I_n.
\]
\end{proposition}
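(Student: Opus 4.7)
The plan is to start from the mean-value representation \eqref{zt-In-Jn}, which writes $J_{n,2m+1} = \zeta_{n,m}^{2m} I_{n}$ for some $\zeta_{n,m} \in [\lm_{n}^{-},\lm_{n}^{+}]$. This reduces every claim of the proposition to controlling the location of $\zeta_{n,m}$, which is precisely what Theorem~\ref{ev-trap} provides. The first assertion is then immediate: for $\lin{n} \ge 8\n{\ph}_{1}^{2}$, Theorem~\ref{ev-trap} gives the displayed bound on $|\lm_{n}^{\pm} - n\pi|$, and since $\zeta_{n,m}$ lies on the real interval $[\lm_{n}^{-},\lm_{n}^{+}]$ it inherits exactly the same bound.

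For the two-sided estimate, with $n \ne 0$ and $\lin{n} \ge 8\n{\ph}_{1}^{2}$, I would insert the constraint $\n{\ph}_{1} \le \sqrt{\lin{n}/8}$ into the bound on $|\zeta_{n,m} - n\pi|$ to get
\[
  |\zeta_{n,m} - n\pi| \le \frac{1}{8} + \frac{\sqrt{\lin{n}}}{2\sqrt{2}\,\lin{2n}},
\]
which, elementary analysis (worst case at $|n|=1$) shows, is strictly less than $1/2$. This gives the upper bound $|\zeta_{n,m}| \le |n|\pi + 1/2 = \lin{2n\pi}/2$, and raising to the $2m$-th power yields $4^m\zeta_{n,m}^{2m} \le \lin{2n\pi}^{2m}$. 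For the lower inequality, one needs $|\zeta_{n,m}| \ge \lin{2n\pi}/(2\sqrt{2})$; using $|\zeta_{n,m}| \ge |n|\pi - 1/2$ this reduces to the elementary inequality $|n|\pi(\sqrt{2}-1) \ge (1+\sqrt{2})/2$, which holds for every $|n|\ge 1$. Raising to the $2m$-th power then gives $4^m\zeta_{n,m}^{2m} \ge 2^{-m}\lin{2n\pi}^{2m}$.

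For the last estimate, when $\lin{n} < 8\n{\ph}_{1}^{2}$, the real-type spectrum lies on the real axis, so by the second part of Theorem~\ref{ev-trap} every such eigenvalue satisfies $|\lm_{n}^{\pm}| \le (8\n{\ph}_{1}^{2} - 1/2)\pi < 8\pi\n{\ph}_{1}^{2}$. Consequently $|\zeta_{n,m}| < 8\pi\n{\ph}_{1}^{2}$, and
\[
  4^{m}|J_{n,2m+1}| = 4^{m}|\zeta_{n,m}|^{2m}I_{n} < 4^{m}(8\pi)^{2m}\n{\ph}_{1}^{4m}I_{n} = (16\pi)^{2m}\n{\ph}_{1}^{4m}I_{n},
\]
using the arithmetic identity $4^{m}\cdot 64^{m}=16^{2m}$. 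The only real obstacle is that the lower bound of the two-sided estimate is tight at $|n|=1$: one cannot afford to throw away either term of the refined bound from Theorem~\ref{ev-trap} and rely on the cruder $\pi/5$ estimate, so care is needed in the elementary verification.
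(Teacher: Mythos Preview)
Your proof is correct and follows essentially the same route as the paper: start from the mean-value representation \eqref{zt-In-Jn}, feed in the eigenvalue localisation of Theorem~\ref{ev-trap}, and reduce everything to the elementary inequality $|\zeta_{n,m}-n\pi|\le 1/2$ for $n\neq 0$. The paper reaches that bound via $\lin{2n}\ge \tfrac{3}{2}\lin{n}$ rather than your worst-case check at $|n|=1$, but the two computations are equivalent; note only that your intermediate expression should read $\frac{\sqrt{\lin{n}}}{2\lin{2n}}$ rather than $\frac{\sqrt{\lin{n}}}{2\sqrt{2}\,\lin{2n}}$, though the conclusion is unaffected.
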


\begin{proof}
Recall from \eqref{zt-In-Jn} that $J_{n,2m+1} = \zt_{n,m}^{2m}I_{n}$  with $\zt_{n,m}\in[\lm_{n}^{-},\lm_{n}^{+}]$. Provided $\lin{n}\ge8\n{\ph}_{1}^{2}$, the estimate of $\abs{\zt_{n,m}-n\pi}$ follows from Theorem~\ref{ev-trap}. If additionally $n\neq 0$, then $\lin{2n}\ge 3\lin{n}/2$ and hence $\abs{\zt_{n,m}-n\pi} \le 1/2$. In consequence,
\[
  \frac{1}{\sqrt{2}}\lin{2n\pi} \le  2\abs{\zt_{n,m}} \le \lin{2n\pi},\qquad n\neq 0.
\]
Conversely, if $\lin{n} < 8\n{\ph}_{1}^{2}$, then $\abs{\zt_{n,m}} \le 8\pi \n{\ph}_{1}^{2}$.\qed
\end{proof}

In the sequel we use Proposition~\ref{In-Jn-est} to obtain an estimate of
\[
  \n{I(\ph)}_{\ell^{1}_{2m}} = \sum_{n\in\Z} \lin{2n\pi}^{2m}\abs{I_{n}}
\]
in terms of $\sum_{n\in\Z} J_{n,2m+1}$ and a remainder depending solely on $\n{\ph}_{1}$. The trace formula and the polynomial structure of the Hamiltonians then allows us to obtain the first part of Theorem~\ref{act-sob-est}.

\begin{lemma}
\label{I-H}
For every $m\ge 1$,
\[
  \n{I(\ph)}_{\ell^{1}_{2m}}
   \le \lin{16\pi}^{2m}(1+\n{\ph}_{1})^{4m}\n{\ph}_{0}^2 + (-1)^{m+1}2^{m}\Hc_{2m+1}(\ph),
\]
uniformly for all $\ph\in H_r^{m}$.
\end{lemma}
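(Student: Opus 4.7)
The plan is to split the series $\n{I(\ph)}_{\ell^{1}_{2m}}=\sum_{n\in\Z}\lin{2n\pi}^{2m}I_{n}$ at the threshold that makes Proposition~\ref{In-Jn-est} sharp, namely into a high-frequency block
\[
  H = \setd{n\in\Z : n\neq 0,\ \lin{n}\ge 8\n{\ph}_{1}^{2}}
\]
and its complement $L=\Z\setminus H$ (a finite block of indices including $n=0$). Since $\ph$ is of real type, both $I_{n}\ge 0$ and $J_{n,2m+1}\ge 0$ for all $n$, so the rearrangements below cost nothing.

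For the high-frequency block, Proposition~\ref{In-Jn-est} directly gives $\lin{2n\pi}^{2m}I_{n}\le 2^{m}\cdot 4^{m}J_{n,2m+1}$. Extending the sum over $n\in\Z$ (harmless since every $J_{n,2m+1}\ge 0$) and applying the trace formula~\eqref{tf-k} at level $k=2m+1$, for which $-1/(2\ii)^{2m}=(-1)^{m+1}/4^{m}$,
\[
  \sum_{n\in H}\lin{2n\pi}^{2m}I_{n}
  \le 2^{m}\sum_{n\in\Z}4^{m}J_{n,2m+1}
  =(-1)^{m+1}2^{m}\Hc_{2m+1}(\ph).
\]

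For the low-frequency block $L$, every index satisfies $\abs{n}\le 8\n{\ph}_{1}^{2}$, so $\lin{2n\pi}\le 1+16\pi\n{\ph}_{1}^{2}\le\lin{16\pi}(1+\n{\ph}_{1})^{2}$ (the inequality $1+ax^{2}\le(1+a)(1+x)^{2}$ for $a,x\ge 0$). Combined with Parseval's identity~\eqref{tf-1}, which yields $\sum_{n}I_{n}=\tfrac{1}{2}\n{\ph}_{0}^{2}$,
\[
  \sum_{n\in L}\lin{2n\pi}^{2m}I_{n}
  \le\lin{16\pi}^{2m}(1+\n{\ph}_{1})^{4m}\sum_{n\in\Z}I_{n}
  \le\lin{16\pi}^{2m}(1+\n{\ph}_{1})^{4m}\n{\ph}_{0}^{2}.
\]
Adding the two bounds gives the claim.

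The one subtle point is regularity: Theorem~\ref{tf} applies for $\ph\in H_{c}^{2m}\cap W$, while the lemma is stated on $H_{r}^{m}$. I would first carry out the estimate above for smooth $\ph$ (where everything is justified), and then extend to $H_{r}^{m}$ by density, using that both sides are continuous in the $H^{m}$-topology: $\Hc_{2m+1}$ is defined by the polynomial formula on $H_{r}^{m}$ and is continuous there, while the partial sums of $\n{I(\ph)}_{\ell^{1}_{2m}}$ are controlled uniformly by the RHS so that a monotone-convergence argument passes the inequality to the limit. This technical bridging is the only nontrivial part; the rest is just bookkeeping.
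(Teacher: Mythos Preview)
Your proof is correct and follows essentially the same route as the paper: split at the threshold $\lin{n}\ge 8\n{\ph}_{1}^{2}$, use Proposition~\ref{In-Jn-est} together with nonnegativity of $J_{n,2m+1}$ and the trace formula~\eqref{tf-k} on the high block, and bound the low block by $\lin{16\pi}^{2m}(1+\n{\ph}_{1})^{4m}$ times $\sum_{n}I_{n}\le\n{\ph}_{0}^{2}$ via~\eqref{tf-1}. Your explicit treatment of the $H^{m}_{r}$ versus $H^{2m}_{r}$ regularity gap (density plus continuity of $\Hc_{2m+1}$ on $H^{m}_{r}$) is a point the paper leaves implicit.
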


\begin{proof}
Choose $N+1\ge 8\n{\ph}_{1}^2 > N$, then by Proposition~\ref{In-Jn-est}, the trace formula~\eqref{tf-k}, and the positivity of the actions
\[
  \sum_{\abs{n}> N} \lin{2n\pi}^{2m} I_n
    \le 8^{m}\sum_{n\in\Z} J_{n,2m+1}
    = (-1)^{m+1}2^{m}\Hc_{2m+1}.
\]
On the other hand, by our choice of $N$ and the trace formula~\eqref{tf-1}
\[
  \sum_{\abs{n} \le N} \lin{2n\pi}^{2m}I_n
   \le \lin{2N\pi}^{2m}\sum_{n\in\Z} I_n
   \le \lin{16\pi}^{2m}(1+\n{\ph}_{1})^{4m}\n{\ph}_{0}^2.\qed
\]
\end{proof}

We denote the two components of $\ph\in L^{2}_{r}$ by $\ph = (\psi,\ob{\psi})$, and to simplify notation write $\psi_{(m)} = \partial_{x}^{m} \psi$ such that
\[
  \int_{\T} \abs{\psi_{(m)}}^{2}\,\dx = \frac{1}{2}\n{\ph_{(m)}}_{0}^{2}.
\]
We further note that on $H_{r}^{m}$,
\[
  (-1)^{m+1}\Hc_{2m+1}(\ph)
   = 
  \int_\T \paren*{\abs{\psi_{(m)}}^2 + p_{2m}(\psi,\ob\psi,\ldots,\psi_{(m-1)},\ob \psi_{(m-1)})}\,\dx,
\]
with $p_{2m}$ being a homogenous polynomial of degree $2m+2$ when $\psi$, $\ob\psi$, and $\partial_x$ each count as one degree. Further, the degree of each monomial of $p_{2m}$ with respect to $\psi$ equals the degree with respect to $\ob\psi$ -- see Corollary~\ref{h-form} from the appendix.
Consequently, each monomial $\mathfrak{q}$ of $p_{2m}$ may be estimated by
\[
  \abs{\mathfrak{q}} \le c_{\mathfrak{q}} \abs{\psi}^{\mu_{0}}\abs{\psi_{x}}^{\mu_{1}}\dotsm\abs{\psi_{(m-1)}}^{\mu_{m-1}},
\]
with some positive constant $c_{\mathfrak{q}}$ and integers $\mu_{0},\dotsc,\mu_{m-1}$.  Since $\mathfrak{q}$ has degree $2m+2$ we have
\[
  \sum_{0\le i\le m-1} (1+i)\mu_{i} = 2m+2,
\]
and since the degree with respect to $\psi$ and $\ob{\psi}$ is the same, $\sum_{0\le i\le m-1} \mu_{i}$ is an even integer. Denote by $\Ic_{2m+2}\subset (\Z_{\ge0}/2)^{m}$  the set of all multi-indices $\mu = (\mu_{i})_{0\le i\le m-1}$ satisfying the constraints
\begin{align}
  \label{p-const}
  \sum_{0\le i\le m-1} (1+i)\mu_{i} = 2m+2,\qquad \abs{\mu} \defl \sum_{0\le i\le m-1} \mu_{i} 
  \;\in\;2\Z.
\end{align}
Then, we obtain the estimate
\begin{align}
 \abs{p_{2m}} \le \sum_{\mu\in\Ic_{2m+2}} c_{\mu} \abs{\mathfrak{q}_{\mu}},\qquad
  \label{p-est}
  \abs{\mathfrak{q}_{\mu}} = \abs{\psi}^{\mu_{0}}\abs{\psi_{x}}^{\mu_{1}}\dotsm\abs{\psi_{(m-1)}}^{\mu_{m-1}},
\end{align}
with positive constants $c_{\mu}$. This representation of $p_{2m}$ allows us to obtain detailed estimates of the Hamiltonians $\Hc_{2m+1}$.

\begin{lem}
\label{pm-Hm-est}
For any $m\ge 1$ and any $\ep > 0$ there exists $C_{\ep,m}$ so that
\[
  \int_{\T} \abs{p_{2m}}\,\dx
   \le \ep\nn{\partial_{x}^{m}\psi}^{2} + C_{\ep,m}(1+\nn{\psi}^{4m})\nn{\psi}^{2}.
\]
In particular,
\[
  \abs{\Hc_{2m+1}}
   \le \p*{1+\ep}\nn{\partial_{x}^{m}\psi}^{2} + C_{\ep,m}(1+\nn{\psi}^{4m})\nn{\psi}^{2}.
\]
\end{lem}

\begin{proof}
First note that $\abs{\mu} \ge 4$ for any $\mu\in\Ic_{2m+2}$. Indeed, under the constraint $\abs{\mu} = k$ for some fixed $k\ge 0$ the expression $\sum_{0\le i\le m-1} (1+i)\mu_{i}$ attains its maximum when $\mu_{m-1} = \abs{\mu}$ while all other coefficients vanish. In this case,
\[
  \sum_{0\le i\le m-1} (1+i)\mu_{i} = m\abs{\mu},
\]
and the right hand side is strictly less than $2m+2$ if $\abs{\mu} \le 2$. Therefore, $\abs{\mu}$ is an even integer strictly larger than $2$.

As a consequence, for any $\mu\in\Ic_{2m+2}$, there either exist two distinct nonzero coefficients $\mu_{k}$, $\mu_{l}$ with $0\le k,l\le m-1$ or $\mu_{k} = \abs{\mu}\ge 4$ for some $0\le k\le m-1$ while all other coefficients vanish. In the first case, using Cauchy-Schwarz and the $L^{\infty}$-estimate, we obtain
\begin{align*}
  \int_{\T} \abs{\mathfrak{q}_{\mu}} \,\dx
   &\le \int \p*{\prod_{0\le i\neq k,l}^{m-1} \abs{\psi_{(i)}}^{\mu_{i}}}
             \abs{\psi_{(k)}}^{\mu_{k}-1}
             \abs{\psi_{(l)}}^{\mu_{l}-1} 
             \abs{\psi_{(k)}}
             \abs{\psi_{(l)}} \,\dx\\
  &\le \p*{\prod_{0\le i\neq k,l}^{m-1} \n{\psi_{(i)}}_{L^{\infty}}^{\mu_{i}}}
       \n{\psi_{(k)}}_{L^{\infty}}^{\mu_{k}-1}
       \n{\psi_{(l)}}_{L^{\infty}}^{\mu_{l}-1} 
       \nn{\psi_{(k)}}\nn{\psi_{(l)}}.
\end{align*}
It follows from the generalized Gagliardo-Nierenberg inequality that for any integer $0\le i\le m$
\begin{equation}
  \label{int-ineq}
  \n{\psi_{(i)}}_{L^{\infty}} \les \n{\psi}_{m}^{\frac{i+1/2}{m}}\n{\psi}_{0}^{1-\frac{i+1/2}{m}},\qquad
  \n{\psi_{(i)}}_{0} \les \n{\psi}_{m}^{\frac{i}{m}}\n{\psi}_{0}^{1-\frac{i}{m}},
\end{equation}
where $a\les b$ means $a\le c \cdot b$ with $c$ being a multiplicative constant which is independent of $\psi$ and only depends on the parameters $i$ and $m$. We thus obtain
\begin{equation}
  \label{q-mu-est}
  \int_{\T} \abs{\mathfrak{q}_{\mu}} \,\dx \les 
  \n{\psi}_{m}^{\p*{\sum_{i=0}^{m-1} \frac{i+1/2}{m}\mu_{i}} - \frac{1}{m}}
  \n{\psi}_{0}^{\p*{\sum_{i=0}^{m-1} \p*{1-\frac{i+1/2}{m}}\mu_{i}} 
  +\frac{1}{m}}.
\end{equation}
In the second case where $\mu_{k} = \abs{\mu}\ge 4$ while all other coefficients of $\mu$ vanish, we get
\begin{align*}
  \int_{\T} \abs{\mathfrak{q}_{\mu}} \,\dx
   &\le \int \p*{\prod_{0\le i\neq k,l}^{m-1} \abs{\psi_{(i)}}^{\mu_{i}}}
             \abs{\psi_{(k)}}^{\mu_{k}-2}
             \abs{\psi_{(k)}}^{2} \,\dx\\
  &\le \p*{\prod_{0\le i\neq k,l}^{m-1} \n{\psi_{(i)}}_{L^{\infty}}^{\mu_{i}}}
       \n{\psi_{(k)}}_{L^{\infty}}^{\mu_{k}-2}
       \nn{\psi_{(k)}}^{2}.
\end{align*}
The interpolation inequality~\eqref{int-ineq} then yields estimate~\eqref{q-mu-est} also in this case.

Recall from~\eqref{p-const} that $\sum_{i=0}^{m-1} (i+1/2)\mu_{i} = 2m+2 - \abs{\mu} / 2$, hence
\begin{align*}
  \sum_{i=0}^{m-1} \frac{i+1/2}{m}\mu_{i} - \frac{1}{m}
  &= \frac{2m+2-\abs{\mu}/2}{m} - \frac{1}{m}
  = 2 - \frac{\abs{\mu}-2}{2m} < 2,
\end{align*}
where in the last line we used that $4 \le \abs{\mu} \le 2m+2$. Similarly,
\begin{align*}
  \sum_{i=0}^{m-1} \p*{1-\frac{i+1/2}{m}}\mu_{i} + \frac{1}{m}
   = \abs{\mu} - 2 +  \frac{\abs{\mu}-4}{2m} 
   = \p*{1+\frac{1}{2m}}(\abs{\mu}-2).
\end{align*}
Both identities together yield in view of~\eqref{q-mu-est}
\[
  \int_{\T} \abs{\mathfrak{q}_{\mu}} \,\dx \les
  \n{\psi}_{m}^{2 - \frac{\abs{\mu}-2}{2m}}
  \n{\psi}_{0}^{\p*{1+\frac{1}{2m}}(\abs{\mu}-2)}.
\]
Applying Young's inequality to the latter with
\[
  p  = \frac{2}{2 - \frac{\abs{\mu}-2}{2m}},\qquad 
  p' = \frac{4m}{\abs{\mu}-2},
\]
then finally gives for any $\ep > 0$
\begin{align*}
  \int_{\T} \abs{\mathfrak{q}_{\mu}} \,\dx 
  &\le 
  \ep \n{\psi}_{m}^{\p*{2 - \frac{\abs{\mu}-2}{2m}}p}
  +
  C_{\ep,m}\n{\psi}_{0}^{\p*{1+\frac{1}{2m}}(\abs{\mu}-2)p'}\\
  &= \ep \n{\psi}_{m}^{2} + C_{\ep}\n{\psi}_{0}^{4m + 2},
\end{align*}
where $C_{\ep,m}$ is an absolute constant independent of $\psi$. Since this estimate holds for any monomial of $p_{2m}$, the final claim follows with the fact that $\n{\psi}_{m}^{2} \le 2^{m-1}\nn{\partial_{x}^{m}\psi}^{2} + 2^{m-1}\nn{\psi}^{2}$.\qed
\end{proof}

\begin{proof}[Proof of Theorem~\ref{act-sob-est} (i)]
Recall from Lemma~\ref{I-H}, that
\[
  \n{I(\ph)}_{\ell^{1}_{2m}}
   \le \lin{16\pi}^{2m}(1+\n{\ph}_{1})^{4m}\n{\ph}_{0}^2 + (-1)^{m+1}2^{m}\Hc_{2m+1}(\ph),
\]
which together with the estimate of the Hamiltonian from Lemma~\ref{pm-Hm-est} with $\ep = 1$ yields
\[
  \n{I(\ph)}_{\ell_{2m}^{1}}
   \le
  2^{m}
  \n{\ph}_{m}^{2}
  +
  c_m^2(1+\n{\ph}_{1})^{4m}\n{\ph}_{0}^{2},\qquad m\ge 1,
\]
where $c_{m}$ is an absolute constant depending only on $m$.\qed
\end{proof}

\section{Estimating the Sobolev Norms}
\label{s:sob-est}

We now turn to the converse problem of estimating the Sobolev norms of the potential in terms of weighted norms of its actions on level one. Our starting point is the identity
\begin{align}
  \label{sob-rep}
  \frac{1}{2}\n{\ph_{(m)}}_0^2
   = 4^{m}\sum_{n\in\Z} J_{n,2m+1} - \int_\T p_{2m}\,\dx,\qquad m\ge 1,
\end{align}
which is obtained by combining Corollary~\ref{h-form} and the trace formula~\eqref{tf-k}.
The key step is estimating the actions $J_{n,2m+1}$ in terms of $I_{n}$. Subsequently, $p_{2m}$ is estimated by Lemma~\ref{pm-Hm-est}.
The main difficulty is to estimate the actions $J_{n,2m+1}$ below the threshold of $\lin{n}$ provided by Proposition~\ref{In-Jn-est}. Even though there are only finitely many of them, they cannot be controlled by the $L^{2}$-norm $\n{\ph}_{0}$ as one can translate the spectrum of $\ph$ without changing $\n{\ph}_{0}$. Instead, we use the $H^{1}$-norm $\n{\ph}_{1}$, and provide estimates of $\n{\ph}_{1}$ in terms of $I_{n}$ by separate arguments which were inspired by work of Korotyaev~\cite{Korotyaev:2005fb,Korotyaev:2010ft}.

\begin{lemma}
\label{H3-est}
Uniformly for all $\ph\in H_r^1$,
\[
  \Hc_3(\ph) - 2\Hc_1^2(\ph) \le \sum_{n\in\Z} (2n\pi)^2 I_n(\ph).
\]
In particular, $\Hc_{3}(\ph) \le \n{I(\ph)}_{\ell^{1}_{2}} + 2\n{I(\ph)}_{\ell^{1}}^2$ and
\[
  \frac{1}{3}\n{\ph}_1^2 \le \n{I(\ph)}_{\ell^{1}_{2}} + \n{I(\ph)}_{\ell^{1}}^2.
\]
\end{lemma}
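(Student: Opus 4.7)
The starting point is to combine the trace formulae with the mean value representation of the actions. From Theorem~\ref{tf} at level $k=3$ together with Parseval's identity~\eqref{tf-1},
\[
  \Hc_3(\ph)=4\sum_{n\in\Z}J_{n,3}(\ph),\qquad \Hc_1(\ph)=\sum_{n\in\Z}I_n(\ph),
\]
while \eqref{zt-In-Jn} (the case $m=1$ of Proposition~\ref{In-Jn-est}) gives $J_{n,3}=\zt_{n,1}^{2}I_{n}$ with $\zt_{n,1}\in[\lm_n^-,\lm_n^+]$. Substituting, the first assertion of the lemma is equivalent to the weighted sum inequality
\[
  \sum_{n\in\Z}\bigl[(2\zt_{n,1})^2-(2n\pi)^2\bigr]\,I_n\le 2\Bigl(\sum_{n\in\Z}I_n\Bigr)^{2}.
\]

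The heart of the proof is this weighted estimate. Crucially, it is \emph{not} a pointwise statement on $4\zt_{n,1}^{2}$: already for the constant potential $\ph=(c,\ob c)$ one has $(2\zt_{n,1})^2-(2n\pi)^2=4\Hc_1$ for every $n\ne 0$, exceeding $2\Hc_1$, but there $I_n=0$, so only the correlation with the weights $I_n$ can produce the factor $2$. My plan is to expand by means of the integral representation~\eqref{Jn-fn},
\[
  4J_{n,3}-(2n\pi)^{2}I_{n}
   =\frac{8}{\pi}\int_{\lm_n^-}^{\lm_n^+}(\lm-n\pi)(\lm+n\pi)\,f_n(\lm)\,\dlm,
\]
split $\lm+n\pi=2n\pi+(\lm-n\pi)$, and thereby isolate a linear piece $16n\pi\,(J_{n,2}-n\pi I_{n})$ whose sum is controlled via the second level trace formula $\sum_n J_{n,2}=\tfrac{\ii}{2}\Hc_2$ together with the gap estimates of Proposition~\ref{gap-est}, plus a quadratic remainder bounded by the gap lengths squared. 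A cleaner alternative -- which I expect is essentially what the paper executes -- is to evaluate the sum directly for finite-gap potentials via contour integration against the Laurent expansion of $F$ from Lemma~\ref{F-asy}, and then extend to all of $H_r^{1}$ by analyticity of both sides. The main obstacle is the bookkeeping of signs and cancellations needed to land exactly on the constant $2$ rather than a larger one.

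The ``in particular'' statements are then formal consequences. Adding $2\Hc_1^2$ to both sides of the main inequality, and using $\Hc_1=\n{I(\ph)}_{\ell^{1}}$ together with $(2n\pi)^2\le\lin{2n\pi}^{2}$, immediately yields $\Hc_3(\ph)\le\n{I(\ph)}_{\ell^{1}_{2}}+2\n{I(\ph)}_{\ell^{1}}^{2}$. For the final Sobolev bound, apply $\lin{2n\pi}^2\le 2(1+(2n\pi)^2)$ termwise to obtain
\[
  \n{\ph}_1^2\le 2\n{\ph}_0^2+2\n{\ph_{(1)}}_0^2
   =4\Hc_1+4\int_{\T}|\psi'|^{2}\,\dx
   \le 4\Hc_1+4\Hc_3,
\]
substitute the bound on $\Hc_3$ just obtained, and collect terms using $\n{I(\ph)}_{\ell^{1}}\le\n{I(\ph)}_{\ell^{1}_{2}}$ together with the sharper inequality $\int_{\T}|\psi'|^{2}\,\dx\le\Hc_3-\Hc_1^2$ (Cauchy--Schwarz giving $\int|\psi|^{4}\ge\Hc_1^2$) to reach the displayed form with its constant $1/3$.
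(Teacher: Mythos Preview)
Your proposal correctly identifies the paper's actual method as the ``cleaner alternative'' (contour integration of a power of $F$ against its Laurent expansion for finite-gap potentials, followed by a limiting argument), but you do not carry it out, and the primary approach you sketch has a genuine gap.

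The splitting $4J_{n,3}-(2n\pi)^2 I_n = 8n\pi(J_{n,2}-n\pi I_n) + \frac{8}{\pi}\int(\lm-n\pi)^2 f_n\,\dlm$ is correct (up to a harmless factor), but your plan to control the linear piece via the level-$2$ trace formula cannot work as stated: that formula gives $\sum_n J_{n,2}$, not the weighted sum $\sum_n n\pi\, J_{n,2}$ you actually need. Bringing in Proposition~\ref{gap-est} does not help either, since those bounds are expressed in terms of $\n{\ph}_1$---precisely the quantity the lemma is meant to control---and in any case would not deliver the sharp constant~$2$. You anticipate this yourself (``the main obstacle is the bookkeeping \dots to land exactly on the constant $2$''), but it is not merely bookkeeping: this route does not close.

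What the paper does is exactly the alternative you name. For a finite-gap potential one computes $\frac{1}{\pi}\int_{C_r} F^3\,\dlm$ in two ways. From the Laurent expansion of Lemma~\ref{F-asy} the residue gives $\frac{3}{4}(\Hc_3-2\Hc_1^2)$. Shrinking the contour to the gaps and inserting $F(\lm\pm\ii 0)=\pm f_n(\lm)-\ii n\pi$ from Lemma~\ref{F-prop} yields $\Re\bigl(F^3(\lm-\ii 0)-F^3(\lm+\ii 0)\bigr)=-2f_n^3(\lm)+6(n\pi)^2 f_n(\lm)$; since $f_n\ge 0$ one simply drops the negative cubic term and recognises the remainder via~\eqref{Jn-fn} as $\frac{3}{4}\sum_n(2n\pi)^2 I_n$. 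The constant $2$ falls out of the cube expansion automatically. Extension to $H_r^1$ is by continuity, not analyticity.

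Your derivation of the two ``in particular'' claims is essentially correct. The paper uses the slightly sharper $\lin{2n\pi}^2\le\frac{3}{2}\bigl(1+(2n\pi)^2\bigr)$ (valid because $\abs{2n\pi}\ge 2\pi$ for $n\ne 0$) in place of your factor~$2$, combined with $\int_{\T}\abs{\psi}^4\,\dx\ge\Hc_1^2$ exactly as you propose, which is what produces the constant $1/3$ directly.
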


\begin{proof}
As the Hamiltonians and the actions are continuous on $H^{1}_{r}$, it suffices to consider the case of a finite gap potential. Let $C_{r}$ denote a sufficiently large circle enclosing all open gaps of $\ph$, then the primitive $F$ of $\om$ defined in section~\ref{s:setup} is analytic outside $C_{r}$ and its Laurent expansion is given by Lemma~\ref{F-asy}. Thus, by the Residue Theorem,
\[
  \frac{1}{\pi} \int_{C_{r}} F^{3}(\lm)\,\dlm
   = \frac{3}{8\ii \pi} \int_{C_{r}} \frac{1}{\lm}(\Hc_{3}-2\Hc_{1}^{2}) \,\dlm
   = \frac{3}{4}(\Hc_{3}-2\Hc_{1}^{2}).
\]
The right hand side is real as $\ph$ is of real type, and
\[
  \Re \int_{C_{r}} F^{3}(\lm)\,\dlm
   = \sum_{n\in\Z} \int_{\lm_{n}^{-}}^{\lm_{n}^{+}} \Re\bigl(F^{3}(\lm-\ii 0)
   - F^{3}(\lm+\ii 0)\bigr)\ \dlm.
\]
Furthermore, by Lemma~\ref{F-prop} we have for $\lm_{n}^{-} < \lm < \lm_{n}^{+}$,
\[
  \Re\bigl(F^{3}(\lm-\ii 0) - F^{3}(\lm+\ii 0)\bigr) = -2 f_{n}^{3}(\lm) + 6 (n\pi)^{2} f_{n}(\lm),
\]
and since $f_{n}$ is nonnegative, we conclude with \eqref{Jn-fn}
\[
  \frac{1}{\pi} \int_{C_{r}} F^{3}(\lm)\,\dlm
   \le \sum_{n\in\Z} \frac{6}{\pi}\int_{\lm_{n}^{-}}^{\lm_{n}^{+}} (n\pi)^{2} f_{n}(\lm)\,\dlm
   = \frac{3}{4}\sum_{n\in\Z}(2n\pi)^{2}I_{n}.
\]
This proves the first claim. Note that $\lin{2n\pi}^{2} \le \frac{3}{2}(1+(2n\pi)^{2})$ for $n\in\Z$, so
\[
  \frac{1}{3}\n{\ph}_{1}^{2}
  \le \frac{1}{2}(\n{\ph_{x}}_{0}^{2} + \n{\ph}_{0}^{2})
   =
  \Hc_3 - \int_\T \abs{\psi}^4\,\dx + \Hc_{1}.
\]
Since $\int_{\T} \abs{\psi}^{4}\,\dx \ge \Hc_{1}^{2}(\ph)$ the second claim follows with
\[
  \frac{1}{3}\n{\ph}_{1}^{2}
   \le \Hc_3 - 2\Hc_1^2 + \Hc_1^2 + \Hc_{1}
   \le \n{I(\ph)}_{\ell^{1}_{2}} + \n{I(\ph)}_{\ell^{1}}^{2}.\qed
\]
\end{proof}

\begin{proof}[Proof of Theorem~\ref{act-sob-est} (ii).]
The case $m=1$ is an immediate corollary of the lemma above.
For the case $m\ge 2$ we find with~\eqref{sob-rep}
\begin{align*}
  \frac{1}{2}\n{\ph_{(m)}}_0^2
   = 
  \sum_{n\in\Z} 4^{m}J_{n,2m+1} - \int_\T p_{2m}\,\dx.
\end{align*}
Choosing $\ep = 1/4$ in Lemma~\ref{pm-Hm-est} then gives
\[
  \frac{1}{4}\n{\ph_{(m)}}_0^2
   \les 
  \sum_{n\in\Z} 4^{m}J_{n,2m+1} + \n{\ph}_{0}^{4m+2}
   \les 
  \sum_{n\in\Z} 4^{m}J_{n,2m+1} + \n{I(\ph)}_{\ell^{1}}^{2m+1},
\]
where we applied the trace formula~\ref{tf-1} in the last step. Finally, in view of Lemma~\ref{h-a-est} below
\begin{align*}
  \frac{1}{4}\n{\ph_{(m)}}_0^2
    \le 
  \n{I}_{\ell^{1}_{2m}} + c_{m}(1+\n{I}_{\ell^{1}_{2}})^{4m-3}\n{I}_{\ell^{1}}.\qed
\end{align*}
\end{proof}

\begin{lemma}
\label{h-a-est}
For each $m\ge 1$,
\begin{align*}
  \sum_{n\in\Z}
  4^{m}J_{n,2m+1}
   \le 
  \n{I(\ph)}_{\ell^{1}_{2m}} +
  (64\pi)^{2m}(1+\n{I(\ph)}_{\ell^{1}})^{2m-1}\n{I(\ph)}_{\ell^{1}_{2}}^{2m-1},
\end{align*}
uniformly for all $\ph\in H_r^{m}$.
\end{lemma}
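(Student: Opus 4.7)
The plan is to split the sum $\sum_{n\in\Z} 4^m J_{n,2m+1}$ at the threshold $N$ determined by $N+1 \ge 8\n{\ph}_1^2 > N$, so that the two regimes provided by Proposition~\ref{In-Jn-est} can be applied separately. This mirrors the splitting strategy already used in the proof of Lemma~\ref{I-H}.

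For the high-frequency range $\lin{n}\ge 8\n{\ph}_1^2$ (which automatically excludes $n=0$ once $\n{\ph}_{1}>0$), the first alternative of Proposition~\ref{In-Jn-est} gives
\[
  4^m J_{n,2m+1} \le \lin{2n\pi}^{2m} I_n,
\]
so summing over these indices contributes at most $\n{I(\ph)}_{\ell^1_{2m}}$, which is precisely the first term on the right-hand side. For the low-frequency indices $\lin{n} < 8\n{\ph}_1^2$, I would use the second alternative
\[
  4^m J_{n,2m+1} \le (16\pi)^{2m}\n{\ph}_1^{4m} I_n,
\]
and bound the sum of $I_n$ over these indices trivially by $\n{I(\ph)}_{\ell^1}$, giving a low-frequency contribution of at most $(16\pi)^{2m}\n{\ph}_1^{4m}\n{I(\ph)}_{\ell^1}$.

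The next step is to eliminate the explicit $H^1$-dependence using Lemma~\ref{H3-est}, which yields
\[
  \n{\ph}_{1}^{2} \le 3\bigl(\n{I(\ph)}_{\ell^{1}_{2}} + \n{I(\ph)}_{\ell^{1}}^{2}\bigr)
                   \le 3\,\n{I(\ph)}_{\ell^{1}_{2}}\bigl(1+\n{I(\ph)}_{\ell^{1}}\bigr),
\]
where the second inequality uses $\n{I}_{\ell^{1}}^{2}\le \n{I}_{\ell^{1}}\n{I}_{\ell^{1}_{2}}$. Raising to the $2m$-th power gives
\[
  \n{\ph}_{1}^{4m}
    \le 3^{2m}\,\n{I(\ph)}_{\ell^{1}_{2}}^{2m}\bigl(1+\n{I(\ph)}_{\ell^{1}}\bigr)^{2m},
\]
and multiplying by $(16\pi)^{2m}\n{I(\ph)}_{\ell^{1}}$ produces a remainder of the form $(48\pi)^{2m}\n{I(\ph)}_{\ell^{1}_{2}}^{2m}(1+\n{I(\ph)}_{\ell^{1}})^{2m}\n{I(\ph)}_{\ell^{1}}$.

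The main obstacle — and the only non-routine part of the write-up — is reshaping this last expression into the claimed form $(64\pi)^{2m}(1+\n{I(\ph)}_{\ell^{1}})^{2m-1}\n{I(\ph)}_{\ell^{1}_{2}}^{2m-1}$. Here one exploits the monotone relation $\n{I}_{\ell^{1}}\le \n{I}_{\ell^{1}_{2}}$ to trade one factor of $(1+\n{I}_{\ell^{1}})\n{I}_{\ell^{1}_{2}}\n{I}_{\ell^{1}}$ against the available exponent budget, absorbing the remaining numerical factor into $(64/48)^{2m} = (4/3)^{2m}$. This bookkeeping step must be done carefully in order not to waste the constant, but it is a purely algebraic manipulation once the correct decomposition is fixed; the conceptual content of the proof is entirely in the splitting at the $H^1$-threshold together with Proposition~\ref{In-Jn-est} and Lemma~\ref{H3-est}.
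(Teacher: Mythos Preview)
Your high-frequency treatment and the use of Lemma~\ref{H3-est} are exactly as in the paper. The gap is in the low-frequency part: the final ``purely algebraic'' reshaping you describe cannot be carried out. Your low-frequency contribution is
\[
  (48\pi)^{2m}\,\n{I}_{\ell^{1}_{2}}^{2m}\bigl(1+\n{I}_{\ell^{1}}\bigr)^{2m}\n{I}_{\ell^{1}},
\]
which has total degree $4m+1$ in the action norms, whereas the target expression $(64\pi)^{2m}(1+\n{I}_{\ell^{1}})^{2m-1}\n{I}_{\ell^{1}_{2}}^{2m-1}$ has degree $4m-2$. No amount of trading via $\n{I}_{\ell^{1}}\le\n{I}_{\ell^{1}_{2}}$ can lower the degree; for large $\n{I}_{\ell^{1}_{2}}$ your bound is genuinely larger, so the inequality you need simply fails.

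The missing idea is to compare $J_{n,2m+1}$ not with $I_{n}$ but with $J_{n,3}$ for the low-frequency indices. From \eqref{Jn-fn} and the mean-value theorem (applied with the nonnegative weight $\lm^{2}f_{n}(\lm)$) one gets $J_{n,2m+1}=\tilde\zt_{n,m}^{\,2m-2}J_{n,3}$ with $\tilde\zt_{n,m}\in[\lm_{n}^{-},\lm_{n}^{+}]$, so that for $\lin{n}<8\n{\ph}_{1}^{2}$
\[
  4^{m}J_{n,2m+1}\le (16\pi)^{2m-2}\n{\ph}_{1}^{4m-4}\cdot 4J_{n,3}.
\]
This saves exactly two powers of $\n{\ph}_{1}^{2}$ compared to your estimate. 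Now use the trace formula $\sum_{n}4J_{n,3}=\Hc_{3}$ together with Lemma~\ref{H3-est}, which gives $\Hc_{3}\le \n{I}_{\ell^{1}_{2}}+2\n{I}_{\ell^{1}}^{2}$, and convert $\n{\ph}_{1}^{4m-4}$ via Lemma~\ref{H3-est} as you did. The resulting exponents then match the claim, and the numerical bookkeeping goes through with the stated constant $(64\pi)^{2m}$.
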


\begin{proof}
Let $N+1 \ge 8\n{\ph}_1^2 > N$, then by Proposition~\ref{In-Jn-est}
\[
  \sum_{\abs{n} > N} 4^{m}J_{n,2m+1} \le \sum_{\abs{n}> N} \lin{2n\pi}^{2m} I_n,
\]
while for the remaining actions $J_{n,2m+1} = \tilde\zt_{n,m}^{2m-2}J_{n,3}$ and hence
\begin{align*}
  \sum_{\abs{n} \le N} 4^{m}J_{n,2m+1}
   &\le 
  (16\pi)^{2m-2}\n{\ph}_1^{4m-4} \sum_{\abs{n}\le N} 4J_{n,3}\\
   &\le 
  (64\pi)^{2m-2}(1+\n{I}_{\ell^{1}})^{2m-2}\n{I}_{\ell_{2}^{1}}^{2m-2}\sum_{\abs{n}\le N} 4J_{n,3}.
\end{align*}
By the trace formula \eqref{tf-k} and Lemma~\ref{H3-est} we finally obtain
\[
  \sum_{n\in\Z} 4J_{n,3}
   = \Hc_3 
   \le  \n{I(\ph)}_{\ell^{1}_{2}} + 2\n{I(\ph)}_{\ell^{1}}^{2}.\qed
\]
\end{proof}

\section{Actions, Weighted Sobolev Spaces, and Gap Lengths}
\label{s:act-west}

The case of estimating the action variables of $\ph$ in standard Sobolev spaces $H_{r}^{m}$ with integers $m\ge 1$ is somewhat special due to the presence of the trace formula~\eqref{tf-k}. When arbitrary weighted Sobolev spaces $H_{r}^{w}$ are considered, there is no identity known to exist relating $\n{\ph}_{w}$ to Hamiltonians of the \nls-hierarchy. Albeit, even in the case of weighted Sobolev spaces, the regularity properties of $\ph$ are well known to be closely related to the decay properties of the  gap lengths $\gm_{n}(\ph)$ -- see e.g. \cite{Djakov:2006ba,Kappeler:2009kp} and section~\ref{s:trap-sp}. Moreover,
\begin{align}
  \label{In-gmn}
  \frac{4I_{n}}{\gm_{n}^{2}} = 1 + \ell_{n}^{2}
\end{align}
is known to hold locally uniformly on $L^{2}_{r}$ and hence uniformly on bounded subsets of $H^{1}_{r}$ -- see \cite{Grebert:2014iq}. In this section we prove a quantitative version of \eqref{In-gmn} which is quadratic in $\n{\ph}_{1}$ on all of $H^{1}_{r}$. From this and the estimates of the gap lengths given in section~\ref{s:trap-sp} we then obtain Theorem~\ref{act-west}.

To set the stage, let $\ph\in W$ and recall from \eqref{Jn-om},
\[
  I_{n} = \frac{1}{\pi}\int_{\Gm_{n}} \lm \om = -\frac{1}{\pi}\int_{\Gm_{n}} (\lm_{n}^{\ld}-\lm)\om.
\]
Here the latter identity follows as $\om$ is closed around the gap. In the case $I_{n}\neq 0$, or equivalently $\gm_{n}\neq 0$, we shrink the contour $\Gm_{n}$ to the straight line $[\lm_{n}^{-},\lm_{n}^{+}]$ and insert the product representation \eqref{om} of $\om$, to obtain
\[
  I_{n} = -\frac{2}{\pi}\int_{\lm_{n}^{-}}^{\lm_{n}^{+}} 
  \frac{(\lm_{n}^{\ld}-\lm)^{2}}{\ii\vs_{n}(\lm)}\chi_{n}(\lm)\,\dlm,
  \qquad
  \chi_{n}(\lm) = \prod_{n\neq m} \frac{\lm_{m}^{\ld}-\lm}{\vs_{m}(\lm)}.
\]
It follows with the substitution $\lm = \tau_{n} + t\gm_{n}/2$ that
\[
  \frac{4I_{n}}{\gm_{n}^{2}} = \frac{2}{\pi}\int_{-1}^{1}
   \frac{(t-t_{n})^{2}}{\sqrt[+]{1-t^{2}}}\chi_{n}(\tau_{n}+t\gm_{n}/2) \,\dt,
  \qquad
  t_{n} = 2(\lm_{n}^{\ld}-\tau_{n})/\gm_{n}.
\]
By Lemma~\ref{ld-tau} there exists an open connected neighbourhood $\hat{W}\subset W$ of $L^{2}_{r}$ such that $\abs{\lm_{m}^{\ld}-\tau_{m}} \le \abs{\gm_{m}}$ for all $m\in\Z$, hence $\abs{t_{n}} \le 2$, and thus
\begin{align}
  \label{In-gmn-2}
  \abs*{\frac{4I_{n}}{\gm_{n}^{2}}} \le 9\abs{\chi_{n}}_{[\lm_{n}^{-},\lm_{n}^{+}]}.
\end{align}
To get a quantitative version of \eqref{In-gmn} we thus need a uniform estimate of $\chi_{n}$ .

\begin{lemma}
On $H^{1}_{c}\cap \hat{W}$  for any $\abs{n} \ge \lin{N} \ge 8\n{\ph}_{1}^{2}$,
\[
  \abs{\chi_{n}}_{[\lm_{n}^{-},\lm_{n}^{+}]}
   \le \e^{2}\left(\frac{\abs{n} + N+3/5}{\abs{n} - N-3/5}\right)
   \le 2^{9}(1+\n{\ph}_{1}^{2}).
\]
\end{lemma}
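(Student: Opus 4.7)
The plan is to take the logarithm of $\abs{\chi_n(\lm)} = \prod_{m\ne n}\abs{(\lm_m^{\ld} - \lm)/\vs_m(\lm)}$ and split the resulting sum into a high-index tail $\abs{m}>N$ and a low-index part $\abs{m}\le N$, attributing the factors $\e^{2}$ and $(\abs{n}+N+3/5)/(\abs{n}-N-3/5)$ to these two contributions respectively.

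For the high-index tail, Theorem~\ref{ev-trap} places $\lm_m^{\pm}$ and $\lm_m^{\ld}$ within $\pi/5$ of $m\pi$, so for $\lm\in[\lm_n^{-},\lm_n^{+}]$ one has $\abs{\tau_m-\lm}\ge \pi\abs{m-n}-2\pi/5$. The individual gap estimate~\eqref{i-gap} shows this is well above $\abs{\gm_m}/2$, permitting the factorisation $\vs_m(\lm) = (\tau_m - \lm)\sqrt[+]{1 - \gm_m^{2}/(4(\tau_m-\lm)^{2})}$. Writing $\lm_m^{\ld} - \lm = (\tau_m - \lm) + (\lm_m^{\ld} - \tau_m)$ with $\abs{\lm_m^{\ld} - \tau_m}\le \abs{\gm_m}$ from Lemma~\ref{ld-tau}, each logarithm is dominated by a constant multiple of $\abs{\gm_m}/\abs{\tau_m-\lm} + \abs{\gm_m}^{2}/\abs{\tau_m-\lm}^{2}$. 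Applying Cauchy--Schwarz together with the weighted gap bound of Proposition~\ref{gap-est} and $\sum_{\abs{m}>N,\,m\neq n}\lin{2m}^{-2}\abs{m-n}^{-2} = O(\abs{n}^{-2})$, the total is at most $2$ in the regime $\abs{n}\ge \lin{N}\ge 8\n{\ph}_{1}^{2}$, yielding the $\e^{2}$ factor.

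For the low-index part, Theorem~\ref{ev-trap} confines all $\lm_m^{\pm}$ with $\abs{m}\le N$ to the box $\abs{\Re(\cdot)}\le (N+1/2)\pi,\;\abs{\Im(\cdot)}\le\n{\ph}_{1}$. Working first with $\ph\in L^{2}_{r}$ (the extension to $\hat W$ follows from the analyticity of $\chi_n$ in $\ph$), the segment $[\lm_n^{-},\lm_n^{+}]$ lies strictly to one side of all low-index eigenvalues, and from $\lm_m^{-}\le\lm_m^{\ld}\le\lm_m^{+}$ we deduce
\[
  \abs*{\frac{\lm_m^{\ld} - \lm}{\vs_m(\lm)}} \le \sqrt{\frac{\lm - \lm_m^{-}}{\lm - \lm_m^{+}}}.
\]
The further interlacing $\lm_{m-1}^{+}\le\lm_m^{-}$ makes consecutive factors telescope, leaving
\[
  \prod_{\abs{m}\le N}\frac{\lm-\lm_m^{-}}{\lm-\lm_m^{+}} \le \frac{\lm-\lm_{-N}^{-}}{\lm-\lm_N^{+}}.
\]
The box bounds give $\lm - \lm_{-N}^{-} \le (\abs{n}+N+7/10)\pi$ and $\lm - \lm_N^{+} \ge (\abs{n}-N-7/10)\pi$; a short elementary calculation (valid for $\abs{n}\ge N+1$) then converts the resulting $\sqrt{(\abs{n}+N+7/10)/(\abs{n}-N-7/10)}$ into the stated linear ratio $(\abs{n}+N+3/5)/(\abs{n}-N-3/5)$.

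The concluding inequality $\le 2^{9}(1+\n{\ph}_{1}^{2})$ then follows by choosing the minimal admissible $N$, i.e.\ $N$ with $\lin{N}\ge 8\n{\ph}_{1}^{2} > N$, and taking the worst case $\abs{n}=\lin{N}$, so that the ratio becomes $O(N)=O(\n{\ph}_{1}^{2})$. The principal delicacy lies in the low-index telescoping and the tight tracking of numeric constants; the high-index estimate is essentially routine once Theorem~\ref{ev-trap} and Proposition~\ref{gap-est} are in hand.
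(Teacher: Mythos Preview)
The overall architecture matches the paper: split $\chi_n$ into high-index ($|m| > N$) and low-index ($|m| \le N$) factors, bound the high part by $\e^{2}$ via Cauchy--Schwarz and Proposition~\ref{gap-est}, and telescope the low part. Your high-index argument is essentially the paper's.

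For the low-index part, your telescoping differs from the paper's and, for real-type $\ph$, is valid and in fact yields the sharper bound $\sqrt{(\lm - \lm_{-N}^-)/(\lm - \lm_N^+)}$. The paper instead uses the isolating neighbourhoods: since $\lm_m^{\ld}, \lm_m^{\pm} \in U_m$ with the $U_m$ disjoint discs centred on the real line, one has $|\lm_{m-1}^{\pm} - \lm| > |\lm_m^{\ld} - \lm|$ for all $\ph \in \hat{W}$, and then shifts indices to obtain the single ratio $\bigl(|\tau_{-N} - \lm| + |\gm_{-N}|\bigr)/\bigl||\tau_N - \lm| - |\gm_N|/2\bigr|$.

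However, your parenthetical step ``the extension to $\hat W$ follows from the analyticity of $\chi_n$ in $\ph$'' is a genuine gap. The lemma is stated on $H^{1}_{c}\cap \hat{W}$ and is needed there for the subsequent Proposition and for the proof of Theorem~\ref{act-west}. For complex $\ph$ the eigenvalues $\lm_m^{\pm}, \lm_m^{\ld}$ need not be real, so neither the ordering $\lm_m^{-} \le \lm_m^{\ld} \le \lm_m^{+}$ nor the interlacing $\lm_{m-1}^{+} \le \lm_m^{-}$ is available, and your telescoping collapses. Analyticity of $\chi_n$ in $\ph$ does not propagate a sup-norm inequality from the real subspace $L^{2}_{r}$ to an open complex neighbourhood: a function analytic on a complex domain and bounded on a real slice can be arbitrarily large off that slice. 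The paper's argument avoids this by working directly with the isolating-disc structure, which persists for all $\ph\in\hat{W}$; you would need to replace the real-ordering argument by such a disc-based one in this step.
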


\begin{proof}
Suppose $\ph\in H^{1}_{c}\cap \hat{W}$ and choose $\lin{N} \ge 8 \n{\ph}_{1}^{2} > N$. For $\abs{n} \ge N$ we split the product $\chi_{n}$ into two parts,
\[
  \chi_{n}(\lm) = 
  \biggl(\prod_{{\abs{m} \le N}} \frac{\lm_{m}^{\ld}-\lm}{\vs_{m}(\lm)}\biggr)
  \biggl(\prod_{\atop{n\neq m}{\abs{m} > N}} \frac{\lm_{m}^{\ld}-\lm}{\vs_{m}(\lm)}\biggr).
\]
If $\abs{k} > N$, then by Theorem~\ref{ev-trap}
\[
  \abs{\lm_{k}^{\pm}-k\pi} \le \frac{\n{\ph}_{1}^{2}}{\lin{k}}
   + \frac{\sqrt{2}\n{\ph}_{1}}{\lin{2k}}  \le \pi/8,
\]
where we used that $\lin{2k} \ge 3\lin{k}/2$. Thus, for $\abs{m},\abs{n} > N$ and $\lm\in [\lm_{n}^{-},\lm_{n}^{+}]$,
\[
  \abs{\tau_{m}-\lm} \ge 2\abs{n-m}.
\]
Further, $\abs{\gm_{m}} \le 4\n{\ph}_{1}/\lin{2m}$ by the individual gap estimate {\eqref{i-gap}}, hence
\[
  \abs*{\frac{\gm_{m}/2}{\tau_{m}-\lm}}
   \le \frac{\n{\ph}_{1}}{\lin{2m}\abs{n-m}}
   \le 1/4.
\]
It follows that $\abs{\vs_{m}(\lm)} \ge \abs{\tau_{m}-\lm} - \abs{\gm_{m}}/2$. Moreover, $\abs{\lm_{m}^{\ld} - \tau_{m}} \le \abs{\gm_{m}}$ as $\ph\in\hat{W}$, thus with $(1+2r)/(1-r) \le 1+4r$ for $0\le r\le 1/4$, we conclude
\[
  \abs*{\frac{\lm_{m}^{\ld}-\lm}{\vs_{m}(\lm)}}
   \le 
  \frac{\abs{\tau_{m}-\lm} + \abs{\gm_{m}}}{\abs{\tau_{m}-\lm} - \abs{\gm_{m}}/2}
   \le 1 + \frac{\abs{\gm_{m}}}{\abs{n-m}}.
\]
It follows with Cauchy-Schwarz that
\begin{align*}
  \sum_{\atop{m\neq n}{\abs{m} > N}} \frac{\abs{\gm_{m}}}{\abs{n-m}}
   &\le \left(\sum_{\atop{m\neq n}{\abs{m} > N}} \frac{1}{\lin{2m}^{2}\abs{n-m}^{2}}\right)^{1/2}
   \left(\sum_{\abs{m} > N} \lin{2m}^{2}\abs{\gm_{m}}^{2}\right)^{1/2}\\
   &\le \frac{2}{\lin{2N+2}}\left(\sum_{\abs{m} > N} \lin{2m}^{2}\abs{\gm_{m}}^{2}\right)^{1/2},
\end{align*}
and by Proposition~\ref{gap-est}
\[
  \sum_{\abs{m} > N} \lin{2m}^{2}\abs{\gm_{m}}^{2}
  	\le 6\n{\Rc_{N}\ph}_{1}^{2} + 144\n{\ph}_{1}^{4}
    \le 4\lin{N}^{2}.
\]
Therefore, by the standard estimates for infinite products,
\[
  \prod_{\atop{\abs{m}> N}{m\neq n}} \abs*{\frac{\lm_{m}^{\ld}-\lm}{\vs_{m}(\lm)}}
   \le \exp\left(4\lin{N}/\lin{2N+2} \right)
   \le \e^{2}.
\]

To estimate the remaining part of the product we note that $\lm_{m}^{\ld}$ and $\lm_{m}^{\pm}$ are contained in the isolating neighbourhood $U_{m}$, which is a complex disc centred on the real line. Thus if $\lm\in [\lm_{n}^{-},\lm_{n}^{+}]$ and $n > N$, then
\[
  \abs{\lm_{m-1}^{\pm}-\lm} > \abs{\lm_{m}^{\ld}-\lm},\qquad m\le N,
\]
and consequently
\[
  \prod_{\abs{m} \le  N} \abs*{\frac{\lm_{m}^{\ld}-\lm}{\vs_{m}(\lm)}}
  = \abs*{\frac{\lm_{-N}^{\ld}-\lm}{\vs_{N}(\lm)}}
  \prod_{-N < m \le N} \abs*{\frac{\lm_{m}^{\ld}-\lm}{\vs_{m-1}(\lm)}} 
  \le \frac{\abs{\tau_{-N}-\lm} + \abs{\gm_{-N}}}{\abs{\abs{\tau_{N}-\lm} - \abs{\gm_{N}}/2}}.
\]
By Theorem~\ref{ev-trap} we have $\abs{\gm_{\pm N}} \le \pi/5$, as well as
\[
  \abs{\pm N - n}\pi - 2\pi/5 \le \abs{\tau_{\pm N}-\lm} \le \abs{\pm N - n}\pi + 2\pi/5.
\]
It follows that
\[
  \frac{\abs{\tau_{-N}-\lm} + \abs{\gm_{-N}}}{\abs{\abs{\tau_{N}-\lm} - \abs{\gm_{N}}/2}}
  \le
  \frac{\abs{n}+N+3/5}{\abs{n}-N-3/5}\le 5\lin{N}.
\]
Similarly, for $\lm\in [\lm_{n}^{-},\lm_{n}^{+}]$ with $n < -N$.
\end{proof}

\begin{proposition}
Suppose $\ph\in H^{1}_{c}\cap \hat{W}$, then for any $\abs{n} \ge 8\n{\ph}_{1}^{2}$,
\[
  \abs{I_{n}} \le 2^{11}(1+\n{\ph}_{1}^{2})\abs{\gm_{n}}^{2}.
\]
\end{proposition}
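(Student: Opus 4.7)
The proof is essentially a one-line combination of the two preceding estimates, so the plan is just to make the bookkeeping explicit. The starting point is the contour-integral representation of $I_n$ given in equation \eqref{In-gmn-2}, namely
\[
  \left|\frac{4I_{n}}{\gm_{n}^{2}}\right| \le 9\,\abs{\chi_{n}}_{[\lm_{n}^{-},\lm_{n}^{+}]},
\]
which is already valid on all of $\hat W$ (it used only the substitution $\lm=\tau_n+t\gm_n/2$, the bound $|t_n|\le 2$ from Lemma~\ref{ld-tau}, and $\int_{-1}^{1}(t-t_n)^2/\sqrt{1-t^2}\,dt\le 9\pi/2$, requiring $\gm_n\neq 0$; the case $\gm_n=0$ is trivial since then $I_n=0$ as well).

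Next I would invoke the preceding lemma, which under the hypotheses $\ph\in H^{1}_{c}\cap\hat W$ and $|n|\ge \lin{N}\ge 8\n{\ph}_{1}^{2}$ provides the uniform bound
\[
  \abs{\chi_{n}}_{[\lm_{n}^{-},\lm_{n}^{+}]} \le 2^{9}\bigl(1+\n{\ph}_{1}^{2}\bigr).
\]
Choosing $N$ to be the largest integer with $\lin{N}\le |n|$ (so that automatically $\lin{N}\ge 8\n{\ph}_{1}^{2}$ whenever $|n|\ge 8\n{\ph}_{1}^{2}$) makes the lemma directly applicable to every index $n$ allowed by the proposition.

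Combining the two displays gives
\[
  \abs{I_{n}}\;\le\;\tfrac{9}{4}\cdot 2^{9}\bigl(1+\n{\ph}_{1}^{2}\bigr)\abs{\gm_{n}}^{2}
   \;=\;1152\bigl(1+\n{\ph}_{1}^{2}\bigr)\abs{\gm_{n}}^{2}
   \;\le\;2^{11}\bigl(1+\n{\ph}_{1}^{2}\bigr)\abs{\gm_{n}}^{2},
\]
which is the claim. No step here is delicate: all the real work sits in the preceding lemma (the infinite-product estimate of $\chi_n$), which in turn relied on the quadratic localisation Theorem~\ref{ev-trap}, the individual gap bound \eqref{i-gap}, and the $\ell^2$-control of gap lengths from Proposition~\ref{gap-est}. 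The only thing to watch is that the neighbourhood $\hat W$ is used solely to guarantee $|\lm_m^{\ld}-\tau_m|\le|\gm_m|$ (needed for both $|t_n|\le 2$ in \eqref{In-gmn-2} and the estimate of the ratios $(\lm_m^{\ld}-\lm)/\vs_m(\lm)$ inside $\chi_n$), so the result is confined to $H^{1}_{c}\cap\hat W$ as stated.
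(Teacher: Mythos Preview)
Your proof is correct and follows exactly the same approach as the paper's own proof: treat $\gm_n=0$ separately (where $I_n=0$), then combine the bound \eqref{In-gmn-2} with the preceding lemma's estimate $\abs{\chi_n}_{[\lm_n^-,\lm_n^+]}\le 2^{9}(1+\n{\ph}_1^2)$. Your explicit choice of $N$ (with $\lin{N}=|n|$) and the arithmetic $9/4\cdot 2^{9}=1152\le 2^{11}$ just spell out details the paper leaves implicit.
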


\begin{proof}
If $\gm_{n} = 0$, then $I_{n} = 0$ and the estimate clearly holds. If $\gm_{n}\neq 0$, then by \eqref{In-gmn-2} and the preceding lemma,
\[
  \abs*{{4I_{n}}/{\gm_{n}^{2}}} \le 9\abs{\chi_{n}}_{[\lm_{n}^{-},\lm_{n}^{+}]}
  \le 2^{13}(1+\n{\ph}_{1}^{2}).
\]
\end{proof}

\begin{proof}[Proof of Theorem~\ref{act-west}.]
Suppose $\ph\in H^{w}_{c}\cap \hat{W}$ and choose $N+1\ge 8\n{\ph}_{w}^{2} > N$. Then by the preceding proposition
\[
  \sum_{\abs{n} > N} w_{2n}^{2}\abs{I_{n}}
   \le 2^{11}\n{\ph}_{1}^{2}\sum_{\abs{n} > N} w_{2n}^{2}\abs{\gm_{n}}^{2},
\]
and the gap lengths may be estimated by Proposition~\ref{gap-est}
\[
  \sum_{\abs{n} > N} w_{2n}^{2}\abs{\gm_{n}}^{2} 
  \le 6\n{R_{N}\ph}_{w}^{2} + 144\n{\ph}_{w}^{4}
  \le 144(1+\n{\ph}_{w}^{2})\n{\ph}_{w}^{2}.
\]
In particular, the mapping
\[
  H^{w}_{c}\cap\hat{W} \to [0,\infty),\qquad \ph\mapsto \sum_{n\in\Z} w_{2n}^{2}\abs{I_{n}},
\]
is continuous.
Suppose $\ph$ is of real type, then the remaining actions for $\abs{n} \le N$ may be estimated by
\[
  \sum_{\abs{n} \le N} w_{2n}^{2}\abs{I_{n}}
   \le w^{2}_{2N}\sum_{n\in\Z} I_{n}
   \le w[16\n{\ph}_{w}^{2}]^{2}\n{\ph}_{0}^{2}.
\]
Since $w\in\Mc^{1}$ is growing with at least linear speed, we thus find on $H^{w}_{r}$
\[
  \sum_{n\in\Z} w_{2n}^{2} \abs{I_{n}} \le 2^{20} w[16\n{\ph}_{w}^{2}]^{2}\n{\ph}_{w}^{2}.
\]
For any nonzero potential $\ph$ this estimate extends by continuity to a complex neighbourhood of $\ph$ within $H^{w}_{c}$ with just the absolute constant doubled. On the other hand, sufficiently close to the zero potential we have $\n{\ph}_{w}^{2} < 1/8$. In this case we may choose $N=0$ such that
\[
  \sum_{n\neq 0} w_{2n}^{2}\abs{I_{n}} \le 2^{20}(1+\n{\ph}_{w})^{4}\n{\ph}_{1}^{2}.
\]
Consequently, on some sufficiently small open neighbourhood of $H_{r}^{w}$ in $H_{c}^{w}$,
\[
  \n{I(\ph)}_{\ell_{w}^{1}} \le c_{w}^{2}w[16\n{\ph}_{w}^{2}]^{2}\n{\ph}_{w}^{2},
\]
with a real constant $c_{w}$ depending only on $w_{0}$.
\end{proof}

\clearpage

\appendix

\section{\boldmath Analyticity of the primitive $F$}

\label{a:ana}

In this appendix we prove the analyticity of the primitive
\[
  F(\mu,\psi) = \frac{1}{2}\left(\int_{\lm_{0}^{-}(\psi)}^{\mu} \om(\lm,\psi) + \int_{\lm_{0}^{+}(\psi)}^{\mu}\om(\lm,\psi) \right),
\]
introduced in section~\ref{s:setup} with $\om = \frac{\dDl}{\sqrt[c]{\Dl^{2}-4}}\, \dlm$. The proof relies on the following observation.

\begin{lemma}
\label{ld-tau}
\begin{itemize}
\item[(i)]
Suppose $\ph\in W$ and $\gm_{n}(\ph) = 0$ for some $n\in\Z$, then there exists an open neighbourhood $V_{n}$ of $\ph$ such that
\[
  \abs{\lm_{n}^{\ld}-\tau_{n}} \le \abs{\gm_{n}}/2,\qquad \psi\in V_{n}.
\]
\item[(ii)]
For each $\ph\in L^{2}_{r}$ there exists an open neighbourhood $V\subset L^{2}_{c}$ such that
\[
  \abs{\lm_{m}^{\ld}-\tau_{m}} \le \abs{\gm_{m}},\qquad m\in\Z,\quad \psi\in V.
\]
\end{itemize}
\end{lemma}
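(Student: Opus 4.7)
The backbone of both statements is a closed-form identity showing that $\lm_n^\ld - \tau_n$ is quadratically small in $\gm_n$. On each isolating disc $U_n$ the product representations decompose as
\[
  \Dl^2(\lm)-4 = -\tfrac{4}{\pi_n^2}\bigl((\lm-\tau_n)^2 - \gm_n^2/4\bigr)\, R_n(\lm),\qquad \dDl(\lm) = \tfrac{2}{\pi_n}(\lm_n^\ld-\lm)\,Q_n(\lm),
\]
with non-local factors $R_n(\lm) = \prod_{k\neq n}(\lm_k^+-\lm)(\lm_k^--\lm)/\pi_k^2$ and $Q_n(\lm) = \prod_{k\neq n}(\lm_k^\ld-\lm)/\pi_k$ that are holomorphic and non-vanishing on $U_n$ and analytic in $\psi\in V_\ph$. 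Differentiating the first expression in $\lm$, equating with $2\Dl\dDl$, and evaluating at $\lm = \tau_n$ kills the term linear in $(\lm-\tau_n)$ and yields the identity
\[
  \lm_n^\ld - \tau_n = \frac{\gm_n^2\,R_n'(\tau_n)}{4\pi_n\,\Dl(\tau_n)\,Q_n(\tau_n)}. \qquad (\star)
\]

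For part (i), the right-hand side of $(\star)$ is manifestly the product of $\gm_n^2$ and an analytic factor that is bounded on some neighbourhood $V_n\subset V_\ph$ of $\ph$; indeed at $\ph$ the denominator is non-zero, since $\Dl(\tau_n(\ph))=\pm 2$ (a hallmark of a periodic eigenvalue) and $Q_n(\tau_n(\ph))\ne 0$ by the product structure on $U_n$. Hence $|\lm_n^\ld-\tau_n|\le C_n|\gm_n|^2$ on $V_n$, and since $\gm_n$ is continuous and vanishes at $\ph$, shrinking $V_n$ to enforce $C_n|\gm_n(\psi)|\le 1/2$ yields $|\lm_n^\ld-\tau_n|\le|\gm_n|/2$.

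For part (ii), the starting point is that at a real-type $\ph\in L^2_r$ all eigenvalues are real with $\lm_n^-\le\lm_n^\ld\le\lm_n^+$, so $|\lm_n^\ld(\ph)-\tau_n(\ph)|\le|\gm_n(\ph)|/2$ for every $n$ (trivially when $\gm_n(\ph)=0$). The factor-of-two slack in the target inequality is reserved for the complex perturbation. I would split the argument by frequency. For $|n|$ below some threshold (a finite set), take the intersection of continuity neighbourhoods --- invoking part (i) for those $n$ with $\gm_n(\ph)=0$ and using the strict inequality $|\lm_n^\ld(\ph)-\tau_n(\ph)|<|\gm_n(\ph)|$ otherwise. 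For $|n|$ above the threshold, use $(\star)$ together with a uniform-in-$n$ bound on its coefficient on a fixed complex neighbourhood $V_\ph$. Combined with the uniform gap bound $|\gm_n|\le\operatorname{diam}(U_n)=\pi/2$ on $V_\ph$, this yields $|\lm_n^\ld-\tau_n|\le C\,|\gm_n|^2\le|\gm_n|$ for all $|n|$ large enough.

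The main obstacle is the uniform-in-$n$ estimate of the coefficient in $(\star)$ on $V_\ph$. This amounts to controlling $R_n'(\tau_n)$, $Q_n(\tau_n)$ and $\Dl(\tau_n)$ simultaneously for all $n$ and all $\psi\in V_\ph$. I would obtain it by telescoping the infinite products $R_n$, $Q_n$ and their logarithmic derivatives against the zero-potential model using the asymptotics $\lm_k^\pm,\lm_k^\ld = k\pi+\ell_k^2$ and the isolating-neighbourhood property $|\lm-k\pi|\le\pi/4$; the uniform positivity of $|\Dl(\tau_n)|$ follows from the product formula for $\Dl^2-4$ at the gap midpoint. Once this asymptotic scaffolding is in place, both parts of the lemma are immediate consequences of $(\star)$.
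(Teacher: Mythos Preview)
Your approach is correct and genuinely different from the paper's, though the two share the same algebraic input. The paper also differentiates the factorisation $\Dl^2-4 = -4\pi_n^{-2}\bigl((\lm-\tau_n)^2-\gm_n^2/4\bigr)\Dl_n$, but evaluates at $\lm=\lm_n^{\ld}$ rather than at $\tau_n$; this produces an \emph{implicit} equation for $\lm_n^{\ld}$, and part~(i) is then finished by Rouch\'e's theorem on the disc $\{|\lm-\tau_n|\le|\gm_n|/2\}$, comparing $(\Dl^2-4)'$ against $2(\tau_n-\lm)\Dl_n$. Your evaluation at $\tau_n$ kills the other term and yields the closed formula $(\star)$, which makes the $O(\gm_n^2)$ behaviour explicit and replaces the Rouch\'e step by a simple coefficient bound. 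For part~(ii) the low-mode argument is the same in both proofs (continuity for open gaps, part~(i) for closed ones); for the high modes the paper invokes a ready-made estimate from \cite{Grebert:2014iq} of the form $\sum_{|m|\ge M}|\lm_m^{\ld}-\tau_m|^2/|\gm_m|^4\le 1$, whereas you propose to extract a uniform-in-$n$ bound on the coefficient of $(\star)$ directly. Your route is more self-contained, but be aware that the uniform bound requires exactly the standard infinite-product comparison with the zero-potential model that underlies the cited result (in particular, $|Q_n(\tau_n)|\sim|\pi_n|$ and $|R_n'(\tau_n)|=O(\pi_n^2)$, so that the $\pi_n$ in your denominator cancels); this is routine but is where the work lies.
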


\begin{proof}
(i) For any potential in $V_{n}=V_{\ph}$ we have
\[
  0 = \frac{1}{4}(\Dl^{2}-4)^{\ld}\bigg|_{\lm_{n}^{\ld}}
    = \left(2(\tau_{n}-\lm)\Dl_{n}
       - \left((\tau_{n}-\lm)^{2} 
           - {\gm_{n}^{2}}/{4}\right)\Dl_{n}^{\ld}\right)\bigg|_{\lm_{n}^{\ld}},
\]
where the function $\Dl_{n}$ is analytic on $\C\times W$ and given by
\[
  \Dl_{n}(\lm)
   \defl \frac{1}{\pi_{n}^{2}}\prod_{m\neq n} \frac{(\lm_{m}^{+}-\lm)(\lm_{m}^{-}-\lm)}{\pi_{m}^{2}}.
\]
The zeroes of $\Dl_{n}$ are precisely the eigenvalues $\lm_{m}^{\pm}$ for $m\neq n$, thus $\Dl_{n}$ does not vanish on $U_{n}\times V_{n}$ and we have $\abs{\Dl_{n}} \ge s > 0$ and $\abs{\Dl_{n}^{\ld}} \le r$ uniformly on $U_{n} \times V_{n}$. Since $\gm_{n}(\ph) = 0$, we may shrink $V_{n}$, if necessary, to the effect that
\[
  \abs{\gm_{n}(\psi)}r \le s,\qquad \psi\in V_{n}.
\]
To simplify notation put $f = 2(\tau_{n}-\lm)\Dl_{n}$ and $g = (\Dl^{2}-4)^{\ld}/4$. By Lemma~\ref{w-closed} $\gm_{n}(\psi) = 0$ implies $\lm_{n}^{\ld}(\psi) = \tau_{n}(\psi)$, hence we may assume $\gm_{n}(\psi) \neq 0$. In this case, we find on the boundary of the disc $D_{\psi}=\setd{\abs{\lm-\tau_{n}(\psi)}\le \abs{\gm_{n}(\psi)}/2}$,
\[
  \abs{f-g}_{\partial D_{\psi}}
   \le \abs{\gm_{n}(\psi)}^{2}r/2 < s\abs{\gm_{n}(\psi)}
   \le \abs{f}_{\partial D_{\psi}}.
\]
Thus, by Rouché's Theorem, $f$ and $g$ have the same number of roots in $D_{\psi}$, and since $\lm_{n}^{\ld}$ is the only root of $g$ contained in $U_{n}$, the claim (i) follows.

(ii) Recall from \cite{Grebert:2014iq} that for each $\ph\in L^{2}_{c}$ there exists a neighbourhood $V$ and an $M\ge 0$ such that
\[
  \sum_{\abs{m}\ge M} \abs{\lm_{m}^{\ld}-\tau_{m}}^{2}/\abs{\gm_{m}}^{4} \le 1,\qquad \psi\in V.
\]
After possibly increasing $M$ we have $\abs{\lm_{m}^{\ld}-\tau_{m}} \le \abs{\gm_{m}}$ on $V$ for all $\abs{m}\ge M$. Suppose that, in addition, $\ph\in L^{2}_{r}$. If $\gm_{m}(\ph) = 0$ for some $\abs{m} < M$, then we may shrink $V$ according to (i) to obtain the desired inequality. On the other hand, if $\gm_{m}(\ph) \neq 0$, then
\[
  \abs{\lm_{m}^{\ld}(\ph)-\tau_{m}(\ph)} \le \abs{\gm_{m}(\ph)}/2,
\]
and since $\lm_{m}^{\ld}$, $\tau_{m}$ and $\abs{\gm_{m}}$ are continuous on $W$, the desired inequality follows after possibly shrinking $V$.
\end{proof}

\begin{proposition}
\label{F-ana}
The mapping $F$ is analytic on $(\C\setminus\bigcup_{n\in\Z} U_{n})\times V_{\ph}$.
\end{proposition}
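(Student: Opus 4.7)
The approach is to decouple the dependence of $F$ on $\mu$ from the dependence on the eigenvalues $\lm_{0}^{\pm}(\psi)$ by splitting the integration at a fixed basepoint $p_{*}\in\partial U_{0}$. For $\mu\in\C\setminus\bigcup_{n}U_{n}$ I choose an admissible path $\Pi_{\mu}$ from $p_{*}$ to $\mu$ avoiding all $U_{n}$ and write
\[
F(\mu,\psi) = G(\psi) + \int_{\Pi_{\mu}}\om(\lm,\psi)\,\dlm,\qquad
G(\psi) \defl \frac{1}{2}\biggl(\int_{\lm_{0}^{-}(\psi)}^{p_{*}}+\int_{\lm_{0}^{+}(\psi)}^{p_{*}}\biggr)\om.
\]
Lemma~\ref{w-closed}(i), together with path independence inside $\C\setminus\bigcup_n U_n$ ensured by Lemma~\ref{w-closed}(iii), gives the joint analyticity of the $\Pi_{\mu}$-integral in $(\mu,\psi)$; so it suffices to show that $G$ is analytic on $V_{\ph}$. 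Concatenating an admissible path from $\lm_{0}^{-}$ to $\lm_{0}^{+}$ with one from $\lm_{0}^{+}$ to $p_{*}$, and invoking $\int_{\lm_{0}^{-}}^{\lm_{0}^{+}}\om = 0$ from Lemma~\ref{w-closed}(iii), one finds $\int_{\lm_{0}^{-}}^{p_{*}}\om = \int_{\lm_{0}^{+}}^{p_{*}}\om$, so that $G$ coincides with either individual integral and the lexicographic ordering of $\lm_{0}^{\pm}$ becomes irrelevant.

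On the open subset $\Omega = \setdef{\psi\in V_{\ph}}{\gm_{0}(\psi)\neq 0}$ the eigenvalues $\lm_{0}^{\pm}$ are simple roots of $\Dl^{2}-4$ in $U_{0}$, and the implicit function theorem allows us to choose them locally as analytic functions of $\psi$. The square-root singularity of $\om$ at the endpoint $\lm_{0}^{+}(\psi)$ is removed by the substitution $\lm = \lm_{0}^{+}(\psi)+s^{2}$: the transformed integrand is jointly analytic in $(s,\psi)$ near $s=0$, and integration against the analytically moving endpoint produces an analytic function on $\Omega$.

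The delicate case is $\psi_{0}\in V_{\ph}$ with $\gm_{0}(\psi_{0})=0$, where the two branch points collide. Here I would reduce to one-dimensional complex slices $\epsilon\mapsto \psi_{0}+\epsilon h$ and invoke the Riemann theorem on removable singularities applied to the codimension-one analytic set $\setdef{\psi}{\gm_{0}^{2}(\psi) = 0}$. Since $G$ is already analytic on $\Omega$, it is enough to establish local boundedness of $G$ near $\psi_{0}$. The decisive input is Lemma~\ref{ld-tau}, which yields $\abs{\lm_{0}^{\ld}-\tau_{0}}\le \abs{\gm_{0}}$ in a neighbourhood of $\psi_{0}$ and hence $\abs{\lm_{0}^{\ld}-\lm}\les\abs{\gm_{0}}+\abs{\lm-\lm_{0}^{+}}$ along the contour. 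Combined with a lower bound of the form $\abs{\vs_{0}(\lm)}\ge c\,\sqrt{\abs{\lm-\lm_{0}^{+}}\bigl(\abs{\lm-\lm_{0}^{+}}+\abs{\gm_{0}}\bigr)}$, and splitting the contour at radius $\abs{\gm_{0}}$ around $\lm_{0}^{+}$, the inner piece contributes $O(\abs{\gm_{0}})$ while the outer piece is bounded uniformly in $\gm_{0}$, yielding the required uniform bound.

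The main obstacle is precisely this uniform estimate near $\setdef{\psi}{\gm_0(\psi) = 0}$. Taken in isolation, the factor $1/\vs_{0}$ behaves like $1/\sqrt{\abs{\gm_{0}}\abs{\lm-\lm_{0}^{+}}}$ near a collapsing gap and would by itself produce a logarithmic divergence of the integral as $\gm_{0}\to 0$. The crucial compensation is furnished by the numerator $(\lm_{0}^{\ld}-\lm)$: Lemma~\ref{ld-tau} pins this factor to the same order $O(\abs{\gm_{0}})$ near the gap, turning what would otherwise be a singular integral into a bounded one. This is the entire content that is needed beyond what was developed earlier in the paper, and once boundedness is in place the Riemann extension completes the proof.
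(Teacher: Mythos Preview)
Your outline matches the paper's proof closely: both split $F$ at a fixed point on $\partial U_{0}$, reduce to analyticity of the boundary value $G$, handle the open set $\{\gm_{0}\neq 0\}$ by parametrising the path from a locally analytic eigenvalue, and use Lemma~\ref{ld-tau} as the key input near $Z_{0}=\{\gm_{0}^{2}=0\}$. The difference is in how the extension across $Z_{0}$ is effected. The paper proves continuity on all of $V_{\ph}$ together with weak analyticity of the explicit restriction $G\big|_{Z_{0}}=-\ii\int_{\tau_{0}}^{p_{*}}\chi_{0}\,\dlm$, and then appeals to a general extension principle. You instead argue only local boundedness and invoke the Riemann extension theorem. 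This is a legitimate and slightly more economical route, since boundedness is weaker than continuity; the paper's explicit identification of $G\big|_{Z_{0}}$ is not needed for your argument.

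Two points where your sketch is thinner than it looks. First, the lower bound $\abs{\vs_{0}(\lm)}\ge c\sqrt{\abs{\lm-\lm_{0}^{+}}(\abs{\lm-\lm_{0}^{+}}+\abs{\gm_{0}})}$ is \emph{not} automatic along an arbitrary admissible contour: since $\abs{\vs_{0}(\lm)}^{2}=\abs{\lm-\lm_{0}^{+}}\abs{\lm-\lm_{0}^{-}}$ and $\lm-\lm_{0}^{-}=(\lm-\lm_{0}^{+})+\gm_{0}$, one needs the contour direction chosen relative to $\gm_{0}$ to prevent cancellation. The paper handles this by tying the basepoint $\nu$ to $\gm_{0}(\psi^{*})$ via $\nu=\rho_{1}-\sg\tilde\gm_{0}/2$ with $\Re\sg>0$, which forces $\abs{\tau_{0}-\lm_{t}}>\abs{\gm_{0}}/2$ along the straight segment; you should indicate an analogous choice. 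Second, reducing Riemann extension to one-dimensional slices $\ep\mapsto\psi_{0}+\ep h$ fails if the slice lies entirely in $Z_{0}$; in infinite dimensions one should either pass through two-dimensional slices not contained in $Z_{0}$, or---as the paper does---establish analyticity on $Z_{0}$ directly from the collapsed-gap formula.
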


\begin{proof}
Take any $\nu\in \partial U_{0}$, and write the integral as
\[
  F(\mu) = \frac{1}{2}\left(\int_{\lm_{0}^{-}}^{\nu} \om + \int_{\lm_{0}^{+}}^{\nu}\om \right) + \int_{\nu}^{\mu} \om,
\]
where the admissible path of integration of $\int_{\lm_{0}^{\pm}}^{\nu}\om$ runs in $U_{0}\setminus[\lm_{0}^{-},\lm_{0}^{+}]$ except for its end points, and the one of $\int_{\nu}^{\mu} \om$ runs in $\C\setminus\bigcup_{n\in\Z} U_{n}$. Then, by Lemma~\ref{w-closed}, $\int_{\nu}^{\mu}\om$ is analytic on $(\C\setminus\bigcup_{n\in\Z} U_{n})\times V_{\ph}$. We now want to prove that $F_{\nu} \defl F(\nu)$ is analytic on $V_{\ph}$ for an arbitrary $\nu\in \partial U_{0}$.

Denote by $Z_{0} \defl \setdef{\psi\in V_{\ph}}{\gm_{0}^{2}(\psi) = 0}$ the analytic subvariety of $V_{\ph}$ of potentials with a collapsed zeroth gap. We first prove analyticity on the open set $V_{\ph}\setminus Z_{0}$, secondly continuity on all of $V_{\ph}$, and finally weak analyticity on $Z_{0}$. The analyticity on $V_{\ph}$ then follows by the general theory of analytic functions -- see \cite[Appendix A]{Kappeler:2003up}. Note that by the argument above, we may move $\nu$ on $\partial U_{0}$ after each of the three steps.

\textit{Analyticity on $V_{\ph}\setminus Z_{0}$}. Fix any $\psi^{*}$ in $V_{\ph}\setminus Z_{0}$. Since the eigenvalues $\lm_{0}^{\pm}(\psi^{*})$ are simple, there exist two functions $\rho_{1}$ and $\rho_{2}$, which are analytic on some neighbourhood $V\subset V_{\ph}\setminus Z_{0}$ of $\psi^{*}$ and satisfy the set equality $\setd{\rho_{1},\rho_{2}} = \setd{\lm_{0}^{-},\lm_{0}^{+}}$. Provided the straight line $[\rho_{1},\nu]$ is an admissible path, we can use the parametrisation
\[
  \lm_{t} = \rho_{1} + tz,\qquad z = \nu-\rho_{1},
\]
together with the product representation \eqref{om} of $\om$ to get
\begin{align}
  \label{F-int-rep}
  F_{\nu}
   = \int_{\rho_{1}}^{\nu} \om
   = -\ii\int_{0}^{1} 
     \frac{\lm_{0}^{\ld}-\lm_{t}}{\vs_{0}(\lm_{t})}
     \chi_{0}(\lm_{t})z\,\dt,
     \qquad \chi_{0}(\lm) = \prod_{m\neq 0} \frac{\lm_{m}^{\ld}-\lm}{\vs_{m}(\lm)}.
\end{align}
Note that $\chi_{0}$ is analytic on $U_{0}\times V_{\ph}$ -- see \cite[Section 12]{Grebert:2014iq}. Let $\tilde\gm_{0} = \rho_{2}-\rho_{1}$ and fix $\nu$ on $\partial U_{0}$ such that for some real $\sg(\psi^{*}) > 0$,
\[
  \nu = \rho_{1}(\psi^{*}) - \sg(\psi^{*})\tilde\gm_{0}(\psi^{*})/2.
\]
After possibly shrinking $V$, we find for the same fixed $\nu$
\[
  \nu = \rho_{1}(\psi) - \sg(\psi)\tilde\gm_{0}(\psi)/2,\qquad \psi\in V,
\]
with $\sg(\psi)$ possibly complex though $\Re \sg \ge \ep > 0$ on $V$. As $z = -\sg\tilde\gm_{0}/2$, we conclude
\[
  \abs{\tau_{0}-\lm_{t}} = \abs{1+t\sg}\cdot\abs{\gm_{0}/2} > \abs{\gm_{0}/2},\qquad 0 < t \le 1,
\]
uniformly on $V$. Therefore, the path $[\rho_{1},\nu]$ is admissible for all $\psi$ in $V$. In view of \eqref{s-root}, the root $\vs_{0}(\lm_{t})$ is continuous in $t \ge 0$, analytic on $V$ for $t > 0$, and satisfies the lower bound
\[
  \abs{\vs_{0}(\lm_{t})}
   \ge \abs{\gm_{0}/2} \sqrt[+]{(1+t\ep)^{2}-1}
   \ge \abs{\gm_{0}/2} \sqrt[+]{2t\ep}.
\]
It follows that the integrand $(\lm_{0}^{\ld}-\lm_{t})\chi_{0}(\lm_{t})z/\vs_{0}(\lm_{t})$ of \eqref{F-int-rep} is continuous on $(0,1]\times V$, analytic on $V$ for fixed $t>0$, and has an integrable majorant. In consequence, $F_{\nu}$ is analytic on $V$, and hence on all of $V_{\ph}\setminus Z_{0}$.

\textit{Continuity on $V_{\ph}$}. Clearly, $F_{\nu}$ is continuous on $V_{\ph}\setminus Z_{0}$, and its restriction
\[
  F_{\nu}\Big|_{Z_{0}} = -\ii\int_{\tau_{0}}^{\nu} \chi_{0}(\lm)\,\dlm,
\]
is continuous, too, as $\chi_{0}$ and $\tau_{0}$ are. To establish the continuity of $F_{\nu}$ on $V_{\ph}$ it thus suffices to show that $F_{\nu}(\psi^{k})\to F_{\nu}(\psi^{*})$ for every sequence $(\psi^{k})$ in $V_{\ph}\setminus Z_{0}$ converging to some $\psi^{*}$ in $Z_{0}$.

In view of Lemma~\ref{ld-tau}, we may without loss assume $\abs{\lm_{0}^{\ld}-\tau_{0}} \le \abs{\gm_{0}}/2$ for all $\psi^{k}$. Substituting $\lm_{t} = \tau_{0} - t\gm_{0}/2$ such that $\vs_{0}(\lm_{t})^{2} = -(1-t^{2})\gm_{0}^{2}/4$ gives
\[
  \abs*{\int_{\lm_{0}^{-}}^{\tau_{0}} \frac{\lm_{0}^{\ld}-\lm}{\ii \vs_{0}(\lm)}\chi_{0}(\lm)\,\dlm}
   \le
   \int_{0}^{1} \abs*{\frac{\lm_{0}^{\ld} - \tau_{0} + t\gm_{0}/2}{\sqrt[+]{1-t^{2}}}\chi_{0}(\lm_{t})}\,\dt
   = O(\abs{\gm_{0}}),
\]
where the implicit constant can be chosen uniformly in $k$. Therefore,
\[
  F_{\nu}(\psi^{k}) = -\ii\int_{\tau_{0}}^{\nu} \frac{\lm_{0}^{\ld}-\lm}{\vs_{0}(\lm)}\chi_{0}(\lm)\,\dlm\bigg|_{\psi^{k}} + o(1).
\]
We can choose $\nu^{*}$ on $\partial U_{0}$ such that the straight line $[\tau_{0},\nu^{*}]$ is admissible for any $\psi^{k}$. With the parametrisation
\[
  \lm_{t} = \tau_{0} + tz,\qquad z = \nu^{*}-\tau_{0},
\]
we then obtain
\[
  -\ii\int_{\tau_{0}}^{\nu^{*}} \frac{\lm_{0}^{\ld}-\lm}{\vs_{0}(\lm)}\chi_{0}(\lm)\,\dlm
   = -\ii\int_{0}^{1} \frac{\lm_{0}^{\ld}-\lm_{t}}{\vs_{0}(\lm_{t})}\chi_{0}(\lm_{t})z\,\dt.
\]
Since $\abs{z}$ is uniformly bounded below on $V$,
\[
  \frac{\abs{\gm_{0}/2}}{\abs{\tau_{0}-\lm_{t}}} = O(\abs{\gm_{0}/t}).
\]
In view of \eqref{s-root} for any $\dl > 0$ there exists a neighbourhood $V_{\dl}\subset V$ of $\psi^{*}$ such that the root $\vs_{0}(\lm_{t})$ is continuous and does not vanish on $[\dl,1]\times V_{\dl}$. Consequently, for all $\dl > 0$,
\[
  -\ii\int_{\dl}^{1}\frac{\lm_{0}^{\ld}-\lm_{t}}{\vs_{0}(\lm_{t})}\chi_{0}(\lm_{t})z\bigg|_{\psi^{k}}
  \to
  -\ii\int_{\dl}^{1}\chi_{0}(\lm_{t})z\bigg|_{\psi^{*}},\qquad k\to \infty.
\]
Let $\ep = \abs{\gm_{0}}/2\dl\abs{z}$, then after possibly shrinking $V_{\dl}$ we have $0 < \ep < 1$ and
\begin{align*}
  \int_{0}^{\dl} \abs*{\frac{\lm_{0}^{\ld}-\lm_{t}}{\vs_{0}(\lm_{t})} \chi_{0}(\lm_{t}) z} \,\dt
  &\le \int_{0}^{\dl}
  		\frac{t\abs{z} + \abs{\gm_{0}/2}}{\sqrt{\abs{t^{2}\abs{z}^{2}-\abs{\gm_{0}/2}^{2}}}}
		\abs*{\chi_{0}(\lm_{t})z}\,\dt\\
  &= O\left(\dl \int_{0}^{1} \frac{\sqrt[+]{\abs{t+\ep}}}{\sqrt[+]{\abs{t-\ep}}}\,\dt\right),
\end{align*}
where the implicit constant is uniform on $V_{\dl}$. One checks that the latter integral is uniformly bounded for $0\le \ep \le 1$, hence
\[
  \sup_{\psi\in V_{\dl}}
  \int_{0}^{\dl} 
  \abs*{\frac{\lm_{0}^{\ld} - \lm_{t}}{\vs_{0}(\lm_{t})}\chi_{0}(\lm_{t})z}\,\dt
  = O(\dl).
\]
It follows that
\[
  -\ii\int_{0}^{1}\frac{\lm_{0}^{\ld}-\lm_{t}}{\vs_{0}(\lm_{t})}\chi_{0}(\lm_{t})z\bigg|_{\psi^{k}}
  \to
  -\ii\int_{0}^{1}\chi_{0}(\lm_{t})z\bigg|_{\psi^{*}},\qquad k\to \infty,
\]
so $F_{\nu^{*}}$ is continuous at $\psi^{*}$ and hence on all of $V_{\ph}$.

\textit{Weak analyticity.} The restriction of $F_{\nu}$ to $Z_{0}$ is given by
\[
  F_{\nu}\bigg|_{Z_{0}} = -\ii\int_{\tau_{0}}^{\nu} \chi_{0}(\lm)\,\dlm,
\]
and since $\chi_{0}$ and $\tau_{0}$ are both analytic, the weak analyticity follows directly. This completes the proof of the analyticity of $F_{\nu}$ on $V_{\ph}$.
\end{proof}

\section{Properties of the \nls hierarchy}
\label{s:hamil}

In this appendix we recall some well known facts about the Hamiltonians of the \nls-hierarchy -- see \cite{Grebert:2014iq}. For $\ph=(\phm,\php)\in H_{c}^{k-1}$ the $k$th \nls Hamiltonian is given by,
\[
  \Hc_{k}(\ph) = \int_\T \ph_{-} u_k(\phm,\php,\ldots,\phm^{(k-1)},\php^{(k-1)})\,\dx,\qquad k\ge 1,
\]
where $u_1 = -\php$, and
\[
  u_{k+1} = u_k' + \phm \sum\nolimits_{l=1}^{k-1}u_{k-l}u_l,\qquad k\ge 1.
\]

\begin{lemma}
If $\ph\in H_{c}^{k-1}$, then
\[
  u_{k+1} = -\php^{(k)} + q_k(\phm,\php,\ldots,\phm^{(k-2)},\php^{(k-2)}),
\]
where $q_k$ is a homogeneous polynomial of degree $k+1$ when $\phm$, $\php$, and $\partial_x$ each count as one degree. Moreover, each term of $\phm q_k$ has at most degree $k-2$ with respect to $\partial_x$, and the degree with respect to $\phm$ equals the one with respect to $\php$.~
\end{lemma}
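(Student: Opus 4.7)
The plan is to prove the lemma by strong induction on $k$, verifying the three assertions (the form $u_{k+1} = -\php^{(k)} + q_k$, the homogeneity degree of $q_k$, and the two structural properties of $\phm q_k$) simultaneously. It is convenient to track, as an auxiliary invariant on $u_j$ itself, that each monomial of $u_j$ has degree in $\php$ (and its derivatives) exceeding its degree in $\phm$ (and its derivatives) by exactly one; this propagates cleanly through the recursion.

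The base case $k=1$ gives $u_2 = u_1' = -\php'$, so $q_1 \equiv 0$ and all claims hold vacuously. For the inductive step, assume that for every $j\le k$ one has $u_j = -\php^{(j-1)} + q_{j-1}$, where $q_{j-1}$ is homogeneous of degree $j$, involves only $\phm^{(i)}, \php^{(i)}$ with $0\le i\le j-3$, every monomial of $\phm q_{j-1}$ has $\partial_x$-degree at most $j-3$, and every monomial of $u_j$ satisfies $d_+(\mathfrak{m}) - d_-(\mathfrak{m}) = 1$ (where $d_\pm$ count $\php$-factors and $\phm$-factors with their derivatives). Applying the recursion,
\[
  u_{k+1} = u_k' + \phm\sum_{l=1}^{k-1} u_{k-l}u_l
         = -\php^{(k)} + \underbrace{q_{k-1}' + \phm\sum_{l=1}^{k-1} u_{k-l}u_l}_{\eqdef q_k}.
\]

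First I would check homogeneity: $q_{k-1}'$ has degree $(k) + 1 = k+1$, and each summand $\phm\, u_{k-l} u_l$ has degree $1 + (k-l) + l = k+1$, so $q_k$ is homogeneous of degree $k+1$. Next, for the $\partial_x$-degree, each monomial of $q_{k-1}$ has $\partial_x$-degree $\le k-3$ by the inductive hypothesis, hence each monomial of $q_{k-1}'$ has $\partial_x$-degree $\le k-2$; and each monomial of $u_{k-l}$ (resp.\ $u_l$) has $\partial_x$-degree at most $k-l-1$ (resp.\ $l-1$), so every monomial of $u_{k-l}u_l$ has $\partial_x$-degree $\le k-2$. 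Since multiplication by $\phm$ does not alter $\partial_x$-degree, each monomial of $\phm q_k$ has $\partial_x$-degree at most $k-2$, which also forces only derivatives up to order $k-2$ to appear. Finally, the $d_+$--$d_-$ invariant propagates: differentiation is neutral, so every monomial of $q_{k-1}'$ inherits $d_+ - d_- = 1$; and if monomials of $u_{k-l}$ and $u_l$ each have $d_+ - d_- = 1$, then their product has $d_+ - d_- = 2$, and multiplying by $\phm$ restores the difference to $1$. Combined with $d_+(-\php^{(k)}) - d_-(-\php^{(k)}) = 1$, this shows $u_{k+1}$ still satisfies the invariant, and therefore each monomial of $\phm q_k$ has $d_+ = d_-$, i.e. the degrees in $\phm$ and $\php$ coincide.

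The proof is essentially a bookkeeping exercise, so I do not expect any deep obstacle; the one place requiring care is the $\partial_x$-degree count in the bilinear sum $\phm u_{k-l}u_l$, where one has to use that neither factor hits the highest admissible derivative $\php^{(k-1)}$ — this is exactly why the differentiated leading term $-\php^{(k)}$ of $u_k'$ remains the unique top-order contribution and is not cancelled or duplicated by the nonlinear sum.
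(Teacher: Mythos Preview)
Your induction is correct and follows the same line as the paper's proof, just spelled out in more detail. The only cosmetic difference is that the paper encodes your auxiliary invariant $d_+(\mathfrak{m}) - d_-(\mathfrak{m}) = 1$ on $u_k$ as the scaling identity $u_k(\phm,\lambda\php) = \lambda\, u_k(\lambda\phm,\php)$, which is an equivalent (and slightly slicker) way of saying the same thing.
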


\begin{proof}
As is evident from their definition, the polynomials $u_k$ are homogeneous of degree $k$, and only contain derivatives of $\phm$ and $\php$ up to order $k-1$. Furthermore, $u_k(\phm,\lm\php) = \lm u_k(\lm\phm,\php)$ for all $\lm\in\C$, which completes the proof.
\end{proof}

In case of a smooth real type potential, that is $\ph = (\psi,\ob\psi)$, the odd Hamiltonians have the form
\[
  (-1)^{m+1}\Hc_{2m+1}(\ph)
   = 
  \int_\T \left(\abs{\psi^{(m)}}^2
   + \psi q_{2m}\right)\,\dx,\qquad m\ge1,
\]
where $q_{2m}$ depends on $\psi$, $\ob\psi$, and their derivatives up to order $2m-2$. Suppose  $\psi q_{2m}$ contains a monomial $\psi_{(m+n)}\mathfrak{q}(\psi,\ob\psi,\ldots,\psi_{(2m-2)},\ob\psi_{(2m-2)})$ with $n\ge 0$. Since this monomial has at most degree $2m-2$ with respect to $\partial_{x}$, it follows that $\mathfrak{q}$ contains at most $m-2-n$ derivatives. Thus we can integrate by parts to the effect that each factor of the monomial contains at most $m-1$ derivatives.

\begin{corollary}
\label{h-form}
For any $m\ge 1$ there exists a polynomial $p_{2m}$ such that
\[
  (-1)^{m+1}\Hc_{2m+1}(\ph)
   = 
  \int_\T \paren*{\abs{\psi_{(m)}}^2 + p_{2m}(\psi,\ob\psi,\ldots,\psi_{(m-1)},\ob \psi_{(m-1)})}\,\dx,
\]
for all $\ph=(\psi,\ob\psi)$ in $H_{r}^{m}$. The polynomial $p_{2m}$ is homogenous of degree $2m+2$ with respect to $\psi$, $\ob\psi$, and $\partial_x$, and the degree of each term of $p_{2m}$ with respect to $\psi$ equals the one with respect to $\ob\psi$.
\end{corollary}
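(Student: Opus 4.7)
The plan is to specialize the Lemma preceding the corollary to $k=2m$ and then carry out integration by parts on the circle to collect the non-trivial highest derivative term and reduce all lower-order terms to involve only derivatives of order at most $m-1$.

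First I would start from the definition $\Hc_{2m+1}(\ph) = \int_\T \psi\, u_{2m+1}\,\dx$ with $\ph=(\psi,\ob\psi)$, and apply the Lemma to split
\[
  u_{2m+1} = -\ob\psi^{(2m)} + q_{2m}(\psi,\ob\psi,\dotsc,\psi^{(2m-2)},\ob\psi^{(2m-2)}),
\]
where $\psi q_{2m}$ is homogeneous of degree $2m+2$ in $\psi,\ob\psi,\partial_x$, contains at most $2m-2$ derivatives in each monomial, and has equal degree in $\psi$ and $\ob\psi$. Integrating by parts $m$ times on the torus, and using that boundary contributions vanish, converts the leading term:
\[
  \int_{\T} \psi\,(-\ob\psi^{(2m)})\,\dx = -(-1)^{m}\int_\T \psi^{(m)}\ob\psi^{(m)}\,\dx = (-1)^{m+1}\int_\T \abs{\psi^{(m)}}^{2}\,\dx.
\]
Multiplying the trace identity by $(-1)^{m+1}$ therefore yields the claimed $|\psi^{(m)}|^{2}$ term plus $(-1)^{m+1}\int \psi q_{2m}\,\dx$, and the task reduces to rewriting the latter integral as the integral of a polynomial $p_{2m}$ in $\psi,\ob\psi,\dotsc,\psi^{(m-1)},\ob\psi^{(m-1)}$ alone.

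The core combinatorial step is the derivative reduction. Consider any monomial $\mathfrak{m}$ of $\psi q_{2m}$ and suppose it contains a factor of the form $\psi^{(j)}$ or $\ob\psi^{(j)}$ with $j\ge m$. Since the total order of derivatives in $\mathfrak{m}$ is at most $2m-2$, all remaining factors together carry at most $2m-2-j\le m-2$ derivatives, so individually each is of order at most $m-2$. Hence I can repeatedly integrate by parts, transferring one derivative at a time off the high-order factor onto the product of the remaining factors (producing, by Leibniz, a finite sum of monomials where one additional derivative lands on exactly one of those other factors). Since each recipient had order $\le m-2$ before, it has order $\le m-1$ afterwards, and after at most $j-(m-1)$ such reductions all factors are of order $\le m-1$. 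Repeating this procedure until no factor of order $\ge m$ survives gives a polynomial $p_{2m}$ depending only on $\psi,\ob\psi,\dotsc,\psi^{(m-1)},\ob\psi^{(m-1)}$ with $\int_\T p_{2m}\,\dx = (-1)^{m+1}\int_\T \psi q_{2m}\,\dx$.

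The qualitative conclusions are preserved throughout: integration by parts neither alters the total homogeneous degree (when $\psi,\ob\psi,\partial_x$ each count as one) nor the individual degrees in $\psi$ and $\ob\psi$ separately, so $p_{2m}$ inherits from $\psi q_{2m}$ the property of being homogeneous of degree $2m+2$ with equal degree in $\psi$ and $\ob\psi$. The only genuine subtlety is bookkeeping in the integration-by-parts redistribution --- ensuring that at no intermediate step a factor exceeds order $m-1$ --- but this is guaranteed by the cushion $m-2$ on the other factors, which I anticipate to be the most delicate point to write out carefully. Everything else is routine manipulation of the recursion for $u_k$.
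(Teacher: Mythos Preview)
Your proposal is correct and follows essentially the same approach as the paper: apply the preceding Lemma with $k=2m$, integrate by parts to extract $\abs{\psi^{(m)}}^2$ from the leading term, and then reduce the order of any factor of order $\ge m$ in $\psi q_{2m}$ by further integration by parts, using that the remaining factors together carry at most $m-2-n$ derivatives. Your write-up is in fact more explicit than the paper's, which simply asserts that one ``can integrate by parts to the effect that each factor of the monomial contains at most $m-1$ derivatives''; your observation that the cushion on the other factors (individually of order $\le m-2$ whenever a bad factor of order $\ge m$ is present, since the total is $\le 2m-2$) prevents the creation of a new high-order factor at any intermediate step is exactly the bookkeeping needed to make that assertion rigorous.
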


\bibliography{bibliography}

\bigskip
\footnotesize

\textit{MSC 2010:} 35Q55, secondary 35P05, 34L40.
\par
\textit{Keywords:} defocusing NLS, Birkhoff normal form, periodic Dirac operators.

\bigskip

\textit{Institut für Mathematik}
\par
\textit{Universität Zürich, Winterthurerstrasse 190, CH-8057 Zürich}
\par
\textit{jan.molnar@math.uzh.ch}

\end{document}